\numberwithin{equation}{section}
\numberwithin{figure}{section}
\theoremstyle{plain}
\newtheorem{thm}{\protect\theoremname}
  \theoremstyle{plain}
  \newtheorem{cor}[thm]{\protect\corollaryname}
  \theoremstyle{plain}
  \newtheorem{lem}[thm]{\protect\lemmaname}
  \theoremstyle{plain}
  \newtheorem{prop}[thm]{\protect\propositionname}
  \theoremstyle{definition}
  \newtheorem{defn}[thm]{\protect\definitionname}
  \theoremstyle{plain}
  \theoremstyle{remark}
  \newtheorem{rem}[thm]{\protect\remarkname}
\newtheorem{exam}[thm]{\protect\examplename}
\newtheorem{notat}[thm]{\protect\notationname}
\providecommand{\conjecturename}{Conjecture}
  \providecommand{\corollaryname}{Corollary}
  \providecommand{\definitionname}{Definition}
  \providecommand{\lemmaname}{Lemma}
  \providecommand{\propositionname}{Proposition}
  \providecommand{\remarkname}{Remark}
\providecommand{\examplename}{Example}
\providecommand{\theoremname}{Theorem}
\providecommand{\notationname}{Notation}
\DeclareMathOperator{\inv}{Inv}
\DeclareMathOperator{\cl}{cl}
\DeclareMathOperator{\id}{id}
\DeclareMathOperator{\dd}{d}
\DeclareMathOperator{\Ima}{Im}
\DeclareMathOperator{\supp}{supp}
\DeclareMathOperator{\diam}{diam}
\title{A Discontinuous Differential Calculus in the Framework Colombeau's
Full Algebra}
\author{Wagner Cortes\thanks{Instituto de Matemática, Universidade
   Federal do Rio Grande do Sul, Porto Alegre-RS, Brazil, Av. Bento
   Gonçalves, 9500, 91509-900, E-mail: {\it{wcortes@gmail.com}}.} \and 
Antonio R. G. Garcia\thanks{Centro de Ciências Exatas e Naturais,
 Universidade Federal Rural do Semi-Árirido, Mossoró-RN, Brazil,
  Av. Francisco Mota, 572, 59625-900, E-mail: {\it{ronaldogarcia@ufersa.edu.br}}.} 
  \and 
Severino H. da Silva\thanks{Unidade Acadêmica de Matemática,
 Universidade Federal de Campina Grande, Campina Grande-PB, Brazil,
  Av. Aprígio Veloso, 785, 58429-970, E-mail: {\it{horacio@mat.ufcg.edu.br}}.}}
\begin{document}
\maketitle

\begin{abstract}
Starting from the Colombeau's full generalized functions, the sharp
topologies and the notion of generalized points, we introduce a new
kind differential calculus (for functions between totally disconnected
spaces). We study generalized pointvalues, Colombeau's differential
algebra, holomorphic and analytic functions. We show that   
the Embedding Theorem and the Open Mapping Theorem hold 
in this framework. Moreover, we study some applications in
differential equations.  
\end{abstract}

\begin{keywords}
Colombeau's full algebra, Nonlinear generalized functions, Pointvalues
of a nonlinear generalized functions, Sharp topology and Differential equations. 
\end{keywords}

\begin{subjclass}
46F30; 46T20; 26E30; 30G06.
\end{subjclass}

\section{Introduction}\label{sec-1}
The theory of Colombeau generalized functions appears in the 
1980's which the main aim was to define a product on the
distribution space, see \cite{JFC3,JFC1,JFC2,JFC4} and \cite{MK} 
for more details. Colombeau constructed differential algebras
$\mathcal{G}(\Omega),~\Omega\subseteq\mathbb{R}^n$ containing the
space of smooth functions defined on $\Omega$ as a subalgebra and the
space of distributions on $\Omega$ as a subspace, i.e, it was constructed an 
associative, commutative differential algebra containing the space of
distributions and hold the Leibniz rule for differential product of
two distributions, where nowadays this algebra is known as Colombeau's 
algebras. So far, Colombeau algebras are the only known differential
algebras having all these properties enumerated above.

Fundamental investigations about the structure of these algebras 
containing the distributions space have been carried out
by Rosinger, see \cite{EER1,EER2,EER3} and \cite{EER4} for more
details. Moreover, with the results of \cite{EER1,EER2,EER3} and \cite{EER4}
were constructed a general theory which characterized algebras of generalized
functions containing the space of distributions, but was Colombeau
that constructed differential algebras with good properties (see
\cite{JFC1} and \cite{JFC2}). The Colombeau algebras, simplified and
full, rapidly developed during the last years and was found useful 
applications to linear and nonlinear partial differential equations, 
calculus of variations, mathematical physics problem, stochastic
analysis and in differential geometry, where the theory of distributions have
limitations of applications, because it is not nonlinear. In the presence
of nonlinearity and the nonlinear generalized functions (Colombeau
algebras), it produces new insight where the classical theory does not.  

Our idea is to extend some results of \cite{OJR}, where the authors 
developed the discontinuous calculus for Colombeaus's simplified generalized functions.  
The definitions and topology of Colombeau's full generalized numbers 
are far more complicated than the simplified version which makes an interesting
object to introduce the differential calculus. It is important to say
that Colombeau algebras is not just a regularization of functions,
but an extension of the classical functions with the differential
calculus developed here.

The starting point of an algebraic theory of the topological ring of 
Colombeau's full generalized numbers was made in \cite{JRJ}. 
This algebraic theory, together with the theory of the point values 
of a Colombeau's full generalized functions that will be developed
here, which is based with the constructions 
made in \cite{MK}, are the base to introduce the differential
calculus in the framework of Colombeau's full generalized functions.

This paper is the continuation of a program whose aim is the
development of a differential calculus in the Colombeau algebras
setting. This program was effectively started in \cite{OJR} and
\cite{OJRO}, where the main point was to develope the differential 
calculus. Thus, the Colombeau's simplified theory is a natural
extension of classical calculus. We want to extend these studies for 
the Colombeau's full theory, we want to show that the differential
calculus in the framework of Colombeau's full theory is a natural 
extension of classical calculus, too. As a future work, we want to  
study integration of generalized functions over membranes in the
context of Colombeau's full generalized numbers
where it extends the ideas presented in \cite{OJRO}. 

This work is organized as follows: In the Section
\ref{sec-init}, we collect some basic definitions, results
and notations to be used in the sequel of the paper and, as a rule,
most of the proofs are omitted. In the Section \ref{sec-2}, we present
some results that are extensions of results obtained in \cite{OJR},
observing that the  results obtained in \cite{JRJ} was assumed that the
support of the mollifiers are contained in the ball of center in $x_0$
and radio $1$. In fact, we consider $\diam(\supp(\varphi))=1$ 
for $\varphi\in\mathcal{A}_0(\mathbb{K})$. In this case,
we have that its support number, $\dd(\varphi)=\sup\{|x|:\varphi(x)\ne
0\}=\varepsilon$, see \cite{NPS} for more details. In the Section 
\ref{sec-3}, we introduce the pointvalues  
in the framework Colombeau's full algebras and we  extend  
some results in \cite{MK}. In the Section \ref{sec-4}, we study the Colombeau
differential full algebra over the image of $\mathcal{G}(\Omega)$ 
by the operator $\kappa$ that we  define in  this section. In the Section  
\ref{sec-5}, we study the holomorphic and analytic generalized
function and some applications in the framework Colombeau's full
generalized functions. 

\section{Definitions, results and notations}\label{sec-init}

In this section we recall some basic definitions, results and notations that will be necessary 
to  the development of this work. As a rule, the proofs will be  ommited.

\begin{notat}\label{nota}
\begin{enumerate}
\item[$a)$] $I\coloneqq ]0,1],~\bar{I}\coloneqq [0,1]$ and
  $I_\eta\coloneqq ]0,\eta[,~\forall~\eta\in I$.
\item[$b)$] $A\setminus B\coloneqq\{a\in A:a\notin B\}$.
\item[$c)$] $\mathbb{Q}$ denotes the field of rational numbers.
\item[$d$)] $\mathbb{K}$ denotes either the field of real or complex
  numbers, i.e., $\mathbb{R}$ or $\mathbb{C}$.
\item[$e)$] $\mathbb{K}^*\coloneqq\mathbb{K}\setminus\{0\}$.
\item[$f)$] $\mathbb{N}$ and $\mathbb{Z}$ stand respectively for the
  set natural numbers and the set of
  integers. Moreover, $\mathbb{N}^*\coloneqq\mathbb{N}\setminus\{0\}$ and
  $\mathbb{Z}^*\coloneqq\mathbb{Z}\setminus\{0\}$.
\item[$g)$] $\mathbb{R}_+\coloneqq\{x\in\mathbb{R}:x\ge 0\}$ and
  $\mathbb{R}_+^*\coloneqq\{x\in\mathbb{R}:x>0\}$.
\item[$h)$] We denote  $\overline{\mathbb{K}}_s$ as the topological ring of
  Colombeau's simplified generalized numbers, see \cite{JO}.
\item[$i)$]
  $\mathcal{A}_0(\mathbb{R})\coloneqq\{\varphi\in\mathcal{D}(\mathbb{R}):\intop_0^\infty\varphi(x)
  \dd
  x=\frac{1}{2},~\varphi ~\mbox{is even and}~\varphi\equiv\mbox{const.
    in}~V_0\}$, where $V_0$ is a neighborhood of the origin. If
  $\varphi\in\mathcal{A}_0(\mathbb{R})$, then its support number is
  $\dd(\varphi)\coloneqq\sup\{|x|:\varphi(x)\ne 0\}$, see \cite{NPS}.
\item[$j)$]
  $\mathcal{A}_q(\mathbb{R})\coloneqq\{\varphi\in\mathcal{A}_0(\mathbb{R}):\intop_0^\infty
  x^{\frac{j}{m}}\varphi(x)\dd x=0,~\mbox{for}~1\le j,m\le
  q,~q\in\mathbb{N}\}$. If
  $\varphi\in\mathcal{A}_q(\mathbb{R}),~q\in\mathbb{N}$, then for
  every
  $\varepsilon>0,~\varphi_\varepsilon(x)=\varepsilon^{-n}\varphi(\frac{x}{\varepsilon}),~x\in\mathbb{R}^n$,
  belongs to $\mathcal{A}_q(\mathbb{R})$, see \cite{NPS}. 
\item[$l$)]
  $\Gamma\coloneqq\{\gamma:\mathbb{N}\to\mathbb{R}_+:\gamma(n)<\gamma(n+1),~
\forall~n\in\mathbb{N}~\mbox{and}~\lim\limits_{n\to\infty}\gamma(n)=\infty\}$
  is the set of the strict increasing sequences diverging to infinity
  when $n\to\infty$.
\item[$m)$]  $\overline{\mathbb{K}}$
  denotes the topological ring of Colombeau's full generalized
  numbers, see (\cite{JRJ}, Definition 1.2) 
\item[$n)$] For each $\Omega\subseteq\mathbb{R}^n$, denote
  $\mathcal{G}_s(\Omega)$ as the topological ring of Colombeau's
  simplified generalized functions, see \cite{JO}.
\end{enumerate}
\end{notat}

Let $\mathcal{E}(\Omega)$ be the ring (pointwise operations) of the
functions $u:\mathcal{A}_0(\mathbb{K})\times\Omega\to\mathbb{K}$ such
that $u(\varphi,\cdot)=u_\varphi(\cdot)\in\mathscr{C}^\infty(\Omega)$
for each $\varphi\in\mathcal{A}_0(\mathbb{K})$. If
$\alpha\in\mathbb{N}^n$ and $x\in\Omega$ we set $\partial^\alpha
u_\varphi(x)\coloneqq\partial^\alpha u_\varphi(\cdot)(x)$. Let
$\mathcal{E}_M(\Omega)$ be  the subring of $\mathcal{E}(\Omega)$
consisting of those functions satisfying the following ``moderation''
condition:

$M)$ $\forall~\alpha\in\mathbb{N}^n~\exists~N\in\mathbb{N}$ such that
$\forall~\varphi\in\mathcal{A}_N(\mathbb{K})~\exists~
c=c(\varphi)>0$ and $\eta=\eta(\varphi)\in I$
verifying \[\Vert\partial^\alpha
u_{\varphi_\varepsilon}(\cdot)\Vert\le
c\varepsilon^{-N},~\forall~\varepsilon\in I_\eta.\]
 We define an ideal  $\mathcal{N}(\Omega)$ of $\mathcal{E}_M(\Omega)$
 as the set of  $u\in \mathcal{E}_M(\Omega)$  
that satisfies the following ``nulity'' condition: 

$N)$ $\forall~\alpha\in\mathbb{N}^n~\exists~N\in\mathbb{N}$ and
$\gamma\in\Gamma$ such that $\forall~q\ge N$ and
$\forall~\varphi\in\mathcal{A}_q(\mathbb{K})~\exists~c=c(\varphi)>0$
and $\eta=\eta(\varphi)\in I$ verifying \[\Vert\partial^\alpha
u_{\varphi_\varepsilon}(\cdot)\Vert\le
c\varepsilon^{\gamma(q)-N},~\forall~\varepsilon\in I_\eta.\]

Note that $\mathcal{N}(\Omega)$ is a maximal differential ideal of $\mathcal{E}_M(\Omega)$.
The Colombeau's full generalized functions on $\Omega$ is defined
by \[\mathcal{G}(\Omega)\coloneqq\mathcal{E}_M(\Omega)/\mathcal{N}(\Omega).\]

This definition appears  in \cite{AGJ} for example.  Now, in the case of 
$\bar{\Omega}$, the topological sheaf of
$\Omega\subset\mathbb{R}^n$  we have the Colombeau's full generalized
functions on $\bar{\Omega},~\mathcal{G}(\bar{\Omega})=
\mathcal{E}_M(\bar{\Omega})/\mathcal{N}(\bar{\Omega})$.

Note that if $\varphi\in\mathcal{D}(\Omega), ~\intop_{\Omega}\varphi(x)\dd x=1$
and $\supp(\varphi)\subseteq \bar{B}_1(0)$. Then, for all
$\varepsilon>0,~\varphi_\varepsilon(x)=\varepsilon^{-n}\varphi(\frac{x}{\varepsilon}
),~x\in\Omega\subseteq\mathbb{R}^n$ is such that
$\varphi_\varepsilon\in\mathcal{D}(\Omega),~\intop_{\Omega}\varphi_\varepsilon(x)\dd
x=1$ and $\supp(\varphi_\varepsilon)\subseteq\bar{B}_\varepsilon(0)$.  

In \cite{JO}, was defined an interesting subgroup of
$\inv(\overline{\mathbb{K}}_s)$, i.e., \[Q\coloneqq\{\alpha_r:r\in\mathbb{R}\},\]
where $\alpha_r:I\to\mathbb{R}_+^*$ defined by
$\alpha_r(\varepsilon)=\varepsilon^r$ with inverse
$\alpha_{-r}:I\to\mathbb{R}_+^*$ given by
$\alpha_{-r}(\varepsilon)=\varepsilon^{-r}$. In \cite{JRJ}, was
defined its correpondent subgroup of
$\inv(\overline{\mathbb{K}})$, \[H\coloneqq\{\dot{\alpha}_r:r\in\mathbb{R}\},\]
where $\dot{\alpha}_r:\mathcal{A}_0(\mathbb{K})\to\mathbb{R}_+^*$ is given
by $\dot{\alpha}_r(\varphi)=(i(\varphi))^r$ ($i(\varphi)>0$ that is the
diameter of the $\supp(\varphi),~\varphi\in\mathcal{A}_0(\mathbb{K})$) with inverse
$\dot{\alpha}_{-r}:\mathcal{A}_0(\mathbb{K})\to\mathbb{R}_+^*$ 
given by $\dot{\alpha}_{-r}(\varphi)=(i(\varphi))^{-r}$. In
particular, \[i(\varphi_\varepsilon)=\varepsilon
i(\varphi),~\forall~\varepsilon>0\qquad\mbox{and}\qquad
\dot{\alpha}_{r}(\varphi_\varepsilon)=\varepsilon^r(i(\varphi))^r=\alpha_r(\varepsilon)\dot{\alpha}_r(\varphi).\] 
Hence, if $i(\varphi)\le 1$, then
$\dot{\alpha}_r(\varphi_\varepsilon)\le
\varepsilon^r=\alpha_r(\varepsilon),~\forall~r\in\mathbb{R}$ and if
$i(\varphi)=1$, then
$\dot{\alpha}_{r}(\varphi_\varepsilon)=\alpha_r(\varepsilon)=\varepsilon^r$,
i.e., \[H_{\varphi_\varepsilon}\coloneqq\{\dot{\alpha}_r(\varphi_\varepsilon)|r\in\mathbb{R}\}\subseteq
Q_\varepsilon\coloneqq\{\alpha_r(\varepsilon):r\in\mathbb{R}\}
\qquad\mbox{and}\qquad
H_{\varphi_\varepsilon}=Q_\varepsilon=\{\varepsilon^r:r\in\mathbb{R}\}\]
if and only if $i(\varphi)=1$ for
$\varphi\in\mathcal{A}_0(\mathbb{K})$.

If $\varphi\in\mathcal{A}_0(\mathbb{K})$, then its support number
$\dd(\varphi)$ is defined as in the Notation \ref{nota}, item $i)$. 
From (\cite{NPS}, Remark 1.3) we shall suppose that $i(\varphi)=1$, 
for $\varphi\in\mathcal{A}_0(\mathbb{K})$. So
from now on the support number of $\varphi_\varepsilon$ is equal to
$\varepsilon$ and instead of $\dd(\varphi_\varepsilon)$ we shall write
$\varepsilon$ only. This provides us  an unique extraction of
$\varepsilon$ from $\varphi_\varepsilon$ which is not the case in the
original Colombeau theory.  

Note that with above considerations we have that
$\dot{\alpha}_r\in\inv(\overline{\mathbb{K}}),~\forall~r\in\mathbb{R}$,
and for all $\varphi\in\mathcal{A}_0(\mathbb{K}),~i(\varphi)=1$, there
exists $\eta=\eta(\varphi)\in]0,1[$ such that
$$\lim\limits_{r\to\infty}\dot{\alpha}_r(\varphi_\varepsilon)=\lim\limits_{r\to\infty}(\varepsilon
i(\varphi))^r=0,~\forall~0<\varepsilon<\eta.$$ In this case, we say
that $\lim\limits_{r\to\infty}\dot{\alpha}_r=0$. We shall use this,
for example, in the proof of Lemma \ref{deriv-1} in Section \ref{sec-2}.

\subsection{The sharp topology on Colombeau's full generalized
numbers: a review}

In this subsection, we review some results and definitions about $\overline{\mathbb{K}}$,  
and we start with the following two definitions of \cite{JRJ} that are very
important for the definition of the topology on $\overline{\mathbb{K}}$.

 \begin{defn}
 An element $v\in\overline{\mathbb{K}}$ is associated to zero,
 $v\approx 0$, if for some (hence for each) representative
 $(v(\varphi))_{\varphi}$ of $v$ we have \[\exists
 ~p\in\mathbb{N}~\mbox{such that}~\lim_{\varepsilon\downarrow 0}
 v(\varphi_\varepsilon)=0,
 ~\forall~\varphi\in\mathcal{A}_p(\mathbb{K}).\] Two elements
 $v_1,v_2\in\overline{\mathbb{K}}$ are associated, $v_1\approx v_2$, if
 $(v_1-v_2)\approx 0$. If there exists $a\in\mathbb{K}$ with $v\approx
 a$, then $v$ is said to be associated with $a$ and the latter is
 called the shadow of $v$. 
 \end{defn}

 \begin{defn}
 For a given $x\in\overline{\mathbb{K}}$ we set
 $A(x)\coloneqq\{r\in\mathbb{R}:(\dot{\alpha}_{-r}x)\approx 0\}$ and
 define the valuation of $x$ as $V(x)=\sup(A(x))$.
 \end{defn}

 For the relation of association ``$\approx$'' on $\overline{\mathbb{K}}$
 see (\cite{JRJ}, Definition 1.3). It is easily seen that if
 $x\in\overline{\mathbb{K}}$, then $r\in A(x)\Leftrightarrow\exists~
 p\in\mathbb{N}$ such that 
 $\lim\limits_{\varepsilon\downarrow
   0}\varepsilon^{-r}x(\varphi_\varepsilon)=0,~\forall~\varphi\in\mathcal{A}_p(\mathbb{K})$
 or equivalently $\vert x\vert\le
 \dot{\alpha}_r,~\forall~\varphi_\varepsilon\le 1$ with $\varepsilon$
 sufficiently small. From this, it easily follows that
 $D:\overline{\mathbb{K}}\times\overline{\mathbb{K}}\to\mathbb{R}_+$
 defined by \[D(x,y)\coloneqq e^{-V(x-y)}\] is an ultra-metric on
 $\overline{\mathbb{K}}$ which is invariant under translations. The
 topology resulting from $D$ is so-called the sharp topology on
 $\overline{\mathbb{K}}$ and it is denoted by $\tau_s$. Denote 
the norm of an element $x\in \overline{\mathbb{K}}$  by $\Vert
x\Vert\coloneqq D(x,0)$. Thus, we have the distance between 
two elements $x,y\in\overline{\mathbb{K}}$ which is given by  
$D(x,y)\coloneqq\Vert x-y\Vert$.

 Now, we have the following result from \cite{JRJ}. 

 \begin{cor}\label{norm}
 For given
 $x,y\in\overline{\mathbb{K}},~r\in\mathbb{R},~s\in\mathbb{R}_+^*$ and
 $a,b\in\mathbb{K}$, we have:
 \begin{enumerate}
 \item[$i)$] $\Vert x+y\Vert\le\max(\{\Vert x\Vert,\Vert y\Vert\})$ and
   $\Vert xy\Vert\le \Vert x\Vert\Vert y\Vert$;
 \item[$ii)$] $\Vert x\Vert\ge 0$ and $\Vert x\Vert=0\Leftrightarrow
   x=0$;
 \item[$iii)$] $\Vert ax\Vert=\Vert x\Vert$, if $a\ne 0$;
 \item[$iv)$] $\Vert\dot{\alpha}_rx\Vert=e^{-r}\Vert x\Vert$ and
   $\Vert\dot{\beta}_sx\Vert=s\Vert x\Vert$, where $\dot{\beta}_s=\dot{\alpha}_{-\log(s)}$;
 \item[$v)$] $\Vert a\Vert=1$, if $a\ne 0$;
 \item[$vi)$] $\Vert a-b\Vert=1-\delta_{ab}$ (Kronecker's $\delta$).
 \end{enumerate}
 \end{cor}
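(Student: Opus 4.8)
The plan is to push every assertion back to the auxiliary set $A(x)=\{r\in\mathbb{R}:\dot{\alpha}_{-r}x\approx 0\}$ and the valuation $V(x)=\sup A(x)$, relying throughout on the pointwise description recalled just before the statement: $r\in A(x)$ if and only if there is $p\in\mathbb{N}$ with $\lim_{\varepsilon\downarrow 0}\varepsilon^{-r}x(\varphi_{\varepsilon})=0$ for every $\varphi\in\mathcal{A}_{p}(\mathbb{K})$, together with the facts that $\mathcal{A}_{q+1}(\mathbb{K})\subseteq\mathcal{A}_{q}(\mathbb{K})$ and that the operations on $\overline{\mathbb{K}}$ are pointwise. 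First I would record two structural facts. (a) $A(x)$ is a half-line: if $r\in A(x)$ and $r'\le r$ then $r'\in A(x)$, since $\varepsilon^{-r'}x(\varphi_{\varepsilon})=\varepsilon^{\,r-r'}\bigl(\varepsilon^{-r}x(\varphi_{\varepsilon})\bigr)\to 0$. (b) A small ``calculus of $A$'': $A(x)\cap A(y)\subseteq A(x+y)$, $A(x)+A(y)\subseteq A(xy)$, $A(ax)=A(x)$ for $a\in\mathbb{K}$ with $a\ne 0$, and $A(\dot{\alpha}_{r}x)=r+A(x)$. Each of these is immediate from the pointwise criterion (passing, when two elements are involved, to the intersection $\mathcal{A}_{\max(p,p')}(\mathbb{K})$): sums of null sequences are null, products of null sequences are null, $a\ne 0$ is invertible, and $(\dot{\alpha}_{r}x)(\varphi_{\varepsilon})=\varepsilon^{r}x(\varphi_{\varepsilon})$ because $i(\varphi)=1$.

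Taking suprema in (b) gives $V(x+y)\ge\min(V(x),V(y))$, $V(xy)\ge V(x)+V(y)$, $V(ax)=V(x)$, and $V(\dot{\alpha}_{r}x)=r+V(x)$; applying $\Vert\cdot\Vert=e^{-V(\cdot)}$ converts these into $i)$, $iii)$, and the first identity of $iv)$, and the second identity of $iv)$ is just the substitution $s=e^{-(-\log s)}$ in $\Vert\dot{\alpha}_{-\log s}x\Vert=e^{\log s}\Vert x\Vert$. For $v)$ I would compute $A(a)$ directly for a nonzero constant $a\in\mathbb{K}$: the value of $\dot{\alpha}_{-r}a$ at $\varphi_{\varepsilon}$ is $\varepsilon^{-r}a$, which tends to $0$ exactly for $r<0$, so $A(a)=\,]-\infty,0[\,$, $V(a)=0$ and $\Vert a\Vert=1$; then $vi)$ follows by the case split $a=b$ (so $\Vert a-b\Vert=\Vert 0\Vert=0$) versus $a\ne b$ (apply $v)$ to the nonzero constant $a-b$), which is precisely $1-\delta_{ab}$. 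In $ii)$, $\Vert x\Vert\ge 0$ is built into $e^{-V(\cdot)}$ with the convention $e^{-\infty}=0$, and the implication $\Vert x\Vert=0\Leftarrow x=0$ is routine: if $x=0$ in $\overline{\mathbb{K}}$ then a representative lies in $\mathcal{N}$, hence obeys an estimate $|x(\varphi_{\varepsilon})|\le c\,\varepsilon^{\gamma(q)-N}$ for $\varphi\in\mathcal{A}_{q}(\mathbb{K})$, $q\ge N$; since $\gamma\in\Gamma$ diverges, for each $r$ one picks $q\ge N$ with $\gamma(q)-N>r$ and obtains $r\in A(x)$, whence $V(x)=+\infty$ and $\Vert x\Vert=0$.

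The one step that is more than bookkeeping is the converse in $ii)$: deducing from $\Vert x\Vert=0$, i.e.\ from $\dot{\alpha}_{-r}x\approx 0$ for \emph{all} $r\in\mathbb{R}$, that a representative of $x$ actually lies in the null ideal $\mathcal{N}$, that is, that it satisfies the \emph{uniform} estimate with a single admissible sequence $\gamma\in\Gamma$, rather than merely a separate decay rate for each exponent $r$ with the index $p$ depending on $r$. The plan here is a diagonal extraction over the scales $\mathcal{A}_{p}(\mathbb{K})$: for each $k\in\mathbb{N}$ choose $p_{k}$, increasing in $k$, witnessing $k\in A(x)$; then define $\gamma$ so that $\gamma(q)$ is, up to the correction needed to make it strictly increasing and divergent, the largest $k$ with $p_{k}\le q$, and verify that this $\gamma$ together with a suitable $N$ satisfies condition $N)$. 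I expect this extraction to be the main obstacle; it is the genuine content attributed to \cite{JRJ}, and in the write-up I would either cite it directly or reproduce the argument, since everything else in Corollary~\ref{norm} is the $e^{-V}$-translation of facts (a) and (b) above.
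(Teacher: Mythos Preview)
The paper does not prove Corollary~\ref{norm} at all: it is introduced with the sentence ``Now, we have the following result from \cite{JRJ}'' and no argument is given. So there is nothing to compare your approach against; you are supplying what the paper outsources.

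That said, your plan is sound. The reductions via the half-line structure of $A(x)$ and the ``calculus of $A$'' in your point (b) are exactly right, and the passage from the valuation inequalities to $i)$--$vi)$ via $\Vert\cdot\Vert=e^{-V(\cdot)}$ is routine and correct. Your identification of the only nontrivial step---the implication $\Vert x\Vert=0\Rightarrow x=0$ in $ii)$---is accurate, and the diagonal extraction you outline is the standard way to manufacture a single $\gamma\in\Gamma$ and a single $N$ out of the family of witnesses $p_k$ for $k\in A(x)$. One small point worth making explicit in the write-up: from $\varepsilon^{-k}x(\varphi_\varepsilon)\to 0$ you first pass to a genuine bound $|x(\varphi_\varepsilon)|\le c_\varphi\varepsilon^{k}$ for small $\varepsilon$ (boundedness of a null sequence), since the $N)$-condition is phrased as an inequality rather than a limit; and when building $\gamma$ you must ensure strict monotonicity and divergence while keeping $\gamma(q)-N$ no larger than the largest $k$ with $p_k\le q$, which may require a mild adjustment such as $\gamma(q)=\max\{k:p_k\le q\}+q/(q+1)$ or an analogous trick. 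With that in place, citing \cite{JRJ} for this step, as you suggest, is entirely in keeping with how the paper itself treats the corollary.
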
  

 Now, we remind  the following result of \cite{JRJ} which will be
 important to prove the  Proposition \ref{homom-1} in the  
 Section \ref{sec-3}.

 \begin{lem}\label{lema2.1}
 \begin{enumerate}
 \item[$i)$] $x\in B_1(0)\Leftrightarrow V(x)>0$;
 \item[$ii)$] If $x\in B_1(0)$, then $x\approx 0$ and $D(1,x)=1$. Hence,
   $1\notin\bar{B}_1(0),~B_1(0)\cap B_1(1)=\emptyset,~B_1'(0)\supset \bar{B}_1(0)$
   and $B_1'(0)\ne \bar{B}_1(0)$.
 \end{enumerate}
 \end{lem}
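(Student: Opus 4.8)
The plan is to reduce everything to the definitions of the norm $\Vert\cdot\Vert$, the valuation $V$ and the association ``$\approx$'', together with the arithmetic of ultra-metrics and Corollary \ref{norm}. Part $(i)$ is a pure translation: since $\Vert x\Vert=D(x,0)=e^{-V(x)}$, the condition $x\in B_1(0)$, i.e. $\Vert x\Vert<1$, is equivalent to $e^{-V(x)}<1$, i.e. to $V(x)>0$ (with the convention $V(0)=+\infty$, which matches $\Vert 0\Vert=0$).

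For the first assertion of $(ii)$ I would proceed as follows. If $x\in B_1(0)$ then $V(x)=\sup A(x)>0$ by $(i)$, so there is some $r\in A(x)$ with $r>0$. By the characterisation of $A(x)$ recorded just before Corollary \ref{norm}, this yields a $p\in\mathbb{N}$ with $\lim_{\varepsilon\downarrow 0}\varepsilon^{-r}x(\varphi_\varepsilon)=0$ for every $\varphi\in\mathcal{A}_p(\mathbb{K})$. Since $r>0$, the factor $\varepsilon^{r}=\dot{\alpha}_r(\varphi_\varepsilon)$ is bounded on $I$ (indeed it tends to $0$), so $x(\varphi_\varepsilon)=\varepsilon^{r}\bigl(\varepsilon^{-r}x(\varphi_\varepsilon)\bigr)\to 0$ for all $\varphi\in\mathcal{A}_p(\mathbb{K})$; that is, $x\approx 0$. (Equivalently, the same estimate shows that $A(x)$ is closed under passing to smaller values of $r$, so $0\in A(x)$, i.e. $\dot{\alpha}_0x=x\approx 0$.)

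For the equality $D(1,x)=1$ I would write $D(1,x)=\Vert 1-x\Vert$ and use $\Vert 1\Vert=1$ (Corollary \ref{norm}$(v)$) together with $\Vert -x\Vert=\Vert x\Vert<1$ (Corollary \ref{norm}$(iii)$ and part $(i)$). As these two norms are unequal, the sharpened ultra-metric inequality --- which follows from Corollary \ref{norm}$(i)$ by the usual ``isosceles'' argument ($\Vert 1\Vert=\Vert(1-x)+x\Vert\le\max(\Vert 1-x\Vert,\Vert x\Vert)$ forces $\Vert 1-x\Vert\ge\Vert 1\Vert$) --- gives $\Vert 1-x\Vert=\max(\Vert 1\Vert,\Vert x\Vert)=1$. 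The ``Hence'' clauses are then formal: for every $x\in B_1(0)$ one has $D(x,1)=1$, hence $x\notin B_1(1)$, so $B_1(0)\cap B_1(1)=\emptyset$; and $B_1(1)$ is an open neighbourhood of $1$ (since $D(1,1)=0$) which misses $B_1(0)$, so $1\notin\bar{B}_1(0)$, the closure of $B_1(0)$; moreover $\bar{B}_1(0)\subseteq B_1'(0)=\{y:D(y,0)\le 1\}$ by continuity of $D(\cdot,0)$, while $1\in B_1'(0)\setminus\bar{B}_1(0)$ because $\Vert 1\Vert=1$, so the inclusion is proper. One may also note that open balls are clopen in an ultra-metric space, whence in fact $\bar{B}_1(0)=B_1(0)$.

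I do not anticipate a genuine obstacle: the lemma is essentially bookkeeping. The two points to handle with a little care are obtaining the \emph{exact} value $D(1,x)=1$ rather than merely $D(1,x)\le 1$ --- for which the strict form of the ultra-metric inequality is essential --- and correctly reading $\bar{B}_1(0)$ as the topological closure of the \emph{open} ball and $B_1'(0)$ as the \emph{closed} ball $\{y:D(y,0)\le 1\}$, without which the stated (non-)inclusions would fail.
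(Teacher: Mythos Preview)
Your argument is correct and is essentially the natural one from the definitions. Note, however, that the paper does not give its own proof of this lemma: it is stated as a result quoted from \cite{JRJ} (``we remind the following result of \cite{JRJ}''), so there is no in-paper proof to compare against. Your write-up would serve perfectly well as the omitted proof.
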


 The basic notation and some properties of the algebraic and the topological
 structure of $\overline{\mathbb{K}}$ can be found in \cite{JRJ}
 Let $\overline{\mathbb{K}}^{n}\coloneqq\{(x_1,x_2,\dots,x_n):
 x_i\in\overline{\mathbb{K}},~\forall~i=1,2,\dots
 n\}$ with the product topology, see \cite{JRJ} for more details. 
.

 If $x=(x_{1},x_{2},\ldots,x_{n})\in\overline{\mathbb{K}}^{n}$ we
 define $\Vert x\Vert_{n}\coloneqq\max\{\Vert x_{i}\Vert:1\leq i\leq n\}$,
 where $\Vert x_{i}\Vert$ is defined as before, and frequently  
 the subscript $n$ will be omitted from the  notation. 

 If $r\in\mathbb{R}_{+}^{*}$ and $x_{0}\in\overline{\mathbb{K}}^{n}$, then 
 $$B_{r}(x_{0})=\{x\in\overline{\mathbb{K}}^{n}:\Vert
 x-x_{0}\Vert<r\},\,B_{r}'(x_{0})=
 \{x\in\overline{\mathbb{K}}^{n}:\Vert x-x_{0}\Vert\leq r\}$$
 and $$S_{r}(x_{0})=\{x\in\overline{\mathbb{K}}^{n}:\Vert x-x_{0}\Vert=r\}$$
 are  the open ball of center in $x_{0}$ and ratio $r$,
 the closed ball of center in $x_{0}$ and ratio $r$ and the sphere
 of center in $x_{0}$ and ratio $r$, respectively.

\begin{rem}\label{lema2.1-1} It is convenient to point out that we 
easily  extend for $\overline{\mathbb{K}}^n$  the definitions of 
$B_r(x_0)$, $B'_r(x_0)$  and $S_r(x_0)$.
\end{rem}

\section{Differential Calculus over the ring of Colombeau's full 
generalized numbers}\label{sec-2}

We begin with the following lemma that will be fundamental 
to introduce the concept of the differentiable functions in the 
framework of Colombeau's full generalized numbers.

 \begin{lem}\label{deriv-1}
 Let $U\subset \overline{\mathbb{K}}$ be an open subset,
 $f:U\to\overline{\mathbb{K}}$ 
 a function and $x_0\in U$. Then there exists at most one $z_0\in\overline{\mathbb{K}}$, such that
 \begin{equation}\label{eq:deriv-1}
 \lim_{x\to x_0}\frac{f(x)-f(x_0)-z_0(x-x_0)}{\dot{\beta}_{\Vert
     x-x_0\Vert}}=0,
 \end{equation}
 where $\dot{\beta}_{\Vert x-x_0\Vert}=\dot{\alpha}_{-\log(\Vert
   x-x_0\Vert)}$ (as in Corolary \ref{norm}, item $iv)$, with $s=\Vert x-x_0\Vert$).
 \end{lem}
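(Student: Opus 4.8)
The plan is to prove uniqueness by the standard "two candidates give the same limit, hence their difference is zero" argument, adapted to the ultra-metric setting of $\overline{\mathbb{K}}$. Suppose $z_0$ and $z_1$ both satisfy \eqref{eq:deriv-1}. Subtracting the two limit relations and using that the difference quotient is linear in the numerator, I would obtain
\[
\lim_{x\to x_0}\frac{(z_1-z_0)(x-x_0)}{\dot{\beta}_{\Vert x-x_0\Vert}}=0.
\]
Setting $w\coloneqq z_1-z_0$, the goal is to deduce $w=0$. The key observation is that by Corollary \ref{norm}, item $iv)$, one has $\Vert\dot{\beta}_{\Vert x-x_0\Vert}\Vert=\Vert x-x_0\Vert$, and by item $i)$ of the same corollary together with item $ii)$, $\Vert w(x-x_0)\Vert\le\Vert w\Vert\,\Vert x-x_0\Vert$; more importantly I want a lower bound. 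The natural route is to evaluate the limit along a carefully chosen net of points $x$ approaching $x_0$ for which $\Vert w(x-x_0)\Vert$ is comparable to $\Vert x-x_0\Vert$, forcing the norm of the quotient to stay bounded below by $\Vert w\Vert$, which then must be $0$.

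Concretely, first I would reduce to $x_0=0$ by translation invariance of $D$ (stated after Corollary \ref{norm}), so we consider $\lim_{x\to 0} w x/\dot{\beta}_{\Vert x\Vert}=0$. Then I would specialize $x$ to the elements $x=\dot{\alpha}_r\cdot 1$ for $r\to\infty$ (using that $\dot{\alpha}_r\in\inv(\overline{\mathbb{K}})$ and $\lim_{r\to\infty}\dot{\alpha}_r=0$, as recorded in the paragraph preceding the subsection on the sharp topology, which is exactly the fact the lemma statement says will be used here). For such $x$ one has $\Vert x\Vert=\Vert\dot{\alpha}_r\Vert=e^{-r}$ by item $iv)$ of Corollary \ref{norm}, and $\dot{\beta}_{\Vert x\Vert}=\dot{\alpha}_{-\log(e^{-r})}=\dot{\alpha}_{r}$, so $x/\dot{\beta}_{\Vert x\Vert}=\dot{\alpha}_r/\dot{\alpha}_r=1$ (using that $\dot{\alpha}_r$ is invertible). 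Hence along this net the quotient equals $w\cdot 1=w$, a constant, so the limit being $0$ forces $w=0$, i.e. $z_1=z_0$.

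The main subtlety — and the step I expect to require the most care — is justifying that this particular net $x=\dot{\alpha}_r\cdot 1$ genuinely \emph{converges to $0$ in $\tau_s$} as $r\to\infty$ and is admissible in the limit \eqref{eq:deriv-1}, i.e. that these points eventually lie in $U$ and that $\dot{\beta}_{\Vert x-x_0\Vert}$ is well-defined (which needs $\Vert x-x_0\Vert\in\mathbb{R}_+^*$, true since $x\ne x_0$). Convergence is immediate: $D(\dot{\alpha}_r\cdot 1,0)=\Vert\dot{\alpha}_r\Vert=e^{-r}\to 0$. Admissibility in $U$ holds because $U$ is open and $x_0\in U$, so a whole ball $B_\rho(x_0)\subseteq U$, and $x\in B_\rho(x_0)$ once $e^{-r}<\rho$. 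One should also double check the algebra $x/\dot{\beta}_{\Vert x\Vert}=1$: since $\dot{\beta}_s=\dot{\alpha}_{-\log s}$ and here $s=e^{-r}$, we get $-\log s=r$, so indeed $\dot{\beta}_{\Vert x\Vert}=\dot{\alpha}_r=x$ (after the translation reduction, with the "$1$" direction), giving quotient $w$. Thus $\Vert w\Vert=\lim\Vert w\Vert=0$, so $w=0$ by Corollary \ref{norm}, item $ii)$, completing the proof.
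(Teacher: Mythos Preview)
Your argument is correct and follows essentially the same route as the paper: after subtracting the two limit relations, the paper also specializes to the points $x_n\coloneqq x_0+\dot{\alpha}_n$ (with $n\in\mathbb{N}$ rather than a continuous parameter $r$), observes that $\dot{\beta}_{\Vert x_n-x_0\Vert}=\dot{\alpha}_n$, and concludes that the quotient is identically $z_1-z_0$, forcing $z_1=z_0$. Your additional remarks about eventual membership in $U$ and well-definedness of $\dot{\beta}_{\Vert x-x_0\Vert}$ are more careful than the paper's presentation but do not change the substance of the proof.
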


 \begin{proof}
 Let  $z_0,z_1$ be elements of $\overline{\mathbb{K}}$  such that the limit in (\ref{eq:deriv-1})
 is zero for both. Then  $$\lim_{x\to
   x_0}\frac{(z_1-z_0)(x-x_0)}{\dot{\beta}_{\Vert
     x-x_0\Vert}}=0.$$ 
 In particular, let $x_n\coloneqq x_0+\dot{\alpha}_n$. Then
 $\dot{\alpha}_n=x_n-x_0$ and $\dot{\beta}_{\Vert x_n-x_0\Vert}=
 \dot{\alpha}_{-\log(\Vert x_n-x_0\Vert)}=\dot{\alpha}_n$. 
 Thus,  we have that $$0=\lim_{x_n\to
   x_0}\frac{(z_1-z_0)(x_n-x_0)}{\dot{\beta}_{\Vert
     x_n-x_0\Vert}}=
 \lim_{n\to\infty}\frac{(z_1-z_0)\dot{\alpha}_n}{\dot{\alpha}_n}=
 \lim_{n\to\infty}(z_1-z_0)=z_1-z_0.$$ Hence, $z_1=z_0$. 
 \end{proof}

 The Lemma \ref{deriv-1} tell us that the following definition, which
 is the exact generalization of the Frechet derivative, is meaningful.

 \begin{defn}\label{deriv-2}
 Given an open set
 $U\subset\overline{\mathbb{K}},~f:U\to\overline{\mathbb{K}}$ and 
 $x_0\in U$ we shall say that $f$ is differentiable in $x_0$ if there 
 exists $z_0\in\overline{\mathbb{K}}$, such that the limit in
 (\ref{eq:deriv-1}) is valid. In this case $f$ is said to be
 differentiable in $x_0$, and we write $D(f)(x_0)=z_0$ and shall call
 $z_0$ the derivative of $f$ in $x_0$. We shall say that $f$ is
 differentiable if it is differentiable in each point of its domain.
 \end{defn}

 \begin{rem}\label{deriv-3}
 \begin{enumerate}
 \item[$a)$] The differentiability of $f$ in $x_0$ is equivalent 
 to the statement  that 
 \begin{equation}\label{eq:deriv-2}
 \lim_{x\to x_0}\frac{\Vert T(x)\Vert}{\Vert x-x_0\Vert}=0,
 \end{equation}
 where 
 \begin{equation}\label{eq:deriv-3}
 T(x)\coloneqq f(x)-f(x_0)-D(f)(x_0)(x-x_0),
 \end{equation}
 because by Corollary \ref{norm} item $iv)$, we have that
 \[\Vert\dot{\beta}_{\Vert
   x-x_0\Vert}\Vert=\Vert\dot{\alpha}_{-\log(\Vert
   x-x_0\Vert)}\Vert=e^{\log(\Vert x-x_0\Vert)}=\Vert x-x_0\Vert.\] 
 The choice of the limit that appears in (\ref{eq:deriv-1}) (instead of
 the limit in (\ref{eq:deriv-2})) follows from the necessity to avoid
 additional difficulties in the proof of some properties. Moreover, 
 by  the fact that $T(x)\in\overline{\mathbb{K}}$ we have that it  is natural to
 work, in the definition of the derivative, with a quotient of $T(x)$ 
 by an element of $\overline{\mathbb{K}}$ which is an "infinitesimal 
 together with $\Vert x-x_0\Vert$". Since  $\overline{\mathbb{K}}$
 is not a field, this infinitesimal must be an invertible element of 
 $\overline{\mathbb{K}}$ and we get that  the choice of
 $\dot{\beta}_{\Vert x-x_0\Vert}$ seems very natural since, 
 $\Vert\dot{\beta}_r\Vert=r,~\forall~r\in\mathbb{R}$.
 
 Note that the limit in (\ref{eq:deriv-2}) is not equivalent to 
 \begin{equation}\label{eq:deriv-4}
 \lim_{x\to x_0}\frac{T(x)}{\Vert x-x_0\Vert}=0,
 \end{equation}
 because of Corollary \ref{norm} item $iii)$, we have that 
 \begin{eqnarray*}
 \left\Vert\frac{T(x)}{\Vert x-x_0\Vert}\right\Vert&=&\left\Vert\frac{1}{\Vert x-x_0\Vert}T(x)\right\Vert\\
 &=&\Vert T(x)\Vert.
 \end{eqnarray*}
 Thus, if the differentiability of $f$ in $x_0$ is defined by the 
 limit in (\ref{eq:deriv-4}) with $T(x)$ as in (\ref{eq:deriv-3}), 
 then the continuity at  $x_0$ would imply its
 differentiability at $x_0$. Hence,  its derivative would be non-unique 
 because of any element of $\overline{\mathbb{K}}$ would be derivative
 of $f$ at $x_0$. So, this is not a good way to define
 differentiability in this context.
 \item[$b)$] If $f$ is differentiable in $x_0$, then we have that  
 \begin{equation}\label{eq:deriv-5}
 f(x)-f(x_0)=D(f)(x_0)(x-x_0)+E(x)
 \end{equation}
 with $\lim\limits_{x\to x_0}\frac{E(x)}{\dot{\beta}_{\Vert
     x-x_0\Vert}}=0$. Moreover, 
$D(f)(x_0)=\lim\limits_{n\to\infty}\frac{f(x_0+\dot{\alpha}_n)-f(x_0)}{\dot{\alpha}_n}$.
 \end{enumerate}
 \end{rem}

 From now on we use Remark \ref{deriv-3} item $b)$ without further mention.

 \begin{lem}\label{deriv-4}
 Let $U\subset\overline{\mathbb{K}}$ be an open subset. If
 $f:U\to\overline{\mathbb{K}}$ 
 is differentiable at $x_0$, then $f$ is continuous at $x_0$.
 \end{lem}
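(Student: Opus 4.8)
The plan is to reduce everything to the first–order expansion recorded in Remark \ref{deriv-3} item $b)$. Since $f$ is differentiable at $x_0$, we may write
$$f(x)-f(x_0)=D(f)(x_0)(x-x_0)+E(x),\qquad \lim_{x\to x_0}\frac{E(x)}{\dot{\beta}_{\Vert x-x_0\Vert}}=0.$$
By the ultra-metric inequality of Corollary \ref{norm} item $i)$,
$$\Vert f(x)-f(x_0)\Vert\le\max\bigl\{\,\Vert D(f)(x_0)(x-x_0)\Vert,\ \Vert E(x)\Vert\,\bigr\},$$
so it suffices to show that each of the two quantities inside the maximum tends to $0$ as $x\to x_0$.

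For the linear term I would just invoke submultiplicativity of the norm (again Corollary \ref{norm} item $i)$): $\Vert D(f)(x_0)(x-x_0)\Vert\le\Vert D(f)(x_0)\Vert\,\Vert x-x_0\Vert$. Here $\Vert D(f)(x_0)\Vert$ is a fixed nonnegative real number while $\Vert x-x_0\Vert\to 0$, so this term vanishes in the limit.

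The point where the particular normalization chosen in Definition \ref{deriv-2} is used is the remainder term. For $x\neq x_0$ write $E(x)=\dot{\beta}_{\Vert x-x_0\Vert}\cdot\bigl(E(x)/\dot{\beta}_{\Vert x-x_0\Vert}\bigr)$ (legitimate since $\dot{\alpha}_r\in\inv(\overline{\mathbb{K}})$ and the ring is commutative), and apply Corollary \ref{norm} item $iv)$ with $s=\Vert x-x_0\Vert$ to get
$$\Vert E(x)\Vert=\Vert x-x_0\Vert\cdot\left\Vert\frac{E(x)}{\dot{\beta}_{\Vert x-x_0\Vert}}\right\Vert.$$
The second factor tends to $0$ by the differentiability hypothesis, and the first factor tends to $0$ as well, so $\Vert E(x)\Vert\to 0$. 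Combining this with the estimate for the linear term and the ultra-metric inequality above gives $\Vert f(x)-f(x_0)\Vert\to 0$, i.e.\ $f$ is continuous at $x_0$.

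I do not anticipate any genuine difficulty; the only mild care needed is that the quotient by $\dot{\beta}_{\Vert x-x_0\Vert}$ requires $x\neq x_0$, which is harmless because continuity at $x_0$ concerns only the values of $f$ at points distinct from $x_0$. $\qed$
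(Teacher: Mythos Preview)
Your proof is correct and follows essentially the same route as the paper: use the expansion $f(x)-f(x_0)=D(f)(x_0)(x-x_0)+E(x)$ from Remark~\ref{deriv-3}\,$b)$ and show that $E(x)\to 0$ via the multiplicative behaviour of $\Vert\cdot\Vert$ with respect to $\dot{\beta}_s$. The only difference is cosmetic: you treat the linear term explicitly and invoke Corollary~\ref{norm}\,$iv)$ to get $\Vert E(x)\Vert=\Vert x-x_0\Vert\cdot\Vert E(x)/\dot{\beta}_{\Vert x-x_0\Vert}\Vert$, while the paper reaches $\Vert E(x)\Vert\to 0$ by a slightly more compressed computation appealing to item~$iii)$.
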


 \begin{proof}
 Since \[\lim\limits_{x\to
   x_0}\frac{E(x)}{\dot{\beta}_{\Vert x-x_0\Vert}}=0\] 
 then for any $\varepsilon>0$ there exists $\delta>0$, 
 such that $\left\Vert \frac{E(x)}{\dot{\beta}_{\Vert
       x-x_0\Vert}}\right\Vert<\varepsilon$ 
 always that $\Vert x-x_0\Vert<\delta$. Note that 
 \begin{eqnarray*}
 \left\Vert \frac{E(x)}{\dot{\beta}_{\Vert
   x-x_0\Vert}}\right\Vert&=&\left\Vert\frac{E(x)}{\Vert x-x_0\Vert}\right\Vert\\
 &=&\left\Vert\frac{1}{\Vert x-x_0\Vert}E(x)\right\Vert\\
 &=&\Vert E(x)\Vert
 \end{eqnarray*}
 where the  last equality is due to Corollary \ref{norm} item $iii)$.
 Thus, we have that  $\Vert E(x)\Vert<\varepsilon$ always that $\Vert
 x-x_0\Vert<\delta$ which implies that $\lim\limits_{x\to x_0}E(x)=0$ 
 and the result follows.
 \end{proof}

 We now give an example of a non-constant function whose derivative
 vanishes everywhere. Hence a function that is not determined by its derivative.
 This example also shows that the ``Mean Value Theorem'' is 
false in general in our context, thus as in \cite{OJR}.

 \begin{exam}\label{deriv-5}
 Let $$f(x)=\left\{\begin{array}{ll}
 \dot{\beta}_{\Vert x\Vert^2}, &\mbox{if $x\in\overline{\mathbb{K}}^*$}\\
 0, &\mbox{if $x=0$}.
 \end{array}\right.$$
 If $x_0\ne 0$, then $f$ is constant in the neighborhood $S_{\Vert
   x_0\Vert}$ of $x_0$ and we have that $D(f)(x_0)=0$ because of 
 \begin{eqnarray*}
 D(f)(x_0)&=&\lim_{n\to\infty}\frac{f(0+\dot{\alpha}_n)-f(0)}{\dot{\alpha}_n}\\
 &=&\lim_{n\to\infty}\frac{f(\dot{\alpha}_n)}{\dot{\alpha}_n}\\
 &=&\lim_{n\to\infty}\frac{\dot{\alpha}_{-2\log\Vert\dot{\alpha}_n\Vert}}{\dot{\alpha}_n}\\
 &=&\lim_{n\to\infty}\frac{\dot{\alpha}_{2n}}{\dot{\alpha}_n}\\
 &=&\lim_{n\to\infty}\dot{\alpha}_n\\
 &=&0.
 \end{eqnarray*}
 \end{exam}

 Now, it is convenient to point out that a function will be 
 called almost constant if it has vanishing derivative.

 Using the Remark \ref{deriv-3} and the standard proofs of ordinary
 differential calculus we obtain the following result.

 \begin{prop}\label{deriv-6}
 Let $U\subset \overline{\mathbb{K}}$ be an open subset. If
 $f,g:U\to\overline{\mathbb{K}}$ be differentiable, then
 \begin{enumerate}
 \item[$a)$] $fg$ is differentiable and $D(fg)=D(f)g+fD(g)$;
 \item[$b)$] If $f(U)$ is contained in the domain of $g$, then
   $D(f\circ g)=(D(g)\circ f)D(f)$;
 \item[$c)$] $D(f\pm g)=D(f)\pm D(g)$ and $D(cf)=cD(f)$, if $c$ is constant;
 \item[$d)$] if $g(x)\in\inv(\overline{\mathbb{K}}),~\forall~x\in U$ we
   have that $D\left(\dfrac{f}{g}\right)=\dfrac{D(f)g-fD(g)}{g^2}$.
 \end{enumerate}
 \end{prop}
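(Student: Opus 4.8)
The plan is to mimic the classical proofs of the product, chain, sum, scalar, and quotient rules, using Remark \ref{deriv-3}(b) to write $f(x) - f(x_0) = D(f)(x_0)(x-x_0) + E_f(x)$ with $\lim_{x\to x_0} E_f(x)/\dot\beta_{\Vert x-x_0\Vert} = 0$, and likewise for $g$. The one genuine subtlety, which must be checked throughout, is that $\overline{\mathbb K}$ is not a field and the relevant ``infinitesimal divisor'' is $\dot\beta_{\Vert x-x_0\Vert}$ rather than $(x-x_0)$ itself; so whenever I manipulate error terms I will keep them over $\dot\beta_{\Vert x-x_0\Vert}$ and use Corollary \ref{norm}(iii)--(iv) to pass between $\Vert E(x)/\dot\beta_{\Vert x-x_0\Vert}\Vert$ and $\Vert E(x)\Vert/\Vert x-x_0\Vert$, exactly as in the proof of Lemma \ref{deriv-4}. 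For items (a) and (c) I also freely use that differentiability implies continuity (Lemma \ref{deriv-4}), so $f(x)\to f(x_0)$ and $\Vert f(x)-f(x_0)\Vert$ stays bounded near $x_0$.

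For (c), linearity, just add/subtract the defining relations: $(f\pm g)(x) - (f\pm g)(x_0) - (D(f)(x_0)\pm D(g)(x_0))(x-x_0) = E_f(x)\pm E_g(x)$, and the quotient by $\dot\beta_{\Vert x-x_0\Vert}$ tends to $0$ by the ultra-metric estimate $\Vert u+v\Vert\le\max(\Vert u\Vert,\Vert v\Vert)$ from Corollary \ref{norm}(i); the $cf$ case uses Corollary \ref{norm}(iii). For (a), write $f(x)g(x) - f(x_0)g(x_0) = (f(x)-f(x_0))g(x) + f(x_0)(g(x)-g(x_0))$, substitute the two expansions, and collect: the main term is $(D(f)(x_0)g(x) + f(x_0)D(g)(x_0))(x-x_0)$, and one further replaces $g(x)$ by $g(x_0)$ at the cost of $D(f)(x_0)(x-x_0)(g(x)-g(x_0))$; dividing by $\dot\beta_{\Vert x-x_0\Vert}$, each leftover piece is a product of something with bounded norm (using $\Vert \cdot\Vert$ sub-multiplicativity and continuity of $g$) times a factor tending to $0$, so the limit is $0$ and $D(fg)(x_0) = D(f)(x_0)g(x_0) + f(x_0)D(g)(x_0)$.

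For (b), the chain rule, set $y_0 = f(x_0)$ and write, for $x$ near $x_0$, $g(f(x)) - g(f(x_0)) = D(g)(y_0)(f(x)-y_0) + E_g(f(x))$, then substitute $f(x) - y_0 = D(f)(x_0)(x-x_0) + E_f(x)$; the principal term is $D(g)(y_0)D(f)(x_0)(x-x_0)$, and the error is $D(g)(y_0)E_f(x) + E_g(f(x))$. The first error term over $\dot\beta_{\Vert x-x_0\Vert}$ goes to $0$ immediately. The delicate term is $E_g(f(x))/\dot\beta_{\Vert x-x_0\Vert}$: using Corollary \ref{norm}(iii)--(iv), its norm equals $\Vert E_g(f(x))\Vert/\Vert x-x_0\Vert = \big(\Vert E_g(f(x))\Vert/\Vert f(x)-y_0\Vert\big)\cdot\big(\Vert f(x)-y_0\Vert/\Vert x-x_0\Vert\big)$ whenever $f(x)\ne y_0$, and the first factor tends to $0$ by differentiability of $g$ at $y_0$ (composed with continuity of $f$, Lemma \ref{deriv-4}) while the second is bounded since $\Vert f(x)-y_0\Vert\le\max(\Vert D(f)(x_0)\Vert\Vert x-x_0\Vert,\Vert E_f(x)\Vert)$ and both terms on the right are $O(\Vert x-x_0\Vert)$; the case $f(x)=y_0$ contributes $E_g(y_0)=0$. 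I expect \emph{this} step — controlling $E_g(f(x))$ when $f$ may fail to be injective and the ground ring is only an ultra-metric ring, not a normed field — to be the main obstacle, and the ultra-metric inequality of Corollary \ref{norm}(i) together with Lemma \ref{deriv-4} is exactly what makes it go through.

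Finally, for (d) it suffices by (a) to treat $D(1/g)$ and then apply the product rule to $f\cdot(1/g)$. Since $g(x)\in\inv(\overline{\mathbb K})$ for all $x\in U$ and $g$ is continuous, one checks $1/g$ is continuous, and then $\frac1{g(x)} - \frac1{g(x_0)} = -\frac{g(x)-g(x_0)}{g(x)g(x_0)}$; substituting the expansion of $g$ and dividing by $\dot\beta_{\Vert x-x_0\Vert}$ gives principal part $-\frac{D(g)(x_0)}{g(x_0)^2}(x-x_0)$ plus errors handled as before (using sub-multiplicativity of $\Vert\cdot\Vert$ and boundedness of $\Vert 1/(g(x)g(x_0))\Vert$ near $x_0$), so $D(1/g)(x_0) = -D(g)(x_0)/g(x_0)^2$; combining with (a) yields $D(f/g) = (D(f)g - fD(g))/g^2$.
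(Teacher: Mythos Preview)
Your proposal is correct and follows exactly the approach the paper itself indicates: the paper does not give a detailed proof of Proposition~\ref{deriv-6} at all, merely stating that ``using the Remark~\ref{deriv-3} and the standard proofs of ordinary differential calculus we obtain the following result.'' You have carried out precisely those standard proofs, with appropriate care taken over the ultra-metric estimates from Corollary~\ref{norm} and the use of $\dot\beta_{\Vert x-x_0\Vert}$ in place of $(x-x_0)$; your write-up is in fact considerably more detailed than what the paper provides.
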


 The Proposition \ref{deriv-6} tell us that our notion of derivations
 satisfies the usual properties of the derivation of ordinary
 differential calculus. 

 Next, we introduce the differentiability in functions with more than one variable.

 \begin{defn}\label{deriv-7}
 Let $U\subset \overline{\mathbb{R}}^n$ be an open subset,
 $f:U\to\overline{\mathbb{K}}$ a function, 
 $x=(x_1,x_2,\dots,x_n),x_0=(x_{01},x_{02},\dots,x_{0n})\in U$.  
Suppose that there exists an element
 $a_i\in\overline{\mathbb{K}}$, such that 
 \begin{equation}\label{eq:deriv-6}
 \lim_{h\to 0}\frac{f(x_1,x_2,\dots,x_i+h,\dots,x_n)-f(x_{01},x_{02},
 \dots,x_{0i},\dots,x_{0n})-a_ih}{\dot{\beta}_{\Vert h\Vert}}=0.
 \end{equation}
 Then we shall define $\dfrac{\partial f}{\partial x_i}(x_0)\coloneqq
 a_i$ and call it the partial derivative of $f$ with respect to $x_i$
 in $x_0$. We shall say that $f$ is differentiable in $x_0$ if there
 exists a vector $a=(a_1,a_2,\dots, a_n)\in\overline{\mathbb{K}}^n$, such
 that 
\begin{equation}\label{eq:deriv-7}
\lim_{x\to x_0}\frac{f(x)-f(x_0)-(a_1,a_2,\dots,a_n)\cdot
   (h_1,h_2,\dots,h_n)}{\dot{\beta}_{\Vert x-x_0\Vert}}=0,
\end{equation}
 where $h_i=x_i-x_{0i}, ~i=1,2\dots,n$ are the components of the
 difference vector $h$.
 \end{defn}

 It is now standard to verify that all the known results of ordinary
 differential calculus hold also in our case. For example, if $f$ is
 differentiable in $x_0$, then it is continuous in $x_0$ and the
 $a_i$'s in the definition of $f$ being differentiable are exactly the
 partial derivatives in $x_0$. If $\mathbb{K}=\mathbb{R}$, the gradient
 of $f$ at $x_0$ is defined by the vector 
 \begin{equation}\label{eq:deriv-8}
 \nabla(f)(x_0)\coloneqq\left(\frac{\partial f}{\partial
     x_1}(x_0),\frac{\partial f}{\partial
     x_2}(x_0),\dots,\frac{\partial f}{\partial x_n}(x_0)\right), 
 \end{equation}
 where $\frac{\partial f}{\partial x_i}=a_i$, for $i=1,2,\dots,n$.

 If $U$ is an open subset of $\overline{\mathbb{R}}^n$ and
 $k\in\mathbb{N}$, we can define the set
 $$\mathcal{C}^k(U;\overline{\mathbb{K}})\coloneqq\{f:U\to\overline{\mathbb{K}}|\partial^\alpha
 f\in\mathcal{C}(U;\overline{\mathbb{K}}),~\forall~\alpha\in\mathbb{N}^n~\mbox{such
 that} ~0\le |\alpha|\le k\}$$ and 
 $\mathcal{C}^\infty(U;\overline{\mathbb{K}})\coloneqq
 \underset{k\in\mathbb{N}}{\cap}\mathcal{C}^k(U;\overline{\mathbb{K}})$.

 \begin{rem}\label{deriv-8}
 \begin{enumerate}
 \item[$1)$] Let $U\subset\overline{\mathbb{R}}^n$ be an open subset and
 $f:U\to\overline{\mathbb{R}}^m$ a function. We may write
 $f=(f_1,f_2,\dots,f_m)$, where each
 $f_i:U\to\overline{\mathbb{R}}$, $i=1,2,\dots,m$ is 
  the coordinated function of $f$. It is convenient to point out that  the differentiability of $f$ 
  at $x_0\in U$ is equivalent to   $f_i$ be  differentiable at
  $x_{0i}$, for all $i=1,...,m$, where $x_0=(x_{01},...,x_{0n})$.
 \item[$2)$] It is easy to see that $f$ is differentiable at $x_0$ if and only if  there exists
 a $\overline{\mathbb{R}}$-linear map
 $T:\overline{\mathbb{R}}^n\to\overline{\mathbb{R}}^m$, such that 
 \begin{equation}\label{eq:deriv-8}
 \lim_{x\to x_0}\frac{f(x)-f(x_0)-T(x-x_0)}{\dot{\beta}_{\Vert x-x_0\Vert}}=0.
 \end{equation}
 The map $T$ will be denoted by $D(f)(x_0)$ and 
 \begin{equation}\label{eq:deriv-9}
 J=[T]_{m\times n}=\left[\begin{array}{cccc}
 \frac{\partial f_1}{\partial x_1} & \frac{\partial f_1}{\partial x_2}
                    & \dots & \frac{\partial f_1}{\partial x_n}\\
 \frac{\partial f_2}{\partial x_1} & \frac{\partial f_2}{\partial x_2}
                    & \dots & \frac{\partial f_2}{\partial x_n}\\
 \vdots & \vdots & \ddots & \vdots\\
 \frac{\partial f_m}{\partial x_1} & \frac{\partial f_m}{\partial x_2}
                    & \dots & \frac{\partial f_m}{\partial x_n}
 \end{array}\right] 
 \end{equation}
 is the Jacobian matrix.
 \end{enumerate}
 \end{rem}

\section{Pointvalues and generalized numbers}\label{sec-3}
 Within classical distribution theory a definition of pointvalues for
 distributions was introduced in \cite{SL} and see also
 \cite{OMM}. However, this concept cannot be applied to arbitrary distributions
 at arbitrary points. Moreover, there is no way of characterizing distributions
 by their pointvalues in any way similar to classical functions. On
 the other hand, for elements of Colombeau algebras there is a very
 natural way of obtaining pointvalues by inserting
 points into representatives. The objects gained from such  operation
 are sequences of numbers and then are not values in the field $\mathbb{K}$, but 
 they are representatives of generalized numbers. Our first
 aim will be to gain an exact description of these objects. 
 In this section we take some ideas of \cite{MK} to generalize some
 results about Colombeau's simplified generalized function for
 Colombeau's full generalized function.

It is convenient to point out at this point that $\mathbb{K}$ is embedded into 
$\overline{\mathbb{K}}$ via $c\mapsto\cl[c(\varphi)]$
with $c(\varphi)=c,\,\forall\,\varphi\in\mathcal{A}_{0}(\mathbb{K})$ and 
$\overline{\mathbb{K}}$ is the natural home of pointvalues of elements
of $\mathcal{G}(\Omega)$. Note that by an analogous proof to that
given in \cite{JFC4} demonstrates that $\overline{\mathbb{K}}_{s}$ 
can be a subring of $\overline{\mathbb{K}}$. Moreover,
$\overline{\mathbb{K}}$ is the ring of constants in $\mathcal{G}(\Omega)$.

We begin this section with the following definition. 

\begin{defn}
Let $U\in\mathcal{G}(\Omega)$ and $x\in\Omega$. The pointvalue of
$U$ at $x$ is the element $cl[(u_{\varphi}(x))_{\varphi}]$ of
$\overline{\mathbb{K}}$, where $u_{\varphi}$ is one of the representatives of $U$.
\end{defn}
 
\begin{prop}
 Let $\Omega$ be a connected open subset of $\mathbb{R}^{n}$ and
 $U\in\mathcal{G}(\Omega)$. Then $\nabla U\equiv0$ if and only if
 $U\in\overline{\mathbb{K}}$.
\end{prop}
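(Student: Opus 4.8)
The plan is to dispose of the easy implication first and then reduce the hard one to a quantitative version of the classical fact that a function with vanishing gradient on a connected open set is constant. If $U\in\overline{\mathbb{K}}$, then by definition of the embedding $\overline{\mathbb{K}}\hookrightarrow\mathcal{G}(\Omega)$ the class $U$ has a representative $(a_\varphi)_\varphi$ with each $a_\varphi$ constant on $\Omega$, whence $\partial_i a_\varphi\equiv 0$ and $\partial_i U=0$ in $\mathcal{G}(\Omega)$ for every $i$, i.e. $\nabla U\equiv 0$. For the converse, read $\nabla U\equiv 0$ as $\partial_i U=0$ in $\mathcal{G}(\Omega)$ for $i=1,\dots,n$. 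Fix $x_0\in\Omega$, pick a representative $(u_\varphi)_\varphi\in\mathcal{E}_M(\Omega)$ of $U$ and put $a_\varphi\coloneqq u_\varphi(x_0)$; then $a\coloneqq\cl[(a_\varphi)_\varphi]\in\overline{\mathbb{K}}$, the net $(a_\varphi)_\varphi$ (regarded in $\mathcal{E}(\Omega)$ via constant functions) lies in $\mathcal{E}_M(\Omega)$ since $(u_\varphi)_\varphi$ is moderate on the compact $\{x_0\}$, and since $\overline{\mathbb{K}}$ is the ring of constants of $\mathcal{G}(\Omega)$ it suffices to prove $(u_\varphi-a_\varphi)_\varphi\in\mathcal{N}(\Omega)$, i.e. $U=a$ in $\mathcal{G}(\Omega)$. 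Throughout, as usual for open $\Omega$, the norms in $M)$ and $N)$ are sup-norms on compact subsets of $\Omega$, with the constants, $N$ and $\gamma$ allowed to depend on the compact set.

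Next I would verify condition $N)$ for $(u_\varphi-a_\varphi)_\varphi$, fixing a multi-index $\alpha$. If $\alpha\ne 0$, choose $i$ with $\alpha_i\ge 1$ and write $\alpha=\beta+e_i$; since $\mathcal{N}(\Omega)$ is a differential ideal and $\partial_i U=0$, one has $\partial^\alpha U=\partial^\beta(\partial_i U)=0$ in $\mathcal{G}(\Omega)$, so $(\partial^\alpha u_\varphi)_\varphi=(\partial^\alpha(u_\varphi-a_\varphi))_\varphi\in\mathcal{N}(\Omega)$, and in particular it satisfies $N)$ with multi-index $0$, which is precisely the estimate $N)$ requires of $(u_\varphi-a_\varphi)_\varphi$ at $\alpha$. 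So everything reduces to $\alpha=0$: controlling $\Vert u_{\varphi_\varepsilon}-a_\varphi\Vert_K$ on a compact $K\subset\Omega$.

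For this I would use connectedness geometrically. Given $K$, since $\Omega$ is open and connected one can pick a connected open $V$ with $K\cup\{x_0\}\subset V$ and $K'\coloneqq\overline{V}$ a compact subset of $\Omega$; the intrinsic path-metric on $V$ being finite and locally bounded, there is a constant $L=L(K)>0$, \emph{independent of $\varphi$ and $\varepsilon$}, such that (by polygonal approximation) every $x\in K$ is joined to $x_0$ by a piecewise $C^1$ path $\gamma_x$ in $V$ of length $\le L$. For fixed $\varphi,\varepsilon$ the function $u_{\varphi_\varepsilon}$ is smooth on $\Omega$, so the fundamental theorem of calculus along $\gamma_x$ gives $|u_{\varphi_\varepsilon}(x)-a_\varphi|=\bigl|\int_0^1\nabla u_{\varphi_\varepsilon}(\gamma_x(t))\cdot\gamma_x'(t)\,dt\bigr|\le L\sqrt{n}\,\max_{1\le i\le n}\sup_{K'}|\partial_i u_{\varphi_\varepsilon}|$, hence $\Vert u_{\varphi_\varepsilon}-a_\varphi\Vert_K\le L\sqrt{n}\,\max_i\sup_{K'}|\partial_i u_{\varphi_\varepsilon}|$.

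Finally I would insert $\nabla U\equiv 0$. For each $i$, applying $N)$ to $(\partial_i u_\varphi)_\varphi\in\mathcal{N}(\Omega)$ with multi-index $0$ on $K'$ yields $N_i\in\mathbb{N}$ and $\gamma_i\in\Gamma$ such that for all $q\ge N_i$ and $\varphi\in\mathcal{A}_q(\mathbb{K})$ there are $c_i(\varphi)>0$, $\eta_i(\varphi)\in I$ with $\sup_{K'}|\partial_i u_{\varphi_\varepsilon}|\le c_i(\varphi)\varepsilon^{\gamma_i(q)-N_i}$ for $\varepsilon\in I_{\eta_i(\varphi)}$. Put $N\coloneqq\max_i N_i$, $\gamma\coloneqq\min_i\gamma_i$, $c(\varphi)\coloneqq L\sqrt{n}\max_i c_i(\varphi)$ and $\eta(\varphi)\coloneqq\min_i\eta_i(\varphi)\in I$. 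One checks $\gamma\in\Gamma$ (it is strictly increasing, since if $\gamma(n)=\gamma_j(n)$ then $\gamma_i(n+1)>\gamma_i(n)\ge\gamma(n)$ for every $i$, so $\gamma(n+1)>\gamma(n)$; and it diverges as a finite minimum of divergent increasing sequences), and since $\gamma_i\ge\gamma$ pointwise and $N_i\le N$ we have $\gamma_i(q)-N_i\ge\gamma(q)-N$, so for $0<\varepsilon<1$ we obtain $\Vert u_{\varphi_\varepsilon}-a_\varphi\Vert_K\le c(\varphi)\varepsilon^{\gamma(q)-N}$ for all $q\ge N$, all $\varphi\in\mathcal{A}_q(\mathbb{K})$ and all $\varepsilon\in I_{\eta(\varphi)}$ — the $\alpha=0$ instance of $N)$ on $K$. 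Combined with the case $\alpha\ne 0$ and the moderateness of $(u_\varphi-a_\varphi)_\varphi$, this gives $(u_\varphi-a_\varphi)_\varphi\in\mathcal{N}(\Omega)$, i.e. $U=a\in\overline{\mathbb{K}}$. The step I expect to require the most care is the geometric one: securing the length bound $L$ uniformly over $x\in K$ while keeping all connecting paths inside one fixed compact subset of $\Omega$ on which the $N)$-estimates for the $\partial_i U$ are available; the rest is quantifier bookkeeping together with the observation that $\min_i\gamma_i\in\Gamma$.
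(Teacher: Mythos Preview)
Your proof is correct and follows essentially the same route as the paper: integrate $\nabla u_{\varphi_\varepsilon}$ along paths joining $x$ to a fixed $x_0$ (the paper first treats the star-shaped case with straight segments and then passes to general connected $\Omega$ via polygons, whereas you handle connectedness directly via a uniform path-length bound inside a compact $K'\subset\Omega$) to conclude that $(u_\varphi-u_\varphi(x_0))_\varphi\in\mathcal{N}(\Omega)$. Your treatment is more explicit than the paper's about the $\alpha\ne 0$ estimates and about combining the finitely many $(N_i,\gamma_i)$ into a single $(N,\gamma)\in\mathbb{N}\times\Gamma$, both of which the paper leaves implicit.
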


\begin{proof}
Evidently, $U\in\overline{\mathbb{K}}$ implies $\nabla U\equiv0$.
Conversely, let $(\nabla u_{\varphi})_{\varphi}\in\mathcal{N}(\Omega)^{n}$.
Then, we assume that $\Omega$ is star-shaped and we have that  there 
exists $p\in\mathbb{N},\gamma\in\Gamma$ and $c=c_{\varphi}>0$, such
that 
 \[
 \vert\nabla u_{\varphi_{\varepsilon}}(x)\vert\leq
 c\varepsilon^{\gamma(q)-p},\,
\forall\,\varphi\in\mathcal{A}_{q}(\mathbb{K}),\,q\geq p\]
and $\varepsilon$ sufficiently small. Hence, 

 \begin{eqnarray*}
 |u_{\varphi_{\varepsilon}}(x)-u_{\varphi_{\varepsilon}}(x_{0})| & =
 & |(x-x_{0})\intop_{0}^{1}\nabla u_{\varphi_{\varepsilon}}(x-\sigma(x-x_{0}))d\sigma|\\
  & \leq & |x-x_{0}|\intop_{0}^{1}|\nabla u_{\varphi_{\varepsilon}}(x-\sigma(x-x_{0}))|d\sigma\\
  & \leq & |x-x_{0}|c\varepsilon^{\gamma(q)-p}, \forall\,\varphi\in\mathcal{A}_{q}(\mathbb{K}),\,q\geq p
 \end{eqnarray*}
 for arbitrary $p$ and suitable $q$. Thus $((u_{\varphi})_{\varphi}-(u_{\varphi}(x_{0}))_{\varphi})\in\mathcal{N}(\mathbb{K})$.
 Now, if $\Omega$ is connected, then any point $x\in\Omega$ can be connected
 with some fixed $x_{0}\in\Omega$ by a polygon and for an analogous argument
 to the one above we get the result. 
 \end{proof}

Let $\Omega$ be an open subset of $\mathbb{K}^{n}$ and $I_{\eta}$ as
in Notation \ref{nota} in the item $a)$. Define 
 \[
 \Omega_{M}\coloneqq\{(x_{\varphi})\in\Omega^{\mathcal{A}_{0}(\mathbb{K})}:\exists\,p\in\mathbb{N}\,
 \mbox{s.t.}\,\forall\,\varphi\in\mathcal{A}_{p}(\mathbb{K}),\,\exists\,c=c_{\varphi}>0,\,
 \mbox{s.t.}\,|x_{\varphi_{\varepsilon}}|\leq c\varepsilon^{-p},\,\varepsilon\in I_{\eta}\}
 \]
 and we introduce the  equivalence relation defined by 
 \[
 (x_{\varphi})_{\varphi}\sim(y_{\varphi})_{\varphi}\Leftrightarrow\exists\,p\in\mathbb{N}\,
 \gamma\in\Gamma,\,\exists\,c=c_{\varphi}>0,\,
 \mbox{s.t.}\,|x_{\varphi_{\varepsilon}}-y_{\varphi_{\varepsilon}}|\leq c\varepsilon^{\gamma(q)-p},\,\varepsilon\in I_{\eta}
 \]
 $\forall\,\varphi\in\mathcal{A}_{q}(\mathbb{K}),\,q\geq p$.  Set
 $\tilde{\Omega}\coloneqq\Omega_{M}/\sim$. 

The set of compactly supported points is 
 \[
 \tilde{\Omega}_{c}=\{\tilde{x}\in\tilde{\Omega}:\exists\,\mbox{repres.}\,(x_{\varphi})_{\varphi},
 \,\exists\,K\subset\subset\Omega,\exists\,p\in\mathbb{N}\,\mbox{s.t.}
 \,x_{\varphi_{\varepsilon}}\in
 K,\,\forall\,\varphi\in\mathcal{A}_{p}(\mathbb{K}),
 \,\forall\,\varepsilon\in I_{\eta}\}.
 \]
 It is clear that if the $\tilde{\Omega}_{c}-$property holds for one
 representative of $\tilde{x}\in\tilde{\Omega}$, then it holds for
 every representative. Also, for $\Omega=\mathbb{K}$, we have $\tilde{\mathbb{K}}=\overline{\mathbb{K}}$.
 Thus, we have that the canonical identification $\tilde{\mathbb{K}^{n}}=\tilde{\mathbb{K}}^{n}=\overline{\mathbb{K}}^{n}$.
 For $\tilde{\mathbb{K}}_{c}$ we write $\overline{\mathbb{K}}_{c}$.

From considerations above, we have the following result.

\begin{prop}\label{domain}
 Let $U\in\mathcal{G}(\Omega)$ and $\tilde{x}\in\tilde{\Omega}_{c}$.
 Then the generalized pointvalue of $U$ at 
 $\tilde{x}=\cl[(x_{\varphi})_{\varphi}]$ is $U(\tilde{x})\coloneqq\cl[(u_{\varphi}(x_{\varphi}))_{\varphi}]$. Moreover,  it 
 is a well-defined element of $\overline{\mathbb{K}}$.
 \end{prop}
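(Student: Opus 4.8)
The plan is to verify two things: first, that the formula $U(\tilde x)\coloneqq\cl[(u_\varphi(x_\varphi))_\varphi]$ produces a genuine element of $\overline{\mathbb{K}}$, i.e.\ that $(u_\varphi(x_\varphi))_\varphi$ is a moderate net of numbers; and second, that the result does not depend on the choice of representative $(u_\varphi)_\varphi$ of $U$ nor on the choice of representative $(x_\varphi)_\varphi$ of $\tilde x$. Throughout I will exploit the hypothesis $\tilde x\in\tilde\Omega_c$: fix a representative $(x_\varphi)_\varphi$ and a compact set $K\subset\subset\Omega$ and $p_0\in\mathbb{N}$ with $x_{\varphi_\varepsilon}\in K$ for all $\varphi\in\mathcal{A}_{p_0}(\mathbb{K})$ and all $\varepsilon\in I_\eta$. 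Since the needed estimates on derivatives of $u_\varphi$ in the definition of $\mathcal{E}_M(\Omega)$ and $\mathcal{N}(\Omega)$ are norms over all of $\Omega$ (in particular uniform over the relevant compacta), compact support of the inserted points is exactly what lets those estimates survive the insertion.

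\textbf{Moderateness.} First I would show $(u_\varphi(x_\varphi))_\varphi\in\mathcal{E}_M(\mathbb{K})$ in the sense of the defining condition for $\overline{\mathbb{K}}$. Take a representative $(u_\varphi)_\varphi\in\mathcal{E}_M(\Omega)$; applying condition $(M)$ with $\alpha=0$ gives $N\in\mathbb{N}$ such that for all $\varphi\in\mathcal{A}_N(\mathbb{K})$ there are $c=c(\varphi)>0$, $\eta_1=\eta_1(\varphi)\in I$ with $\Vert u_{\varphi_\varepsilon}\Vert\le c\varepsilon^{-N}$ for $\varepsilon\in I_{\eta_1}$. Taking $p\coloneqq\max\{N,p_0\}$ and restricting to $\varphi\in\mathcal{A}_p(\mathbb{K})\subseteq\mathcal{A}_N(\mathbb{K})$, I get $|u_{\varphi_\varepsilon}(x_{\varphi_\varepsilon})|\le\Vert u_{\varphi_\varepsilon}\Vert\le c\varepsilon^{-N}\le c\varepsilon^{-p}$ for $\varepsilon$ small, since $x_{\varphi_\varepsilon}\in K\subseteq\Omega$. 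This is exactly the moderateness bound required for a representative of an element of $\overline{\mathbb{K}}$.

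\textbf{Well-definedness.} Here the argument has two independent parts. For independence of the representative of $U$: if $(u_\varphi)_\varphi,(\tilde u_\varphi)_\varphi$ are two representatives, then $(u_\varphi-\tilde u_\varphi)_\varphi\in\mathcal{N}(\Omega)$, so condition $(N)$ with $\alpha=0$ gives $N\in\mathbb{N}$ and $\gamma\in\Gamma$ with $\Vert u_{\varphi_\varepsilon}-\tilde u_{\varphi_\varepsilon}\Vert\le c\varepsilon^{\gamma(q)-N}$ for all $q\ge N$, $\varphi\in\mathcal{A}_q(\mathbb{K})$, $\varepsilon$ small; evaluating at $x_{\varphi_\varepsilon}\in K$ and taking $q\ge\max\{N,p_0\}$ gives the null estimate for $(u_\varphi(x_\varphi)-\tilde u_\varphi(x_\varphi))_\varphi$, so it represents $0$ in $\overline{\mathbb{K}}$. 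For independence of the representative of $\tilde x$: if $(x_\varphi)_\varphi\sim(y_\varphi)_\varphi$ then $|x_{\varphi_\varepsilon}-y_{\varphi_\varepsilon}|\le c\varepsilon^{\gamma(q)-p}$ for suitable $p,\gamma$; I would write, using a path in $K$ (enlarging $K$ slightly to a compact convex or star-shaped neighbourhood if necessary, as in the earlier Proposition on $\nabla U\equiv0$),
\begin{equation*}
u_{\varphi_\varepsilon}(x_{\varphi_\varepsilon})-u_{\varphi_\varepsilon}(y_{\varphi_\varepsilon})
=(x_{\varphi_\varepsilon}-y_{\varphi_\varepsilon})\intop_0^1\nabla u_{\varphi_\varepsilon}\bigl(y_{\varphi_\varepsilon}+\sigma(x_{\varphi_\varepsilon}-y_{\varphi_\varepsilon})\bigr)\,d\sigma,
\end{equation*}
and bound the right-hand side by $|x_{\varphi_\varepsilon}-y_{\varphi_\varepsilon}|\cdot\Vert\nabla u_{\varphi_\varepsilon}\Vert$; the first factor is $O(\varepsilon^{\gamma(q)-p})$ and the second is $O(\varepsilon^{-N'})$ by moderateness, so the product again satisfies a null estimate of the required form (after possibly passing to a finer $\gamma$, which exists since for $q$ large $\gamma(q)$ beats any fixed polynomial loss). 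Both representatives $(x_\varphi)_\varphi$ and $(y_\varphi)_\varphi$ being compactly supported, one can arrange a single compact $K$ containing the whole segment for $\varepsilon$ small.

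\textbf{Main obstacle.} The delicate point is the last one: making sure the segment joining $x_{\varphi_\varepsilon}$ and $y_{\varphi_\varepsilon}$ stays inside $\Omega$ (and inside a fixed compact set on which the moderateness bounds are uniform), and that the combined exponent $\gamma(q)-p-N'$ can still be absorbed into some $\tilde\gamma\in\Gamma$. Both are handled by the standard device already used in the preceding proposition — replace $\Omega$ locally by a star-shaped (or convex) relatively compact neighbourhood of $K$, connect points by polygons when $\Omega$ is merely connected, and note that since $x_{\varphi_\varepsilon}-y_{\varphi_\varepsilon}\to0$ faster than any power, for $\varepsilon$ below some threshold the segment lies in an arbitrarily small tube around $K$ — so I expect this to be routine once set up carefully rather than genuinely hard.
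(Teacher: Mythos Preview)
Your proposal is correct and follows essentially the same route as the paper: moderateness via the $\alpha=0$ bound evaluated on the compact $K$, independence of the representative of $U$ by restricting the $\mathcal{N}(\Omega)$ estimate to $K$, and independence of the representative of $\tilde x$ via the integral mean-value formula for $\nabla u$ combined with the null estimate on $|x_{\varphi_\varepsilon}-y_{\varphi_\varepsilon}|$. Two minor remarks: the $\Vert\cdot\Vert$ in conditions $(M)$ and $(N)$ is a sup over compacta, not over all of $\Omega$ (the paper's own proof writes $\sup_{x\in K}$), and there is no need to pass to a finer $\gamma$ in the last step---you simply absorb the moderateness loss $N'$ into the constant $p$, exactly as the paper does by setting $p'=p''+p'''$.
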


 \begin{proof}
 If $\tilde{x}\in\tilde{\Omega}_{c}$, then there exists $K\subset\subset\Omega,\,p\in\mathbb{N}$,
 such that $x_{\varphi_{\varepsilon}}\in K,\,\forall\,\varphi\in\mathcal{A}_{p}(\mathbb{K}),\,\forall\,\varepsilon\in I_{\eta}$.
 Since $U\in\mathcal{G}(\Omega)$ then we have that
 \[
 |u_{\varphi_{\varepsilon}}(x_{\varphi_{\varepsilon}})|\leq\sup_{x\in
   K}|u_{\varphi_{\varepsilon}}(x_{\varphi_{\varepsilon}})|
 \leq c\varepsilon^{-p},\,\forall\,\varphi\in\mathcal{A}_{p}(\mathbb{K}),\,\forall\,\varepsilon\in I_{\eta}
 \]
 and for some $c=c_{\varphi}>0$. Next we show that $\tilde{x}\sim\tilde{y}\Rightarrow U(\tilde{x})\sim U(\tilde{y})$,
 i.e., $x_{\varphi}\sim y_{\varphi}\Rightarrow u_{\varphi}(x_{\varphi})\sim u_{\varphi}(y_{\varphi})$ and we need  
  to prove  that there exists $p'\in\mathbb{N},\,\gamma\in\Gamma$
 and $c'=c'_{\varphi}>0$, such that 
 \begin{equation}\label{eq:equiva-1}
 |u_{\varphi_{\varepsilon}}(x_{\varphi_{\varepsilon}})-u_{\varphi_{\varepsilon}}(y_{\varphi_{\varepsilon}})|
 \leq c'\varepsilon^{\gamma(q)-p'},\,\forall\,\varphi\in\mathcal{A}_{q}(\mathbb{K}),\,q
 \geq p',\,\varepsilon\in I_{\eta}.
 \end{equation}
Note that 
\begin{eqnarray}\label{eq:equiva-2}
 |u_{\varphi_{\varepsilon}}(x_{\varphi_{\varepsilon}})-u_{\varphi_{\varepsilon}}(y_{\varphi_{\varepsilon}})|
 &=&|(x_{\varphi_{\varepsilon}}-y_{\varphi_{\varepsilon}}\intop_{0}^{1}\nabla
     u_{\varphi_{\varepsilon}}(x_{\varphi_{\varepsilon}}-
 \sigma(x_{\varphi_{\varepsilon}}-y_{\varphi_{\varepsilon}}))d\sigma|\nonumber\\
 &\leq&
        |x_{\varphi_{\varepsilon}}-y_{\varphi_{\varepsilon}}|\intop_{0}^{1}|
 \nabla u_{\varphi_{\varepsilon}}(x_{\varphi_{\varepsilon}}-\sigma(x_{\varphi_{\varepsilon}}-y_{\varphi_{\varepsilon}}))|d\sigma.
 \end{eqnarray}
Since $x_{\varphi}\sim y_{\varphi}$ then there exists $p''\in\mathbb{N},\,\gamma'\in\Gamma$
 and $c''=c''_{\varphi}>0$, such that 
 \begin{equation}\label{eq:equiva-3}
 |x_{\varphi_{\varepsilon}}-y_{\varphi_{\varepsilon}}|\leq
 c''\varepsilon^{\gamma'(q')-p''},\,
 \forall\,\varphi\in\mathcal{A}_{q'}(\mathbb{K}),\,q'\geq p'',\,\varepsilon\in I_{\eta}.
 \end{equation}
By the fact that  $x_{\varphi_{\varepsilon}}-\sigma(x_{\varphi_{\varepsilon}}-y_{\varphi_{\varepsilon}})$
 remains within some compact subset of $\Omega$ for $\varepsilon\in I_{\eta}$
 and $U\in\mathcal{G}(\Omega)$, we have that 
 \begin{equation}\label{eq:equiva-4}
 |\nabla u_{\varphi_{\varepsilon}}(x_{\varphi_{\varepsilon}}-\sigma(x_{\varphi_{\varepsilon}}-y_{\varphi_{\varepsilon}}))|
 \leq\sup_{x\in K}|\nabla u_{\varphi_{\varepsilon}}(x)|\leq
 c'''\varepsilon^{-p'''},\,\forall\,
 \varphi\in\mathcal{A}_{p'''}(\mathbb{K}),\,\forall\,\varepsilon\in I_{\eta}
 \end{equation}
  Replacing (\ref{eq:equiva-3}) and (\ref{eq:equiva-4}) in (\ref{eq:equiva-2}),
 we have that 
 \begin{eqnarray*}
 |u_{\varphi_{\varepsilon}}(x_{\varphi_{\varepsilon}})-u_{\varphi_{\varepsilon}}(y_{\varphi_{\varepsilon}})|
   &\leq& (c''c''')\varepsilon^{\gamma'(q')-(p''+p''')}=c'\varepsilon^{\gamma(q)-p},\,\forall\,
 \varphi\in\mathcal{A}_{q'}(\mathbb{K}),\,q'\geq(p''+p'''),\,\varepsilon\in I_{\eta},
 \end{eqnarray*}
 where $p'=p''+p''',\,\gamma(q)=\gamma'(q')$ and $c'=c''c'''$.  Thus the 
 inequality in (\ref{eq:equiva-1}) holds which implies that 
 $\tilde{x}\sim\tilde{y}\Rightarrow U(\tilde{x})\sim U(\tilde{y})$.
 Next, if $(w_{\varphi}(x_{\varphi}))_{\varphi}\in\mathcal{N}(\Omega)$,
 then $w_{\varphi}(x_{\varphi})\sim0$,  because of $\exists\,p\in\mathbb{N},\,\gamma\in\Gamma$
 and $c=c_{\varphi}>0$, such that 
 \[
 |w_{\varphi_{\epsilon}}(x_{\varphi_{\varepsilon}})|\leq
 c\varepsilon^{\gamma(q)-p},
 \,\varphi\in\mathcal{A}_{q}(\mathbb{K}),\,q\geq p,\,\varepsilon\in I_{\eta}
 \] and we have that $x_{\varphi_{\varepsilon}}$ stays within some compact subset of $\Omega$
 for $\varepsilon\in I_{\eta}$.
 \end{proof}

Next, we study when the elements in $\mathcal{G}(\Omega)$ are
identically zero, i.e, we can characterize full generalized
functions in $\tilde{\Omega}_c$ from their point values as  in classical
functions. 

\begin{thm}\label{caracter}
 If $\Omega$ is an open subset of $\mathbb{R}^n$, then \[U\equiv
 0~\mbox{in}~\mathcal{G}(\Omega)\Leftrightarrow U(\tilde{x})=0
 ~\mbox{in} ~\overline{\mathbb{K}},~\forall~\tilde{x}\in\tilde{\Omega}_c.\]
 \end{thm}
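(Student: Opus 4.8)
The plan is to treat the two implications separately. The implication $U\equiv 0\ \Rightarrow\ U(\tilde x)=0$ for all $\tilde x\in\tilde\Omega_c$ is immediate: by Proposition \ref{domain} the generalized pointvalue $U(\tilde x)$ does not depend on the representative chosen for $U$, so evaluating the zero representative gives $U(\tilde x)=\cl[(0)_\varphi]=0$ in $\overline{\mathbb K}$; this is in any case already contained in the last paragraph of the proof of Proposition \ref{domain}.

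For the converse I would argue by contraposition: assume $U\not\equiv 0$ and fix a representative $(u_\varphi)_\varphi\in\mathcal E_M(\Omega)$, so that $(u_\varphi)_\varphi\notin\mathcal N(\Omega)$; the goal is to exhibit a single point $\tilde x\in\tilde\Omega_c$ with $U(\tilde x)\neq 0$. The first step, which I expect to be the real obstacle, is to reduce the nullity condition $N)$ to its instance $\alpha=0$. I would invoke the full-algebra analogue of the classical order-zero characterization of the null ideal (obtained from a Landau--Kolmogorov/interpolation inequality applied on a compact neighbourhood of a given compact set, combined with the moderateness bounds of $(u_\varphi)_\varphi$, after rescaling the function $\gamma\in\Gamma$; cf.\ \cite{MK}): namely, $(u_\varphi)_\varphi\in\mathcal N(\Omega)$ if and only if, for every compact $K\subset\subset\Omega$, the estimate of $N)$ with $\alpha=0$ holds on $K$. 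Since $(u_\varphi)_\varphi\notin\mathcal N(\Omega)$, there is therefore a compact $K\subset\subset\Omega$ for which that order-zero estimate fails, that is, for every $N\in\mathbb N$ and every $\gamma\in\Gamma$ there exist $q\geq N$ and $\varphi\in\mathcal A_q(\mathbb K)$ such that, whatever $c>0$ and $\eta\in I$ are, one has $\sup_{x\in K}|u_{\varphi_\varepsilon}(x)|>c\,\varepsilon^{\gamma(q)-N}$ for some $\varepsilon\in I_\eta$.

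With such a $K$ fixed, no diagonalization over $(N,\gamma)$ is needed: since each $u_\varphi\in\mathscr C^\infty(\Omega)$ is continuous and $K$ is compact, for every $\varphi\in\mathcal A_0(\mathbb K)$ I would choose $x_\varphi\in K$ with $|u_\varphi(x_\varphi)|=\sup_{x\in K}|u_\varphi(x)|$. The family $(x_\varphi)_\varphi$ is bounded, hence lies in $\Omega_M$, and $\tilde x:=\cl[(x_\varphi)_\varphi]\in\tilde\Omega_c$, because $x_{\varphi_\varepsilon}\in K$ for all $\varphi$ and $\varepsilon$. By Proposition \ref{domain}, $U(\tilde x)=\cl[(u_\varphi(x_\varphi))_\varphi]$, and since $|u_{\varphi_\varepsilon}(x_{\varphi_\varepsilon})|=\sup_{x\in K}|u_{\varphi_\varepsilon}(x)|$ for all $\varphi,\varepsilon$, the family $(u_\varphi(x_\varphi))_\varphi$ would belong to $\mathcal N(\mathbb K)$ precisely when the order-zero estimate of $N)$ holds for $(u_\varphi)_\varphi$ on $K$ --- which we have just ruled out. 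Hence $U(\tilde x)\neq 0$ in $\overline{\mathbb K}$, contradicting the hypothesis; this proves the contrapositive.

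A final word on where the difficulty sits: everything except the order-zero reduction is routine --- that $U(\tilde x)$ is well defined and representative-independent is Proposition \ref{domain}, the membership $(x_\varphi)_\varphi\in\Omega_M$ is trivial from boundedness, and the last equivalence is immediate from the choice of the maximizers. In carrying out the reduction one has to check that the gain factor produced by the interpolation inequality, once absorbed into $\gamma(q)$, still yields an admissible (strictly increasing, divergent, non-negative) element of $\Gamma$ for the new estimate. This argument also explains why the statement is phrased over $\tilde\Omega_c$ rather than over all of $\tilde\Omega$: the point produced is supported in the fixed compact set $K$.
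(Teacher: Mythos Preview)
Your proposal is correct and the overall strategy matches the paper's --- both directions are argued the same way in spirit, and the converse is by contraposition, producing a compactly supported generalized point at which $U$ does not vanish. The execution, however, differs in two places.

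First, the reduction to $\alpha=0$. You outsource this to a separate lemma (the full-algebra analogue of the order-zero characterization of $\mathcal N(\Omega)$, via Landau--Kolmogorov plus moderateness), and you are right that this is where all the work sits. The paper instead proves this reduction \emph{in situ}: it picks a multi-index $\alpha$ of minimal length for which the nullity estimate fails, builds the point $\tilde x$ so that $\partial^\alpha U(\tilde x)\neq 0$, and then argues by contradiction that $\alpha\neq 0$ is impossible. The contradiction is obtained by an elementary integration argument in one coordinate (write $\partial^\beta u$ with $|\beta|=|\alpha|-1$ via an integral of $\partial^\alpha u$, use the moderateness bound on $\partial^{\beta'}u$ with $|\beta'|=|\alpha|+1$ to control the error, and force a violation of the estimate for $\partial^\beta u$), which is essentially the interpolation inequality carried out by hand. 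Your route is cleaner and more modular provided the order-zero lemma is granted; the paper's route is self-contained and avoids having to verify that the rescaled $\gamma$ stays in $\Gamma$.

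Second, the construction of $\tilde x$. You take, for every $\varphi\in\mathcal A_0(\mathbb K)$, a maximizer $x_\varphi\in K$ of $|u_\varphi(\cdot)|$; this immediately gives $|u_{\varphi_\varepsilon}(x_{\varphi_\varepsilon})|=\sup_K|u_{\varphi_\varepsilon}|$ for all $\varphi,\varepsilon$, so the failure of the $\alpha=0$ estimate on $K$ transfers verbatim to $(u_\varphi(x_\varphi))_\varphi\notin\mathcal N(\mathbb K)$. The paper instead extracts a sequence $\varepsilon_k\to 0$ and points $x_{\varphi_{\varepsilon_k}}\in K$ witnessing the failure, then extends to all $\varepsilon$ by a step function. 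Your construction is both simpler and slightly stronger (no sequence extraction, no extension), and it makes the final equivalence transparent.
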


 \begin{proof}
 $(\Rightarrow)$ We suppose that $U\equiv 0$ in $\mathcal{G}(\Omega)$
 and  we  show that $U(\tilde{x})=0$ in $\overline{\mathbb{K}}$ for
 all $\tilde{x}\in\tilde{\Omega}_c$. In fact, let 
 $u_\varphi,u_\varphi(x_\varphi)$, where $x_\varphi$ is a
 representative of $\tilde{x}$,  $u_{\varphi}$ a  representative of
 $U\in\mathcal{G}(\Omega)$ and $u_{\varphi}(x_{\varphi})$ is a representative of $U(\tilde{x})$ in
 $\overline{\mathbb{K}}$.  Since
 $u_\varphi\in\mathcal{N}(\Omega)$ and $x_\varphi$ is a representative
 of $\tilde{x}\in\tilde{\Omega}_c$,  then there exists
 $K\subset\subset\Omega\ni x_{\varphi_\varepsilon},
 ~p\in\mathbb{N},~c=c_\varphi>0$, such
 that \[|u_{\varphi_\varepsilon}(x_{\varphi_\varepsilon})|\le\sup_{x\in
   K}|u_{\varphi_\varepsilon}(x)|\le
 c\varepsilon^{\gamma(q)-p},~\forall~\varphi\in\mathcal{A}_q(\mathbb{K}),~q\ge
 p,~\varepsilon\in I_\eta\] for some $\gamma\in\Gamma$. Hence,
 $u_\varphi(x_\varphi)\in\mathcal{N}(\mathbb{K}), ~\forall~x_\varphi$
 representative of $\tilde{x}\in\tilde{\Omega}_c$,
 i.e. $U(\tilde{x})=0$ in
 $\overline{\mathbb{K}},~\forall~\tilde{x}\in\tilde{\Omega}_c$.

 $(\Leftarrow)$ If $U\ne 0$ in $\mathcal{G}(\Omega)$, then there
 exists $K\subset\subset\Omega, ~\alpha\in\mathbb{N}^n$ such that
 $\forall~p_1\in\mathbb{N},~\forall~\gamma\in\Gamma,~\exists~\varphi\in\mathcal{A}_{q_1}(\mathbb{K}),~q_1\ge
 p_1$ such that $\forall~c_1=c_{1\varphi}\ge 0$, we have that
 \begin{equation}\label{eq:equiva-5}
 \sup_{x\in K}|\partial^\alpha
 u_{\varphi_\varepsilon}(x)|>c_1\varepsilon^{\gamma(q_1)-p_1},~\forall~
 \varepsilon\in I_\eta
 \end{equation} 
 We choose $\alpha$ with the above property in such a way that
 $|\alpha|$ is minimal. Then, (\ref{eq:equiva-5}) yields the existence
 of sequences $\varepsilon_k\to 0$ and $x_{\varphi_{\varepsilon_k}}\in
 K$ such that 
 \begin{equation}\label{eq:equiva-6}
 |\partial^\alpha
 u_{\varphi_{\varepsilon_k}}(x_{\varphi_{\varepsilon_k}})|>
 c_1\varepsilon_k^{\gamma(q_1)-p_1},~\forall~k\in\mathbb{N}.
 \end{equation}
 Let
 $\varepsilon>0$ and   we set
 $x_{\varphi_\varepsilon}=x_{\varphi_{\varepsilon_k}}$ for
 $\varepsilon_{k+1}<\varepsilon<\varepsilon_k, k\in\mathbb{N}$. Then,
 $(x_\varphi)_\varphi\in\Omega_M$ and it has values in $K$. Consequently, 
 $\tilde{x}=\cl[(x_\varphi)_\varphi]$ belongs to
 $\tilde{\Omega}_c$. Also, from  the equation
 (\ref{eq:equiva-6}), we have that $\partial^\alpha U(\tilde{x})\ne 0$
 in $\overline{\mathbb{K}}$. Now, we have the following two cases:
 \begin{enumerate}
 \item[$i)$] $\alpha=0$;
 \item[$ii)$] $\alpha\ne 0$.
 \end{enumerate} 

 $i)$ If $\alpha=0$, then $U(\tilde{x})\ne 0$ in
 $\overline{\mathbb{K}}$ and the result follows.

 $ii)$ If $\alpha\ne 0$, we  show that this leads to a
 contradiction. Indeed, since
 $|\alpha|=|(\alpha_1,\alpha_2,\dots,\alpha_n)|$ was assumed to be
 minimal, then for any $\beta\in\mathbb{N}^n$ with $|\beta|=|\alpha|-1$ and
 $L\subset\subset\Omega$, we have that there exists
 $p_2\in \mathbb{N}, \gamma\in\Gamma$, such that for any
 $\varphi\in\mathcal{A}_{q_2}(\mathbb{K}),~q_2\ge p_2$ there exists
 $c_2=c_{2\varphi}>0$, such that 
 \begin{equation}\label{eq:equiva-7}
 \sup_{x\in L}|\partial^\beta u_{\varphi_\varepsilon}(x)|\le
 c_2\varepsilon^{\gamma(q_2)-p_2},~\forall ~\varepsilon\in I_\eta.
 \end{equation} 
 Now, we may assume that $\alpha_1\ne 0$ in
 $\alpha=(\alpha_1,\alpha_2,\dots,\alpha_n)$.  Let
 $\beta:=(\alpha_1-1,\alpha_2,\dots,\alpha_n),
 ~\beta'=(\alpha_1+1,\alpha_2,\dots,\alpha_n)$ and $x=(x_1,x')$ with
 $x'=(x_2,\dots,x_n)\in\mathbb{R}^{n-1}$.  Since
 $(u_\varphi)_\varphi\in\mathcal{E}_M(\Omega)$, then  we get  that there
 exists $p_3\in\mathbb{N}$, $c_3=c_{3\varphi}>0$  and  for any
 $\varphi\in\mathcal{A}_{p_3}(\mathbb{K})$ we have that  
 \begin{equation}\label{eq:equiva-8}
 \sup_{x\in L}|\partial^{\beta'}u_{\varphi_\varepsilon}(x)|\le
 c_3\varepsilon^{-p_3},~\forall ~\varepsilon\in I_\eta.
 \end{equation} 
 Choose $L\subset\subset\Omega$ such that $K\subset L^0$, where $L^0$
 denotes the interior of $L$. Then, for $k$ sufficiently large, we have
 that
 \begin{eqnarray}\label{eq:equiva-9}
 |\partial^\alpha
   u_{\varphi_{\varepsilon_k}}(y_{1\varphi_{\varepsilon_k}},x_{2\varphi_{\varepsilon_k}},\dots,x_{n\varphi_{\varepsilon_k}})|
 &=&\left|\partial^\alpha
     u_{\varphi_{\varepsilon_k}}(x_{\varphi_{\varepsilon_k}})+
 \intop_{x_{1\varphi_{\varepsilon_k}}}^{y_{1\varphi_{\varepsilon_k}}}
 \partial^{\beta'}u_{\varphi_{\varepsilon_k}}(\xi,x'_{\varphi_{\varepsilon_k}})d\xi\right|\nonumber\\
 &\ge&|\partial^\alpha
       u_{\varphi_{\varepsilon_k}}(x_{\varphi_{\varepsilon_k}})|-
 |y_{1\varphi_{\varepsilon_k}}-x_{1\varphi_{\varepsilon_k}}|c_3\varepsilon_k^{-p_3}\nonumber\\
 &\ge&c_1\varepsilon_k^{\gamma(q_1)-p_1}-|y_{1\varphi_{\varepsilon_k}}-x_{1\varphi_{\varepsilon_k}}|c_3\varepsilon_k^{-p_3},
 \end{eqnarray} 
   because of  the inequalities (\ref{eq:equiva-6})
 and (\ref{eq:equiva-8}). For this $k$ (sufficiently large), we have
 that $(y_{1\varphi},x_{2\varphi},\dots,x_{n\varphi})$ and
   $(x_{1\varphi},x_{2\varphi},\dots,x_{n\varphi})$ belongs $K\subset
   L^0\subset L$ and we get that 
   $(y_{1\varphi},x_{2\varphi},\dots,x_{n\varphi})\sim
   (x_{1\varphi},x_{2\varphi},\dots,x_{n\varphi})$. Consequently,  
   there exists $p_4\in\mathbb{N}$, $c_4=c_{4\varphi}>0$ with $q_3\geq p_4$, $\gamma\in\Gamma$, such that for any
   $\varphi\in\mathcal{A}_{q_3}(\mathbb{K})$  we have that 
 \begin{equation}\label{eq:equiva-10}
 |y_{1\varphi_{\varepsilon_k}}-x_{1\varphi_{\varepsilon_k}}|\le
 c_4\varepsilon_k^{\gamma(q_3)-p_4}, ~\forall~\varepsilon_k\in I_\eta.
 \end{equation} 
 Replacing the inequality (\ref{eq:equiva-10}) in
 (\ref{eq:equiva-9}), observing the signal, we obtain that 
 \begin{equation}
 |\partial^\alpha
   u_{\varphi_{\varepsilon_k}}(y_{1\varphi_{\varepsilon_k}},x_{2\varphi_{\varepsilon_k}},\dots,x_{n\varphi_{\varepsilon_k}})|\ge 
 c_1\varepsilon_k^{\gamma(q_1)-p_1}-c_4\varepsilon_k^{\gamma(q_3)-p_4}c_3\varepsilon_k^{-p_3}.
 \end{equation}
 Now, setting
 $\bar{x}_{\varphi_{\epsilon_k}}:=(x_{\varphi_{\varepsilon_k}}+c_4\varepsilon_k^{\gamma(q_3)-p_4},x'_{\varphi_{\varepsilon_k}})$,
 we have that
 \begin{eqnarray}\label{eq:equiva-11}
 |\partial^\beta u_{\varphi_{\varepsilon_k}}(\bar{x}_{\varphi_{\epsilon_k}})|&=&\left|\partial^\beta
   u_{\varphi_{\varepsilon_k}}(x_{\varphi_{\varepsilon_k}})+\intop_{x_{1\varphi_{\varepsilon_k}}}^{\bar{x}_{1\varphi_{\varepsilon_k}}}\partial^\alpha
   u_{\varphi_{\varepsilon_k}}(\xi,x'_{\varphi_{\varepsilon_k}}))d\xi\right|\nonumber\\
 &\ge&-c_2\varepsilon_k^{\gamma(q_2)-p_2}+|\bar{x}_{1\varphi_{\varepsilon_k}}-x_{1\varphi_{\varepsilon_k}}|c_1\varepsilon_k^{\gamma(q_1)-p_1},
 \end{eqnarray}
 because of  the inequalities (\ref{eq:equiva-6})
 and (\ref{eq:equiva-7}). Note that 
 \begin{equation}\label{eq:equiva-12}
 |\bar{x}_{1\varphi_{\varepsilon_k}}-x_{1\varphi_{\varepsilon_k}}|=c_4\varepsilon_k^{\gamma(q_3)-p_4},
 \end{equation}
 since  
 $\bar{x}_{\varphi_{\epsilon_{k}}}-x_{\varphi_{\varepsilon_k}}=(c_4\varepsilon_k^{\gamma(q_3)-p_4},0,\dots,0)$. 
 Replacing,  (\ref{eq:equiva-12}) in (\ref{eq:equiva-11}), we obtain
 \begin{equation}\label{eq:equiva-13}
 |\partial^\beta
 u_{\varphi_{\varepsilon_k}}(\bar{x}_{\varphi_{\epsilon_k}})|\ge
 -c_2\varepsilon_k^{\gamma(q_2)-p_2}+c_4\varepsilon_k^{\gamma(q_3)-p_4}c_1\varepsilon_k^{\gamma(q_1)-p_1}.
 \end{equation}
 Note that  for $\gamma(q_2)-p_2$ large enough  and $k>k_0$  we have that \[\sup_{x\in L}|\partial^\beta
 u_{\varphi_{\varepsilon_k}}(x)|\ge
 -c_2\varepsilon_k^{\gamma(q_2)-p_2}+c_4\varepsilon_k^{\gamma(q_3)-p_4}c_1\varepsilon_k^{\gamma(q_1)-p_1}\underset{\varepsilon_k\to
   0}{\to} 0,\] 
this is a contradiction  because of the inequality
(\ref{eq:equiva-13}). Therefore, $\alpha=0$ and the result follows.
 \end{proof}

It is convenient to point out that the Proposition \ref{domain} and 
Theorem \ref{caracter} say that the true domain of the full
generalized functions are the sets $\tilde{\Omega}_c$. 

We now extend the definition of association presented 
(\cite{JRJ},  Definition 1.3) to $\overline{\mathbb{K}}^n$: An element
 $x=(x_1,x_2,\dots,x_n)\in\overline{\mathbb{K}}^n$ is associated to
 $0=(0,0\dots,0)$, $x\approx 0$ if and only if $x_i\approx 0,~\forall~i=1,2,\dots,n$.
We say that  $x=(x_1,x_2,\dots,x_n)$, and $y=(y_1,y_2,\dots,y_n)\in\overline{\mathbb{K}}^n$
 are associated, denoted by $x\approx y$, if and only if $x_i$ and
 $y_i$ is associated in $\overline{\mathbb{K}}$ for all $1\le i\le
 n$. If there exists $a=(a_1,a_2,\dots,a_n)\in\mathbb{K}^n$ with
 $a\approx x$, i.e., $a_i\approx x_i,~\forall~i=1,2,\dots, n$, then
 $x\in\overline{\mathbb{K}}^n$ is said to be associated with $a$ and
 $a$ is so-called the shadow of $x$.

 Using the definition of the set
 $\overline{\mathbb{K}}_{s_{as}}:=\{z\in\overline{\mathbb{K}}_s:\exists~a\in\mathbb{K}~\mbox{such
   that}~z\approx a\}$ defined in the Section 2 of \cite{JO}, we can define the following 
 subset of $\overline{\mathbb{K}}$ 
\[\overline{\mathbb{K}}_{as}:=\{z\in\overline{\mathbb{K}}:
\exists~a\in\mathbb{K}~\mbox{such
   that}~z\approx a\},\] 
so-called the shadow of $\overline{\mathbb{K}}$,
see (\cite{JRJ}, Definition 1.3). Note that this is the set of all
 elements $y\in\overline{\mathbb{K}}$ such that there exists
 $p\in\mathbb{N}$ with \[\lim_{\varepsilon\downarrow
   0}\hat{y}(\varphi_{\varepsilon})=0,~\forall~\varphi\in\mathcal{A}_p(\mathbb{K})\]
 exists for some, and hence all representative $\hat{y}$ of $y$. It is
 easy to see that $\overline{\mathbb{K}}_{as}$ is in fact a subalgebra
 of $\overline{\mathbb{K}}$. Let
 $\alpha:\overline{\mathbb{K}}_{as}\to\mathbb{K}$ be defined by
 $y\mapsto\alpha(y):=\lim\limits_{\varepsilon\downarrow
   0}\hat{y}(\varphi_\varepsilon)$ for some
 $p\in\mathbb{N},~\forall~\varphi\in\mathcal{A}_p(\mathbb{K})$. 
Then it is easy to see that $\alpha$ is a $\mathbb{K}$-algebra 
surjective homomorphism. We shall denote by $\overline{\mathbb{K}}_0$ 
its kernel which is the subring of $\overline{\mathbb{K}}$ of the 
elements associated to zero, i.e., for all $\hat{y}$ representative of
$y$ there exists $p\in\mathbb{N}$, such that \[\lim_{\varepsilon\downarrow
   0}\hat{y}(\varphi_\varepsilon)=0,~\forall~\varphi\in\mathcal{A}_p(\mathbb{K}),\]
see (\cite{JRJ}, Definition 1.3). Thus we extend the (\cite{JO}, Proposition
 2.15) in the next result which is an immediate consequence of Lemma
 \ref{lema2.1}, item $ii)$ in the end of Section \ref{sec-init}.

 \begin{prop}\label{homom-1}
 \begin{enumerate}
 \item[$a)$] The algebra homomorphism $\alpha$ defined above is
 continuous for the induced topology on $\overline{\mathbb{K}}_{as}$.
 \item[$b)$] $\overline{\mathbb{K}}_{as}$ and $\overline{\mathbb{K}}_0$
   are open subalgebras of $\overline{\mathbb{K}}$ containing $B_1$.
 \end{enumerate}
 \end{prop}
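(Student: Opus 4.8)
The plan is to verify both assertions directly from the definitions of $\overline{\mathbb{K}}_{as}$ and $\overline{\mathbb{K}}_0$ together with the ultrametric structure recorded in Corollary~\ref{norm} and Lemma~\ref{lema2.1}.

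\textit{Part $a)$.} I would show that $\alpha$ is continuous at $0$; since $\alpha$ is an algebra homomorphism between (additive) topological groups, continuity everywhere then follows from translation invariance of $D$. So let $y\in\overline{\mathbb{K}}_{as}$ with $\|y\|$ small, say $\|y\|<1$, i.e. $y\in B_1(0)$. By Lemma~\ref{lema2.1}, item $ii)$, $y\in B_1(0)$ implies $y\approx 0$, hence $\alpha(y)=\lim_{\varepsilon\downarrow 0}\hat{y}(\varphi_\varepsilon)=0$ for a suitable $p$ and all $\varphi\in\mathcal{A}_p(\mathbb{K})$. Thus $\alpha\big(B_1(0)\cap\overline{\mathbb{K}}_{as}\big)=\{0\}$, and since $\mathbb{K}$ carries the discrete-on-nonzero metric from Corollary~\ref{norm}, items $v)$ and $vi)$ (so that $\{0\}$ is open in $\mathbb{K}$), the preimage of any neighbourhood of $0$ in $\mathbb{K}$ contains $B_1(0)\cap\overline{\mathbb{K}}_{as}$. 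This gives continuity of $\alpha$ at $0$, and then at every point $y_0$ by writing $\alpha(y)-\alpha(y_0)=\alpha(y-y_0)$ and using that $D$ is translation invariant.

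\textit{Part $b)$.} First, $\overline{\mathbb{K}}_{as}$ and $\overline{\mathbb{K}}_0$ are subalgebras: for $\overline{\mathbb{K}}_0$ this is built into the definition (it is the kernel of $\alpha$), and for $\overline{\mathbb{K}}_{as}$ it is noted just above the statement. To see $B_1=B_1(0)\subseteq\overline{\mathbb{K}}_0\subseteq\overline{\mathbb{K}}_{as}$, apply Lemma~\ref{lema2.1}, item $ii)$ again: every $y\in B_1(0)$ satisfies $y\approx 0$, hence $y\in\overline{\mathbb{K}}_{as}$ with $\alpha(y)=0$, i.e. $y\in\overline{\mathbb{K}}_0$. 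For openness, I use that an ultrametric ball is open and that $B_1(0)$ is an additive subgroup (by the ultrametric inequality in Corollary~\ref{norm}, item $i)$): given any $y\in\overline{\mathbb{K}}_0$, the translate $y+B_1(0)$ is an open neighbourhood of $y$, and for $z\in y+B_1(0)$ we have $z-y\in B_1(0)\subseteq\overline{\mathbb{K}}_0$, whence $z=y+(z-y)\in\overline{\mathbb{K}}_0$ since $\overline{\mathbb{K}}_0$ is closed under addition; thus $y+B_1(0)\subseteq\overline{\mathbb{K}}_0$ and $\overline{\mathbb{K}}_0$ is open. The identical argument with $\overline{\mathbb{K}}_{as}$ in place of $\overline{\mathbb{K}}_0$ shows $\overline{\mathbb{K}}_{as}$ is open.

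The main obstacle, such as it is, is bookkeeping rather than depth: one must be careful that the parameter $p\in\mathbb{N}$ attached to $y-y_0$ in the association relation is independent of representative (this is guaranteed by the ``some, hence all'' clause in the definition of $\approx$), and that the discreteness of $\mathbb{K}$ as a metric space is genuinely what makes $\alpha^{-1}(\{0\})$ a neighbourhood — so the proof really hinges on Corollary~\ref{norm}, items $v)$ and $vi)$, identifying the subspace topology of $\mathbb{K}\subseteq\overline{\mathbb{K}}$ as discrete. Once that is observed, everything reduces to the single fact $B_1(0)\subseteq\overline{\mathbb{K}}_0$ from Lemma~\ref{lema2.1}, item $ii)$, plus the subgroup property of ultrametric balls.
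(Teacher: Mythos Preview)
Your proof is correct and is precisely the route the paper indicates: the paper offers no proof beyond remarking that the result is an immediate consequence of Lemma~\ref{lema2.1}, item $ii)$, and your argument is a careful unpacking of exactly that implication via $B_1(0)\subseteq\overline{\mathbb{K}}_0$, translation invariance of $D$, and the additive-subgroup property of $B_1(0)$. One cosmetic remark: in part $a)$ you do not actually need the discreteness of $\mathbb{K}$ under the induced metric, since once $\alpha(B_1(0)\cap\overline{\mathbb{K}}_{as})=\{0\}$ the preimage of any neighbourhood of $0$ in $\mathbb{K}$ (with its usual topology) already contains $B_1(0)\cap\overline{\mathbb{K}}_{as}$.
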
    

 We finish this section with the following:

 \begin{prop}\label{include}
 The following assertions hold.
 \begin{enumerate}
 \item[$i)$] $B_1(x)\subset\tilde{\Omega}_c\subset
   B_1'(0),~\forall~x\in\tilde{\Omega}_c$;
 \item[$ii)$] $\tilde{\Omega}_c$ is an open subset of
   $\overline{\mathbb{K}}^n$;
 \item[$iii)$] If $x_0\in\Omega$ and $V=\Omega\setminus\{x_0\}$, then
   $\tilde{V}_c\subset\tilde{\Omega}_c\setminus B_1(x_0)\subset S_1(x_0)$.
 \end{enumerate}
 \end{prop}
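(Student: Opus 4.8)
The plan is to deduce everything from two elementary facts relating the sharp norm of a generalized point to the size of its representatives, together with the key inclusion $B_1(x)\subseteq\tilde{\Omega}_c$. \emph{Fact (A):} if $w\in\overline{\mathbb{K}}^n$ has a representative $(w_\varphi)_\varphi$ that is bounded, i.e. $|w_{\varphi_\varepsilon}|\le M$ for all $\varphi$ in some $\mathcal{A}_p(\mathbb{K})$ and all sufficiently small $\varepsilon$, then $\Vert w\Vert\le 1$; indeed, for each coordinate $w_i$ and every $r<0$ one has $\varepsilon^{-r}w_{i\varphi_\varepsilon}=\varepsilon^{|r|}w_{i\varphi_\varepsilon}\to 0$ as $\varepsilon\downarrow 0$, hence $r\in A(w_i)$ by the description of $A(x)$ recalled in Section~\ref{sec-init}, so $V(w_i)\ge 0$ and $\Vert w_i\Vert=e^{-V(w_i)}\le 1$. \emph{Fact (B):} if $\Vert w\Vert<1$, then there are $r>0$, $p\in\mathbb{N}$ and $C>0$ with $|w_{\varphi_\varepsilon}|\le C\varepsilon^r$ for all $\varphi\in\mathcal{A}_p(\mathbb{K})$ and all sufficiently small $\varepsilon$; this follows by applying Lemma~\ref{lema2.1}~$i)$ to each coordinate (so $V(w_i)>0$), choosing $r_i>0$ in $A(w_i)$, rewriting $\dot{\alpha}_{-r_i}w_i\approx 0$ as $|w_{i\varphi_\varepsilon}|\le\varepsilon^{r_i}$ for $\varepsilon$ small, and then taking $r=\min_i r_i$ together with a common index $p$.

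For item $i)$, the inclusion $\tilde{\Omega}_c\subseteq B_1'(0)$ is immediate from Fact~(A), since any representative of $\tilde{x}\in\tilde{\Omega}_c$ eventually takes values in a compact set $K\subset\subset\Omega$ and is therefore bounded. For $B_1(x)\subseteq\tilde{\Omega}_c$ when $x\in\tilde{\Omega}_c$, fix such a $K$ and choose $\delta_0>0$ so small that the closed $\delta_0$-neighbourhood $K'$ of $K$ is a compact subset of $\Omega$ (possible because $\operatorname{dist}(K,\mathbb{K}^n\setminus\Omega)>0$, the case $\Omega=\mathbb{K}^n$ being trivial). Given $y\in B_1(x)$, apply Fact~(B) to $w=y-x$: for $\varphi$ in a suitable $\mathcal{A}_p(\mathbb{K})$ and $\varepsilon$ small one has $|y_{\varphi_\varepsilon}-x_{\varphi_\varepsilon}|\le C\varepsilon^r<\delta_0$ while $x_{\varphi_\varepsilon}\in K$, so $y_{\varphi_\varepsilon}\in K'$; moderation of $(y_\varphi)_\varphi$ being automatic from boundedness, this shows $y\in\tilde{\Omega}_c$. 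Item $ii)$ follows at once: by item $i)$ every point of $\tilde{\Omega}_c$ is interior, hence $\tilde{\Omega}_c$ is open in $\overline{\mathbb{K}}^n$.

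For item $iii)$, the inclusion $\tilde{V}_c\subseteq\tilde{\Omega}_c$ is clear, since a compact subset of $V=\Omega\setminus\{x_0\}$ is a compact subset of $\Omega$. To see $\tilde{V}_c\cap B_1(x_0)=\emptyset$, suppose $\tilde{y}\in\tilde{V}_c$ has a representative valued in a compact $K\subset\subset V$ and assume $\Vert\tilde{y}-x_0\Vert<1$; Fact~(B) gives $|y_{\varphi_\varepsilon}-x_0|\le C\varepsilon^r\to 0$, which contradicts $|y_{\varphi_\varepsilon}-x_0|\ge\operatorname{dist}(x_0,K)>0$ (valid since $x_0\notin K$ and $K$ is compact) once $\varphi$ lies in a sufficiently high $\mathcal{A}_p(\mathbb{K})$ and $\varepsilon$ is small. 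Thus $\tilde{V}_c\subseteq\tilde{\Omega}_c\setminus B_1(x_0)$. Finally, for any $\tilde{z}\in\tilde{\Omega}_c$ whose representative is valued in a compact $K\subset\subset\Omega$, the family $(z_{\varphi_\varepsilon}-x_0)$ is bounded, so Fact~(A) yields $\Vert\tilde{z}-x_0\Vert\le 1$; hence $\tilde{\Omega}_c\subseteq B_1'(x_0)$, and therefore $\tilde{\Omega}_c\setminus B_1(x_0)\subseteq B_1'(x_0)\setminus B_1(x_0)=S_1(x_0)$.

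The quantifier bookkeeping (thresholds in $\varepsilon$ depending on $\varphi$, passage to a common index $\mathcal{A}_p(\mathbb{K})$) is routine and of the same type as in the proofs of Proposition~\ref{domain} and Theorem~\ref{caracter}. The one step that needs care is the thickening in item $i)$: one must enlarge $K$ to a compact $K'\subset\Omega$ \emph{before} invoking Fact~(B), so that the polynomial smallness of $|y_{\varphi_\varepsilon}-x_{\varphi_\varepsilon}|$ genuinely forces $y_{\varphi_\varepsilon}$ to stay inside $\Omega$; apart from that, the argument is a direct dictionary between the sharp norm and the decay rate of representatives.
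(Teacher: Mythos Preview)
Your proof is correct and follows essentially the same line as the paper's own argument. The paper invokes Proposition~\ref{homom-1}~$b)$ (hence Lemma~\ref{lema2.1}) to deduce $(y-x)\approx 0$ from $\Vert y-x\Vert<1$, then passes to a larger compact $L\supset K$ inside $\Omega$; you make this passage more explicit via your Fact~(B) (polynomial decay $|y_{\varphi_\varepsilon}-x_{\varphi_\varepsilon}|\le C\varepsilon^r$) and the $\delta_0$-thickening $K'$, which is a slight strengthening but not a different idea. Your treatment of item~$iii)$ is in fact cleaner than the paper's, which contains a minor slip (it writes ``$\Vert x-x_0\Vert\le 1$'' where the argument from $|x_{\varphi_\varepsilon}-x_0|>r>0$ actually yields $\Vert x-x_0\Vert\ge 1$, i.e.\ $x\notin B_1(x_0)$); your contradiction argument via Fact~(B) avoids this. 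One small remark: strictly speaking, to conclude $y\in\tilde{\Omega}_c$ you should note that the representative of $y$ can be modified on the ``bad'' indices $(\varphi,\varepsilon)$ to take values in $K'$ without changing its class, so that it genuinely lies in $\Omega_M$; this is routine and implicit in the paper as well.
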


 \begin{proof}
 $i)$ Let $x\in\tilde{\Omega}_c$ and take $y\in\overline{\mathbb{K}}^n$
 such that $\Vert y-x\Vert<1$. Then $(y-x)\in B_1$ and from Proposition \ref{homom-1} item $b)$  
 we have that $(x-y)\approx 0$, i.e., there exists $p\in\mathbb{N}$
 such that \[\lim_{\varepsilon\downarrow
   0}(\hat{x}-\hat{y})(\varphi_\varepsilon)=0,~\forall~\varphi\in\mathcal{A}_p(\mathbb{K}).\]
 Thus, if  $K\subset\subset\Omega$ is such that $x_{\varphi_\varepsilon}\in
 K$, then there exists a compact subset $L$ such that $K\subset L$ and
 for $\varepsilon$ sufficiently small. Thus, $y_{\varphi_\varepsilon}\in L$
 and we have that  $y\in\tilde{\Omega}_c$. Note that $(x_{\varphi_\varepsilon})$
 is a bounded sequence and we get that $\Vert x\Vert\le 1$. We easily obtain that $ii)$ from $i)$.
 For $iii)$ we take $x\in\tilde{V}_c$ and let $K\subset\subset
 \Omega\setminus\{x_0\}$ with $x_{\varphi_\varepsilon}\in K$ for
 $\varepsilon$ sufficiently small. Then there exists
 $r\in\mathbb{R}_+^*$ such that $\vert x-x_0\vert>r$ for $\varepsilon$
 sufficiently small. Hence, $\Vert x-x_0\Vert\le 1$, i.e.,
 $x\in\tilde{\Omega}_c\setminus B_1(x_0)$. From $i)$, we obtain that
 $\tilde{\Omega}_c\setminus B_1(x_0)\subset S_1(x_0)$. 
 \end{proof}

\section{Colombeau's full diferential algebra 
$\kappa(\mathcal{G}(\Omega))$}\label{sec-4}

 In this section, we show that the Fundamental  
 Theorem of Calculus holds in this framework. 
 Moreover, we show that the embedding theorem 
 and the open mapping theorem hold. Indeed, we 
 shall extend the main results  presented in \cite{OJR} 
 that contains a proposal of a new differential 
 calculus.

 From now on  Colombeau's full generalized functions, 
 $\mathcal{G}(\Omega)$, will be endowed with the sharp
 topology, see \cite{JFO,AGJ} and \cite{JJO}. Moreover, let 
 $\mathscr{C}^1(\tilde{\Omega}_c,\overline{\mathbb{K}})$ with the
 topology of pointwise convergence, i.e., $f_n\underset{n\to\infty}{\to} f$ 
 if and only if $f_n(x)\underset{n\to\infty}{\to} f(x),~\forall~x\in\tilde{\Omega}_c$. 
 
 Now, we state the main result of this section which is a
 generalization of (\cite{OJR}, Theorem 4.1)

 \begin{thm}[Embedding Theorem]\label{embedding} 
Let $\Omega$ be an open subset of $\mathbb{R}^n$. The function
   $\kappa:\mathcal{G}(\Omega)\to\mathscr{C}^1(\tilde{\Omega}_c,
 \overline{\mathbb{K}})$ defined by $\kappa(f)(x)=f(x)$ 
   is an injective homomorphism of
   $\overline{\mathbb{K}}$-algebras. Moreover, $\kappa$ is continuous
   and $$\kappa\left(\frac{\partial f}{\partial
       x_i}\right)=\frac{\partial (\kappa(f))}{\partial
     x_i},~\forall~f\in\mathcal{G}(\Omega)~\mbox{and}~1\le i\le n.$$
 \end{thm}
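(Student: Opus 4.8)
The plan is to establish the four assertions of the theorem in the natural order: (1) $\kappa$ is well-defined, (2) $\kappa$ is an $\overline{\mathbb{K}}$-algebra homomorphism, (3) $\kappa$ is injective, (4) $\kappa$ is continuous and commutes with partial derivatives.

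For \textbf{(1)}, I would first argue that for $f\in\mathcal{G}(\Omega)$ the map $x\mapsto f(x)$ really lands in $\mathscr{C}^1(\tilde{\Omega}_c,\overline{\mathbb{K}})$. That $f(x)\in\overline{\mathbb{K}}$ for $x\in\tilde{\Omega}_c$ is Proposition \ref{domain}. The content is that $x\mapsto f(x)$ is differentiable on $\tilde{\Omega}_c$ in the sense of Definition \ref{deriv-7}, with partial derivatives given by the pointvalues of $\partial f/\partial x_i\in\mathcal{G}(\Omega)$, and that these are continuous. The key estimate is a first-order Taylor expansion with integral remainder applied to a representative $u_{\varphi_\varepsilon}$: for $x$ near $x_0$ in $\tilde{\Omega}_c$ (so both stay in a common compact $K\subset\subset\Omega$ for small $\varepsilon$, using Proposition \ref{include}), one writes
\[
u_{\varphi_\varepsilon}(x_{\varphi_\varepsilon})-u_{\varphi_\varepsilon}(x_{0\varphi_\varepsilon})-\nabla u_{\varphi_\varepsilon}(x_{0\varphi_\varepsilon})\cdot(x_{\varphi_\varepsilon}-x_{0\varphi_\varepsilon})=\intop_0^1\big(\nabla u_{\varphi_\varepsilon}(x_{0\varphi_\varepsilon}+\sigma h_\varphi)-\nabla u_{\varphi_\varepsilon}(x_{0\varphi_\varepsilon})\big)\cdot h_\varphi\,d\sigma,
\]
then bounds the remainder by $\sup_{x\in K}|\partial^2 u_{\varphi_\varepsilon}(x)|\cdot|h_\varphi|^2\le c\varepsilon^{-N}\|x-x_0\|^2$ (moderateness); dividing by $\dot\beta_{\|x-x_0\|}$ and using $\|\dot\beta_r\|=r$ together with $\|x-x_0\|\to 0$ gives the limit in \eqref{eq:deriv-7}. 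Continuity of $x\mapsto\partial^\alpha f(x)$ follows from the same type of estimate. I would also note that the pointvalue of a representative $\partial^\alpha u_\varphi$ is computed exactly as in Proposition \ref{domain}, so $\kappa(\partial f/\partial x_i)(x)=(\partial f/\partial x_i)(x)$, which already delivers the displayed commutation formula once differentiability is in hand.

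For \textbf{(2)} and \textbf{(3)}: the homomorphism property is immediate because pointvalues are defined representative-wise with pointwise operations, and $\overline{\mathbb{K}}$-linearity is likewise clear from the embedding $\overline{\mathbb{K}}\hookrightarrow\mathcal{G}(\Omega)$ as constants. Injectivity is exactly Theorem \ref{caracter}: if $\kappa(f)=0$ then $f(\tilde{x})=0$ for all $\tilde{x}\in\tilde{\Omega}_c$, hence $f\equiv 0$ in $\mathcal{G}(\Omega)$. For continuity of $\kappa$ with respect to the sharp topology on the source and pointwise convergence on the target, I would fix $\tilde{x}\in\tilde{\Omega}_c$ and show $f\mapsto f(\tilde{x})$ is continuous $\mathcal{G}(\Omega)\to\overline{\mathbb{K}}$: if $f_k\to f$ sharply then for any $r$ there is $k_0$ with $D(f_k,f)\le e^{-r}$ for $k\ge k_0$, meaning a representative of $f_k-f$ is bounded by $c\varepsilon^{r'}$ on compacts (for $r'$ as large as we like), and since $\tilde{x}$ keeps its representative in a fixed compact $K$, $|u_{k\varphi_\varepsilon}(x_{\varphi_\varepsilon})-u_{\varphi_\varepsilon}(x_{\varphi_\varepsilon})|\le\sup_K|\cdots|\le c\varepsilon^{r'}$, so $V(f_k(\tilde{x})-f(\tilde{x}))$ is large, i.e. $f_k(\tilde{x})\to f(\tilde{x})$ in $\overline{\mathbb{K}}$.

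The \textbf{main obstacle} is the bookkeeping of the $\mathcal{A}_q(\mathbb{K})$-indexed estimates in step (1): unlike the simplified algebra, moderateness and nullity here carry the extra quantifier "$\exists N$ such that $\forall\varphi\in\mathcal{A}_N$" plus the $\gamma(q)$ gains, so one must be careful that the compact sets $K$ and the orders $N$ chosen from moderateness of $u_\varphi$, $\nabla u_\varphi$ and $\partial^2 u_\varphi$ can be taken uniformly (take the max), and that passing to the quotient the limit \eqref{eq:deriv-7} is genuinely independent of the representative — this last point uses that if $u_\varphi\in\mathcal{N}(\Omega)$ then $u_\varphi(x_\varphi)\in\mathcal{N}(\mathbb{K})$, exactly as in the last paragraph of the proof of Proposition \ref{domain}. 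Once the Taylor-remainder estimate is set up with these quantifiers handled, the remaining steps are routine translations of the classical arguments and of the corresponding results in \cite{OJR}.
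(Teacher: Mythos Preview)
Your proposal is correct and follows the same overall structure as the paper's proof: the homomorphism property by direct computation on pointvalues, injectivity from Theorem~\ref{caracter}, continuity at $0$ via the compact-support property of points in $\tilde\Omega_c$, and then differentiability reduced first to the one-variable case. Your Taylor-remainder estimate for the differentiability step is in fact more explicit than the paper's argument, which identifies the candidate derivative along the sequence $x_0+\dot\alpha_n$ and then appeals (somewhat elliptically) to Remark~\ref{deriv-3}-$b)$ to conclude the limit~\eqref{eq:deriv-1}; your second-order bound is exactly what is needed to make that step rigorous.
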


 \begin{proof}
 We claim that $\kappa$ is a homomorphism of $\overline{\mathbb{K}}$-algebras. In fact,
 if $f,g\in\mathcal{G}(\Omega)$ and $a\in\overline{\mathbb{K}}$, then
 $\kappa(af)(x)=(af)(x)=af(x)=a\kappa(f)(x),~\forall~x\in\tilde{\Omega}_c$
 and we have that  
 $\kappa(af)=a\kappa(f)$.  For each $f,g\in  G(\Omega)$ we have that 
 $\kappa(f+g)(x)=(f+g)(x)=f(x)+g(x)=\kappa(f)(x)+\kappa(g)(x)=(\kappa(f)+\kappa(g))(x),~\forall~x\in\tilde{\Omega}_c$. 
 and it follows that
 $\kappa(fg)(x)=(fg)(x)=f(x)g(x)=\kappa(f)(x)\kappa(g)(x)=(\kappa(f)\kappa(g))(x),~\forall~x\in\tilde{\Omega}_c$. 
 Note that the injectivity of $\kappa$ is immediately obtained from  Theorem
 \ref{caracter}. Since $\kappa$ is a homomorphism, then  we only need to prove the 
 continuity at zero to get the continuity of $\kappa$.  Let 
 $f_n\underset{n\to\infty}{\rightarrow} 0$ in
 $\mathcal{G}(\Omega),~x\in\tilde{\Omega}_c$ and
 $\hat{f}_n,~(x(\varphi))_\varphi$ be representatives of these
 elements. Choose an exhaution $(\Omega_m)_{m\in\mathbb{N}}$ of $\Omega$
   and fix $m_0$ such that there exists $p\in\mathbb{N}$ with
   $x(\varphi_\varepsilon)\in\Omega_{m_0},~\forall~\varphi\in\mathcal{A}_p(\mathbb{K}),~\forall~\varepsilon\in
   I_\eta$. Then,
   $\kappa(f_n)(x)=f_n(x)\underset{n\to\infty}{\rightarrow}
   0$ which implies that  $\kappa(f_n)\underset{n\to\infty}{\rightarrow} 0$ and,
    we have that  $\kappa$ is continuous at zero. It remains to show
    that $\kappa(f)$ is differentiable at $x_0\in\tilde{\Omega}_c$ and 
   prove this fact we firstly suppose that $n=1$. Let $x_0\in\tilde{\Omega}_c$
   whose support is contained in a compact subset to $K\subset\Omega$. We claim that 
    $\kappa(f)$ is differentiable at $x_0$ and 
   $D(\kappa(f))(x_0)=\kappa(f')(x_0)=f'(x_0)$ and this is equivalent to show that
\begin{equation}\label{eq:defin-1}
\lim_{x\to
   x_0}\frac{\kappa(f)(x)-\kappa(f)(x_0)-D(\kappa(f))(x_0)(x-x_0)}{\dot{\beta}_{\Vert x-x_0\Vert}}=0,
\end{equation}
  where  $x\in\tilde{\Omega}_c$ with $\Vert x-x_0\Vert<1$. In fact,
\begin{eqnarray}\label{eq:defin-2}
 D(\kappa(f))(x_0)&=&\lim_{n\to\infty}\frac{\kappa(f)(x_0+\dot{\alpha}_n)-\kappa(f)(x_0)}{\dot{\alpha}_n}\nonumber\\
 &=&\lim_{n\to\infty}\frac{f(x_0+\dot{\alpha}_n)-f(x_0)}{\dot{\alpha}_n}\nonumber\\
 &=&f'(x_0)=\kappa(f')(x_0),
\end{eqnarray}
 since  
 $\kappa(f)=f\in\mathscr{C}^1(\tilde{\Omega}_c;\overline{\mathbb{K}})$. 
 Replacing (\ref{eq:defin-2}) in (\ref{eq:defin-1}), and by the 
 definition of $\kappa$, we have that
 (\ref{eq:defin-1}) is equivalently to  
 \begin{equation}\label{eq:defin-3}
 \lim_{x\to
   x_0}\frac{f(x)-f(x_0)-\kappa(f')(x_0)(x-x_0)}{\dot{\beta}_{\Vert x-x_0\Vert}}=0,
 \end{equation}
 and again by the Remark \ref{deriv-3}-$b)$ the equality in
 (\ref{eq:defin-3}) is true for $\kappa(f)=f$. Thus, we  have 
  the existence of partial derivatives. Now, to prove the differentiability
 in the general case it is enough to repeat, for
 $\overline{\mathbb{K}}^n$ endowed with the product topology.
 \end{proof}

 Let $J$ be an open interval of $\mathbb{R}, ~f\in\mathcal{G}(J)$ and
 $a,b\in\tilde{J}_c$. We define 
 \begin{equation}\label{eq:integ-1}
 \left(\intop_a^b\kappa(f)\right)(\varphi):=\intop_{a(\varphi)}^{b(\varphi)}f(\varphi,t)\dd
 t,
 \end{equation} 
 where $(a(\varphi)),(b(\varphi))$ and $f(\varphi,\cdot)$ are
 representatives of $a,b$ and $f$, respectively,  the second
 integral is Riemann integral and 
 $\intop_a^b\kappa(f)$ is a well-defined element of
 $\overline{\mathbb{R}}$. It is easy to see that 
 \begin{enumerate}
 \item[$i)$] If $f,g\in\mathcal{G}(J),~\lambda\in\overline{\mathbb{R}}$
   and $a,b,c\in\tilde{J}_c,~a\le c\le b$,
   then \[\intop_a^b\kappa(f+\lambda
   g)=\intop_a^b\kappa(f)+\lambda\intop_a^b\kappa(g)
 \quad \mbox{and}\quad \intop_a^b\kappa(f)=\intop_a^c\kappa(f)+\intop_c^b\kappa(f);\]
 \item[$ii)$] If $a,b\in J$, then $\intop_a^b\kappa(f)=\intop_a^bf$,
   where the second integral is the integral of generalized functions
   (see, for example, \cite{MK}). 
 \end{enumerate}

 We show by using Theorem \ref{embedding}, that the Fundamental
 Theorem of Calculus  holds in this framework, i.e., if $J$ is an open
 interval of $\mathbb{R},~a\in\tilde{J}_c,~f\in\mathcal{G}(J)$ and
 $F_x$ is the function defined on $\tilde{J}_c$ by
 $F_x=\intop_a^x\kappa(f),$ then $F_x$ is a differentiable function
 and \[F_x'=D\left(\intop_a^x\kappa(f)\right)=\kappa(f),\] that is,
 $F_x'=\kappa(f)$. In fact, let $d\in J$, $(a(\varphi))$ a
 $(f(\varphi,\cdot))$ be representatives of $a$ and $f$,
 respectively. Then
 \begin{eqnarray}
 F_x(\varphi)&=&\left(\intop_a^x\kappa(f)\right)(\varphi)\nonumber\\
 &=&\intop_{a(\varphi)}^{x(\varphi)}f(\varphi,t)\dd t\nonumber\\
 &=&\intop_{a(\varphi)}^df(\varphi,t)\dd t+\intop_d^yf(\varphi,t)\dd
     t\nonumber\\
 &=&\intop_{a(\varphi)}^df(\varphi,t)\dd t+\kappa(G_y),
 \end{eqnarray} 
 where $G_y:=\intop_d^y(f(\varphi,t)\dd t$ in
 $\mathcal{G}(J)$ and $y=x(\varphi)$. Thus, \[F_x'=\left(\intop_{a(\varphi)}^df(\varphi,t)\dd
   t+\kappa(G_y)\right)'=(\kappa(G_y))'=\kappa(G_y')=\kappa(f),\]
 because $G_y'=\left(\intop_d^yf(\varphi,t)\dd t\right)'=f(\varphi,y)$
 a representative of $f\in\mathcal{G}(J)$. These results show the
 consistency of our proposal and allow one to use standard techniques
 of differential calculus.

 \begin{defn}\label{norma-1}
 Let $x\in\overline{\mathbb{K}}$ and  $\hat{x}$  one of its
 representatives. Then the function
 $|\hat{x}|:\mathcal{A}_0(\mathbb{K})\to\mathbb{R}_+$ defined  by
 $|\hat{x}|(\varphi)=|\hat{x}(\varphi)|$  rises to an element
 $|x|\in\overline{\mathbb{R}}_+$ which depends only on $x$ and it is
 called the module of $x$.
 \end{defn}

 The next lemma was called in \cite{OJR} as Generalized
 Cauchy-Schwarz inequality in the case of the simplified algebra. The
 same holds for the case of the full algebras as the next result 
 shows whose proof is analogous to the classical one. 

 \begin{lem}\label{norma-2}
 Let $x,y\in\overline{\mathbb{K}}^n$. Then $|\langle x|y\rangle|\le
 [x]_2[y]_2$, where $[\cdot]_2:=\left(\sum_{i=1}^n|\cdot_i|^2\right)^{\frac{1}{2}}$. 
 \end{lem}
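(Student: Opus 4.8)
The plan is to reduce the inequality to the ordinary Cauchy--Schwarz inequality in $\mathbb{K}^{n}$ applied pointwise to representatives, and then to transport the resulting estimate to the order of $\overline{\mathbb{R}}_{+}$. First I would fix representatives $\hat x_{1},\dots,\hat x_{n}$ and $\hat y_{1},\dots,\hat y_{n}$ of the components of $x$ and of $y$. For every $\varphi\in\mathcal{A}_{0}(\mathbb{K})$ and every $\varepsilon$, the classical Cauchy--Schwarz inequality in $\mathbb{K}^{n}$ yields
\[
\Bigl|\sum_{i=1}^{n}\hat x_{i}(\varphi_{\varepsilon})\,\overline{\hat y_{i}(\varphi_{\varepsilon})}\Bigr|
\le\Bigl(\sum_{i=1}^{n}|\hat x_{i}(\varphi_{\varepsilon})|^{2}\Bigr)^{1/2}
\Bigl(\sum_{i=1}^{n}|\hat y_{i}(\varphi_{\varepsilon})|^{2}\Bigr)^{1/2}.
\]
By Definition \ref{norma-1} and the definition of $[\,\cdot\,]_{2}$, the net on the left represents $|\langle x|y\rangle|$ while the net on the right represents $[x]_{2}[y]_{2}$. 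Since the order on $\overline{\mathbb{R}}_{+}$ is, by definition, governed by pointwise inequalities between moderate representatives (for $\varphi$ in a suitable $\mathcal{A}_{q}(\mathbb{K})$ and $\varepsilon$ small), the desired inequality $|\langle x|y\rangle|\le[x]_{2}[y]_{2}$ follows at once.

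The only point that requires genuine care --- and the one I expect to be the main, albeit modest, obstacle --- is that $[\,\cdot\,]_{2}$ really defines an element of $\overline{\mathbb{R}}_{+}$, i.e. that taking the square root of the moderate nonnegative generalized number $\sum_{i=1}^{n}|x_{i}|^{2}$ is a legitimate operation. Moderateness of $\bigl(\sqrt{\sum_{i}|\hat x_{i}(\varphi_{\varepsilon})|^{2}}\bigr)_{\varphi}$ is immediate from $\sqrt{c\varepsilon^{-N}}=\sqrt c\,\varepsilon^{-N/2}$. For independence of the choice of representatives I would use the elementary inequality $|\sqrt u-\sqrt v|\le\sqrt{|u-v|}$, valid for $u,v\ge0$: if two moderate representatives of $\sum_{i}|x_{i}|^{2}$ differ on some $I_{\eta}$ by a net bounded by $c\,\varepsilon^{\gamma(q)-N}$, then their square roots differ by at most $\sqrt c\,\varepsilon^{(\gamma(q)-N)/2}$, and since $\tfrac{\gamma}{2}\in\Gamma$ this net is again negligible. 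Hence $[x]_{2},[y]_{2}\in\overline{\mathbb{R}}_{+}$ are well defined and their product is meaningful.

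Everything else is bookkeeping already recorded in Section \ref{sec-init}: the module $|\cdot|$, the finite sum $\sum_{i}|\cdot_{i}|^{2}$ and the multiplication of $\overline{\mathbb{R}}$ are all computed representativewise, and the order of $\overline{\mathbb{R}}_{+}$ is the usual one, characterized by pointwise inequality of representatives. This is exactly why the proof is ``analogous to the classical one'': one applies the classical Cauchy--Schwarz inequality verbatim to representatives, and the only additional verification is the stability of the square root under the negligibility relation.
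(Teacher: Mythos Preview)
Your proposal is correct and is precisely the ``analogous to the classical one'' argument the paper alludes to without spelling out: the paper gives no proof beyond that remark, so your representativewise application of the usual Cauchy--Schwarz inequality is exactly what is intended. Your additional care in verifying that $[\,\cdot\,]_2$ is a well-defined element of $\overline{\mathbb{R}}_+$ via the estimate $|\sqrt{u}-\sqrt{v}|\le\sqrt{|u-v|}$ is a welcome detail that the paper leaves implicit.
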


 \begin{prop}\label{norma-3}
 Let $\Omega$ be open subset of $\mathbb{R}^n$ and
 $f\in\mathcal{G}(\Omega)$. The following assertions hold:
 \begin{enumerate}
 \item[$1)$]
   $\kappa(\mathcal{G}(\Omega))\subset\mathscr{C}(\tilde{\Omega}_c;\overline{\mathbb{K}})$
   and
   $\kappa(\mathcal{G}(\Omega))\ne\mathscr{C}(\tilde{\Omega}_c;\overline{\mathbb{K}})$;
 \item[$2)$] If $\Omega$ is connected, then for each
   $x,y\in\tilde{\Omega}_c$ there is $c\in\tilde{\Omega}_c$ such
   that \[\kappa(f)(x)-\kappa(f)(y)=\langle\nabla\kappa(f)(c)|x-y\rangle\]
   and \[|\kappa(f)(x)-\kappa(f)(y)|\le
   [\nabla\kappa(f)(c)]_2[x-y]_2;\]
 \item[$3)$] If $\omega$ is convex and $D(\kappa(f))=0$, then $f$ is a
   constant;
 \item[$4)$] If $K\subset\Omega$ is a compact subset, then
   $\kappa(f)\Big|_{\tilde{K}}$ is bounded. Moreover, if
   $0\in\Omega$, then $\kappa(f)\Big|_{\tilde{K}}\cap B_1(0)$ is
     Lipschitz function, where
     \[\tilde{K}:=\{x\in\tilde{\Omega}|~\exists ~\mbox{a
       representative}~(x(\varphi)) ~\mbox{of}~x~\mbox{such
       that}~x(\varphi)\in K,~\forall~\varphi\in\mathcal{A}_0(\mathbb{K})\};\]
 \item[$5)$] If $n=1,~m\in\mathbb{N}^*$ and $\Omega$ is convex, then
   given $x,y\in\tilde{\Omega}_c$ there is $z\in\tilde{\Omega}_c$ such
   that \[\kappa(f)(x)=\sum_{0\le j\le
     m}\frac{\kappa(f^{(j)})(y)(x-y)^j}{j!}+\frac{\kappa(f^{m+1})(z)(x-y)^{m+1}}{(m+1)!}\]
 and there exist $(x(\varphi)),(y(\varphi))$ and $(z(\varphi))$,
 representatives of $x,y$ and $z$, respectively with $x(\varphi)\le
 z(\varphi)\le y(\varphi)$;
 \item[$6)$] If $n=1,~m\in\mathbb{N}^*$ and $x\in\tilde{\Omega}_c$,
   then \[\lim_{\Vert x-y\Vert \rightarrow 0}\frac{1}{(\dot{\beta}_{\Vert x-y\Vert})^n}\left(\kappa(f)(x)-\sum_{0\le j\le
     m}\frac{\kappa(f^{(j)})(y)(x-y)^j}{j!}\right)=0.\]
 \end{enumerate}
 \end{prop}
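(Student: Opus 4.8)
The plan is to derive all six items from the Embedding Theorem \ref{embedding} by pulling each classical statement back to the level of representatives $u_{\varphi_\varepsilon}$, which are genuine $\mathscr C^\infty$ functions on $\Omega$. For \textbf{item 1} the inclusion is immediate, since by Theorem \ref{embedding} every $\kappa(f)$ lies in $\mathscr C^1(\tilde\Omega_c;\overline{\mathbb K})\subset\mathscr C(\tilde\Omega_c;\overline{\mathbb K})$; for properness I would fix a classical point $x_0\in\Omega\subset\tilde\Omega_c$ and take $g(x)\coloneqq|x-x_0|\in\overline{\mathbb R}_+\subset\overline{\mathbb K}$ in the sense of Definition \ref{norma-1}. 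This $g$ is continuous because $\Vert\,|x-x_0|-|y-x_0|\,\Vert\le\Vert x-y\Vert$, but it is not differentiable at $x_0$: testing the quotient in (\ref{eq:deriv-1}) along $x=x_0\pm\dot\alpha_n\to x_0$ (these points lie in $B_1(x_0)\subset\tilde\Omega_c$ by Proposition \ref{include}) forces, via Corollary \ref{norm} and $\dot\beta_{\Vert\dot\alpha_n\Vert}=\dot\alpha_n$, the incompatible conditions $z_0=1$ and $z_0=-1$. Hence $g\notin\mathscr C^1\supset\kappa(\mathcal G(\Omega))$.

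\textbf{Items 2 and 5} share a single engine. Choosing representatives $u_\varphi$ of $f$ and $(x(\varphi)),(y(\varphi))$ of $x,y\in\tilde\Omega_c$, for each fixed $\varphi$ and small $\varepsilon$ I would apply to the smooth function $u_{\varphi_\varepsilon}$ the classical mean value theorem (item 2), resp. Taylor's theorem with Lagrange remainder (item 5), on the segment joining $x(\varphi_\varepsilon)$ and $y(\varphi_\varepsilon)$. When $\Omega$ is convex this segment stays in $\Omega$; for merely connected $\Omega$ one first notes that $\Vert x-y\Vert<1$ implies $x\approx y$ by Lemma \ref{lema2.1} (using $\tilde\Omega_c\subset B_1'(0)$ from Proposition \ref{include}), so the representatives are eventually close enough that the segment lies in $\Omega$, the residual case being treated along a polygonal path. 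The intermediate points produced lie between the compactly supported nets $(x(\varphi))$ and $(y(\varphi))$, hence assemble into a moderate, compactly supported net, i.e. a point $c\in\tilde\Omega_c$ (resp. $z\in\tilde\Omega_c$); passing to classes and using $\kappa(\partial^\beta f)(w)=\cl[(\partial^\beta u_\varphi(w(\varphi)))_\varphi]$ together with the last identity of Theorem \ref{embedding} gives the stated equalities in $\overline{\mathbb K}$. The estimate in item 2 is then the Cauchy--Schwarz inequality of Lemma \ref{norma-2}, and the ordering $x(\varphi)\le z(\varphi)\le y(\varphi)$ in item 5 is read off the classical statement after relabelling the chosen representatives.

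\textbf{Items 3, 4 and 6} follow as corollaries. For item 3, $D(\kappa(f))=0$ gives $\kappa(\partial f/\partial x_i)=\partial(\kappa(f))/\partial x_i=0$ for every $i$ by Theorem \ref{embedding}, hence $\partial f/\partial x_i=0$ in $\mathcal G(\Omega)$ by injectivity of $\kappa$, i.e. $\nabla f\equiv0$; since a convex set is connected, the earlier proposition ($\nabla U\equiv0\Leftrightarrow U\in\overline{\mathbb K}$) forces $f\in\overline{\mathbb K}$. For item 4, if $x\in\tilde K$ then $x(\varphi_\varepsilon)\in K$ and $|u_{\varphi_\varepsilon}(x(\varphi_\varepsilon))|\le\sup_K|u_{\varphi_\varepsilon}|\le c_\varphi\varepsilon^{-N}$ with $N$ independent of $x$, so $\Vert\kappa(f)(x)\Vert\le e^{N}$; and for $x,y\in\tilde K\cap B_1(0)$ one has $\Vert x-y\Vert<1$, hence $x\approx y$, so for small $\varepsilon$ the segment $[x(\varphi_\varepsilon),y(\varphi_\varepsilon)]$ lies in a fixed $K'\subset\subset\Omega$, and the integral form of the mean value theorem together with $\sup_{K'}|\nabla u_{\varphi_\varepsilon}|\le c'_\varphi\varepsilon^{-N'}$ yields $\Vert\kappa(f)(x)-\kappa(f)(y)\Vert\le e^{N'}\Vert x-y\Vert$, the constant depending only on $f$ and $K$. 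Finally, item 6 follows from item 5: the displayed expression equals $\kappa(f^{(m+1)})(z)(x-y)^{m+1}\bigl((m+1)!\,\dot\beta_{\Vert x-y\Vert}\bigr)^{-1}$, of norm $\frac{1}{(m+1)!}\Vert\kappa(f^{(m+1)})(z)\Vert\,\Vert x-y\Vert^{m}$, and as $\Vert x-y\Vert\to0$ the point $z$ remains in a fixed compact neighbourhood of a representative of $y$, so $\Vert\kappa(f^{(m+1)})(z)\Vert$ is bounded by item 4 while $\Vert x-y\Vert^{m}\to0$.

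\textbf{Main obstacle.} The delicate point is not any single computation but the bookkeeping behind items 2, 4 and 5: one must check that the witnesses returned by the classical theorems (the intermediate points $c$, $z$) are genuinely \emph{moderate} and \emph{compactly supported} nets, so that they are legitimate elements of $\tilde\Omega_c$, and that the classical theorems may be invoked at all, i.e. that the relevant segments lie inside $\Omega$ — this is automatic for convex $\Omega$, but for merely connected $\Omega$ it requires first the implication $\Vert x-y\Vert<1\Rightarrow x\approx y$ and then, in the borderline case $\Vert x-y\Vert=1$, the replacement of a single intermediate point by a finite polygonal chain.
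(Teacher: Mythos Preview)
Your proposal is correct and follows the paper's route, which itself is just a pointer to \cite{OJR}, Proposition 4.4, plus the hints that item 1 comes from Theorem \ref{embedding} together with Example \ref{deriv-5}, the inequality in item 2 from Lemma \ref{norma-2}, item 3 from item 2, and the remaining items from the representative-level technique already used in the proof of the Embedding Theorem.

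Two small divergences are worth recording. For the properness in item 1 you exhibit the non-differentiable module $g(x)=|x-x_0|$, whereas the paper uses Example \ref{deriv-5}, the everywhere-differentiable, non-constant function with identically zero derivative; your example is more self-contained (it needs no appeal to item 3 or to the earlier $\nabla U\equiv 0$ proposition), while the paper's example establishes the sharper fact that even elements of $\mathscr C^1(\tilde\Omega_c;\overline{\mathbb K})$ can lie outside $\kappa(\mathcal G(\Omega))$. For item 3 the paper reads constancy straight off the mean-value identity of item 2 (zero gradient forces $\kappa(f)(x)=\kappa(f)(y)$ for all $x,y$, hence $f$ constant by injectivity of $\kappa$), rather than pulling each $\partial f/\partial x_i$ back through $\kappa$ and invoking the $\nabla U\equiv 0$ proposition as you do; the two routes are equivalent. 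Finally, the obstacle you single out---that the classical intermediate points must assemble into genuine elements of $\tilde\Omega_c$, and that for merely connected $\Omega$ the segment $[x(\varphi_\varepsilon),y(\varphi_\varepsilon)]$ may leave $\Omega$---is precisely the content of the ``simple adaptation'' the paper defers to \cite{OJR}; just be aware that the polygonal-chain fix you propose for the case $\|x-y\|=1$ does not literally produce a \emph{single} $c$ in the identity of item 2, so that formula is cleanest under the convexity hypothesis already imposed in items 3 and 5.
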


 \begin{proof}
 The proof  is a simple adaptation of (\cite{OJR}, Proposition 4.4). From 
 Theorem \ref{embedding} together with Example \ref{deriv-5} we get
 $1$. The second part of $2)$ follows from Lemma
 \ref{norma-2}. Assertion $3)$ follows from $2)$. The other assertions
 are proved with similar arguments used in the  proof of Theorem \ref{embedding}.
 \end{proof}

 \begin{prop}\label{norma-4}
 Let $f\in\mathscr{C}^\infty(\Omega)\subset\mathcal{G}(\Omega),
 ~U:=\kappa(f)(\tilde{\Omega}_c),~V:=f(\Omega)$
 and $$\tilde{V}_c:=\{x\in\overline{\mathbb{K}}:\exists~\mbox{repres.}~(x_\varphi)_\varphi,
 ~\exists~p\in\mathbb{N}~\mbox{s.t.}~x_{\varphi_\varepsilon}\in
 K,~\forall~\varphi\in\mathcal{A}(\mathbb{K}),~\forall~\varepsilon\in
 I_\eta\}.$$ Then the following statements hold.
 \begin{enumerate}
 \item[$a)$] $U\subset\tilde{V}_c$ and $\kappa(f)$ is a bounded
   function;
 \item[$b)$] If $f$ is an open mapping, then $U=\tilde{V}_c$ and $U$ is
   an open subset of $\tilde{K}_c$
 \end{enumerate}
 \end{prop}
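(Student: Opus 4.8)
The plan is to establish the two inclusions behind $a)$ and $b)$ directly from the description of generalized pointvalues, working throughout with the canonical representative $u_\varphi\equiv f$ of the embedded smooth function, so that $\kappa(f)(\tilde{x})=f(\tilde{x})=\cl[(f(x_{\varphi_\varepsilon}))_\varphi]$ for every $\tilde{x}=\cl[(x_\varphi)_\varphi]\in\tilde{\Omega}_c$.

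For $a)$, fix $\tilde{x}\in\tilde{\Omega}_c$ and a representative $(x_\varphi)_\varphi$ with $x_{\varphi_\varepsilon}\in K'$ for some $K'\subset\subset\Omega$, all $\varphi\in\mathcal{A}_p(\mathbb{K})$ and $\varepsilon\in I_\eta$. By continuity of $f$ the set $f(K')$ is a compact subset of $V=f(\Omega)$, and $f(x_{\varphi_\varepsilon})\in f(K')$, so the net $(f(x_\varphi))_\varphi$ exhibits $\kappa(f)(\tilde{x})$ as an element of $\tilde{V}_c$; hence $U\subset\tilde{V}_c$. Moreover $f(K')$ is bounded, say $|f(x_{\varphi_\varepsilon})|\le C$, so for each $r<0$ one has $\varepsilon^{-r}|f(x_{\varphi_\varepsilon})|\le C\varepsilon^{-r}\to0$ as $\varepsilon\downarrow0$, whence $r\in A(\kappa(f)(\tilde{x}))$; this gives $V(\kappa(f)(\tilde{x}))\ge0$ and $\Vert\kappa(f)(\tilde{x})\Vert\le1$. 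Thus $U\subset B_1'(0)$, i.e., $\kappa(f)$ is a bounded function.

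For $b)$, assume in addition that $f$ is an open mapping. I first record the topological fact that for every compact $L\subset V$ there is a compact $\widehat{L}\subset\Omega$ with $L\subset f(\widehat{L})$: choosing for each $z\in\Omega$ a compact neighborhood $M_z$ (possible since $\Omega$ is locally compact) and letting $W_z$ be the interior of $M_z$, the family $\{f(W_z):z\in\Omega\}$ is an open cover of $f(\Omega)\supset L$ because $f$ is open, so finitely many $f(W_{z_1}),\dots,f(W_{z_k})$ already cover $L$ and $\widehat{L}:=M_{z_1}\cup\dots\cup M_{z_k}$ works. Now take $y\in\tilde{V}_c$, a representative $(y_\varphi)_\varphi$ of $y$ with $y_{\varphi_\varepsilon}\in L\subset\subset V$ for $\varphi\in\mathcal{A}_p(\mathbb{K})$, $\varepsilon\in I_\eta$, the compact $\widehat{L}\subset\subset\Omega$ just produced, and (by the axiom of choice) a map $s\colon L\to\widehat{L}$ with $f\circ s=\id_L$. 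Put $x_{\varphi_\varepsilon}:=s(y_{\varphi_\varepsilon})$; this net has values in the compact set $\widehat{L}$, hence is moderate and compactly supported, so $\tilde{x}:=\cl[(x_\varphi)_\varphi]\in\tilde{\Omega}_c$ and $\kappa(f)(\tilde{x})=\cl[(f(s(y_{\varphi_\varepsilon})))_\varphi]=\cl[(y_{\varphi_\varepsilon})_\varphi]=y$. Hence $\tilde{V}_c\subset U$, and together with $a)$ we obtain $U=\tilde{V}_c$. Finally, openness of $f$ makes $V$ an open subset of $\mathbb{K}$, so Proposition \ref{include} item $ii)$ applied with $V$ in place of $\Omega$ shows that $\tilde{V}_c$, and hence $U$, is open in $\overline{\mathbb{K}}$; since $U\subset\tilde{K}_c$ this means $U$ is open in $\tilde{K}_c$ as well.

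The step I expect to be the real obstacle is the compact-preimage lemma used in $b)$: openness of $f$ is exactly what lets one pass from $L\subset f(\Omega)$ to $L\subset f(\widehat{L})$ with $\widehat{L}$ compact, and this is what forces the selection $\varepsilon\mapsto s(y_{\varphi_\varepsilon})$ to remain inside a fixed compact subset of $\Omega$, which is in turn what keeps $\cl[(s(y_\varphi))_\varphi]$ inside $\tilde{\Omega}_c$. It is worth noting that this selection needs no regularity in $\varepsilon$ --- only moderateness and compact support of the net --- so the appeal to the axiom of choice is harmless here.
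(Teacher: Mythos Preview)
Your proof is correct and follows essentially the same route as the paper's: both parts rest on the compact-preimage fact $L\subset f(\widehat{L})$ for open $f$, then a pointwise selection to build a compactly supported preimage net, with openness of $U$ coming from Proposition~\ref{include}~$ii)$. The only differences are cosmetic --- you justify the compact-preimage lemma and the selection explicitly (the paper merely asserts them), and you compute the bound $\Vert\kappa(f)(\tilde{x})\Vert\le1$ directly rather than citing Proposition~\ref{include}~$i)$.
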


 \begin{proof}
 Again, we have a little adaptation of the original proof of the
 corresponding result presented in \cite{OJR}.

 $a)$ It is immediate that $U\subset\tilde{V}_c$ and
 hence, from Proposition \ref{include} item $i), ~\kappa(f)$ is a
 bounded function. 

 $b)$ Let 
 $z\in\tilde{V}_c,~(z_\varphi)_\varphi$ a representative of $z$ and
 $K\subset\subset V$ such that $z_{\varphi_\varepsilon}\in K$ for
 $\varepsilon$ sufficiently small and
 $\forall~\varphi\in\mathcal{A}_p(\mathbb{K})$. Then  there exists an $L\subset\subset\Omega$ such that $K\subset
 f(L)$, since $f$ is an open mapping.  Hence there exists an element $x\in\tilde{\Omega}_c$, whose
 support is contained in $L$, such that
 $f(x_{\varphi_\varepsilon})=z_{\varphi_\varepsilon}$. From Proposition
 \ref{include} item $ii), ~U$ is open.  
 \end{proof}

 Notice that we do not really need that $f$ is an open mapping, what we
 actually need is that there exists an exhaustion
 $(\Omega_n)_{n\in\mathbb{N}}$ of relatively compact sets of $\Omega$
 such that $(f(\Omega_n))_{n\in\mathbb{N}}$ is an exhaustion of $\Ima(f)$.

 The following corollary is called the Open Mapping Theorem.

 \begin{cor}[Open Mapping Theorem]\label{norma-5}
 Let $f\in\mathscr{C}^\infty(\Omega)$ be an open mapping. Then for every
 open subset $W\subset\Omega$ we have that $\kappa(f)(\tilde{W}_c)$ is open.
 \end{cor}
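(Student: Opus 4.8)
The plan is to reduce the statement to Proposition~\ref{norma-4}, item $b)$, by passing to the restriction of $f$ to $W$. Fix an open subset $W\subset\Omega$. Two facts are needed: that $f|_{W}$ is again an open mapping, and that $\kappa$ is compatible with restricting the domain, in the sense that $\kappa(f)$ and $\kappa_{W}(f|_{W})$ agree on $\tilde{W}_{c}$ (here $\kappa_{W}$ denotes the embedding of Theorem~\ref{embedding} for the open set $W$).

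First I would check that $f|_{W}\colon W\to\mathbb{K}$ is open: if $A\subset W$ is open then $A$ is open in $\Omega$, so $(f|_{W})(A)=f(A)$ is open because $f$ is open. Since $W$ is an open subset of $\mathbb{R}^{n}$ and $f|_{W}\in\mathscr{C}^{\infty}(W)\subset\mathcal{G}(W)$, Proposition~\ref{norma-4} applies with $W$, $f|_{W}$ and $V:=f(W)$ in the roles of $\Omega$, $f$ and $f(\Omega)$; part $b)$ yields that $\kappa_{W}(f|_{W})(\tilde{W}_{c})=\tilde{V}_{c}$ is open in $\overline{\mathbb{K}}$ (this uses the analogue for the open set $V\subseteq\mathbb{K}$ of Proposition~\ref{include}, item $ii)$, exactly as in the proof of Proposition~\ref{norma-4}). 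Second, I would observe that for $\tilde{x}\in\tilde{W}_{c}$ with a representative $(x_{\varphi})_{\varphi}$ taking values in some $K\subset\subset W$ for small $\varepsilon$, one has $K\subset\subset\Omega$, so $\tilde{x}\in\tilde{\Omega}_{c}$; as $f$ and $f|_{W}$ coincide on $K$, a representative $u_{\varphi}$ of $f$ in $\mathcal{G}(\Omega)$ restricts to a representative of $f|_{W}$ in $\mathcal{G}(W)$, and by Proposition~\ref{domain} the pointvalues $\cl[(u_{\varphi}(x_{\varphi}))_{\varphi}]$ coincide, i.e. $\kappa(f)(\tilde{x})=\kappa_{W}(f|_{W})(\tilde{x})$. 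Combining, $\kappa(f)(\tilde{W}_{c})=\kappa_{W}(f|_{W})(\tilde{W}_{c})=\tilde{V}_{c}$ is open, which is the claim.

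The step that needs care is the compatibility of pointvalues with restriction: making rigorous that the generalized pointvalue of a full generalized function at a compactly supported point depends only on the germ of a representative along a compact neighbourhood of the set carrying the point. This is a routine but slightly fiddly verification with the definitions of $\mathcal{E}_{M}$, $\mathcal{N}$ and $\tilde{\Omega}_{c}$. Alternatively one may avoid the restriction altogether by invoking the remark following Proposition~\ref{norma-4}: taking an exhaustion $(W_{m})_{m\in\mathbb{N}}$ of $W$ by relatively compact open sets, the family $(f(W_{m}))_{m\in\mathbb{N}}$ is an exhaustion of $\Ima(f|_{W})=f(W)$ by relatively compact open sets, since $f(\overline{W_{m}})$ is compact, hence closed, so $\overline{f(W_{m})}\subseteq f(\overline{W_{m}})\subseteq f(W)$; and that is precisely the hypothesis actually used in the proof of Proposition~\ref{norma-4}, item $b)$.
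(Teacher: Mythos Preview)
Your proposal is correct and follows exactly the route the paper intends: the paper states this as a corollary of Proposition~\ref{norma-4} without giving a separate proof, and your argument is precisely the natural reduction---apply Proposition~\ref{norma-4}, item~$b)$, with $W$ in place of $\Omega$ and $f|_{W}$ in place of $f$. The compatibility of $\kappa$ with restriction and the alternative via the exhaustion remark are both sound ways to make the reduction precise.
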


 \begin{prop}\label{norma-6}
 Let $\Omega$ be connected, $f\in\mathcal{G}(\Omega)$ and suppose that
 $\Ima(\kappa(f))$ is a discret set. Then $f$ is constant.
 \end{prop}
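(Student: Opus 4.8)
The plan is to use the discreteness of $\Ima(\kappa(f))$ to show that $\kappa(f)$ is locally constant on $\tilde{\Omega}_c$, to convert this into the statement $\nabla f\equiv 0$ in $\mathcal{G}(\Omega)$, and then to invoke the characterisation of generalized constants by a vanishing gradient on a connected open set, namely the Proposition at the beginning of Section~\ref{sec-3} asserting that, for $\Omega$ connected, $\nabla U\equiv 0$ if and only if $U\in\overline{\mathbb{K}}$.

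First I would fix $x_0\in\tilde{\Omega}_c$ and set $s_0\coloneqq\kappa(f)(x_0)$. Since $\Ima(\kappa(f))$ is discrete there is $\delta>0$ with $B_\delta(s_0)\cap\Ima(\kappa(f))=\{s_0\}$, and since $\kappa(f)$ is continuous by Theorem~\ref{embedding}, there is $\rho$ with $0<\rho\le 1$ such that $\kappa(f)(B_\rho(x_0))\subseteq B_\delta(s_0)$. By Proposition~\ref{include}, item~$i)$, one has $B_\rho(x_0)\subseteq B_1(x_0)\subseteq\tilde{\Omega}_c$, so the whole ball $B_\rho(x_0)$ lies in the domain, and hence $\kappa(f)\equiv s_0$ on $B_\rho(x_0)$; that is, $\kappa(f)$ is locally constant.

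Next, $\kappa(f)$ is differentiable by Theorem~\ref{embedding}, and since it is constant on $B_\rho(x_0)$, the difference quotient in Definition~\ref{deriv-7} is $0$ whenever $\Vert h\Vert<\rho$, so the partial derivative $\dfrac{\partial(\kappa(f))}{\partial x_i}(x_0)$ equals $0$ for every $i$ and every $x_0\in\tilde{\Omega}_c$; equivalently $\nabla(\kappa(f))\equiv 0$. By the commutation formula of Theorem~\ref{embedding}, $\kappa\!\left(\dfrac{\partial f}{\partial x_i}\right)=\dfrac{\partial(\kappa(f))}{\partial x_i}\equiv 0$, and since $\kappa$ is injective (Theorem~\ref{caracter}) this forces $\dfrac{\partial f}{\partial x_i}=0$ in $\mathcal{G}(\Omega)$ for all $i$, i.e.\ $\nabla f\equiv 0$. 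As $\Omega$ is connected, the quoted Proposition of Section~\ref{sec-3} gives $f\in\overline{\mathbb{K}}$, that is, $f$ is constant.

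The one delicate point is the first step, and it is exactly where the argument departs from the classical one: the sharp topology on $\overline{\mathbb{K}}^{n}$ is totally disconnected, so a discrete continuous image cannot be excluded by a bare connectedness argument on the domain. What rescues the statement is the rigidity of $\tilde{\Omega}_c$ recorded in Proposition~\ref{include}, item~$i)$ — it contains an open ball of radius $1$ about each of its points — which together with continuity already forces local constancy; from there the differential calculus of Section~\ref{sec-2} and the gradient characterisation finish the proof. I would also note that only connectedness of $\Omega$ is used, not convexity, so Proposition~\ref{norma-3}, item~$3)$ does not suffice and one must appeal to the more general Proposition of Section~\ref{sec-3}.
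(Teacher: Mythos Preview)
Your argument is correct and follows exactly the line one would expect from the paper's reference to \cite[Proposition~4.7]{OJR}: continuity plus discreteness of the image force local constancy of $\kappa(f)$, whence all partial derivatives of $\kappa(f)$ vanish, and the commutation relation together with injectivity of $\kappa$ (Theorem~\ref{embedding}) pull this back to $\nabla f\equiv 0$ in $\mathcal{G}(\Omega)$, so that the gradient characterisation on the connected set $\Omega$ yields $f\in\overline{\mathbb{K}}$. One small remark on your commentary: local constancy needs nothing beyond continuity and discreteness of the image, so Proposition~\ref{include}\,$i)$ is not the ``rescue'' there --- the genuinely delicate passage is from local constancy on the totally disconnected space $\tilde{\Omega}_c$ to global constancy, and that is exactly what your detour through $\nabla f=0$ and connectedness of $\Omega$ accomplishes.
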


 \begin{proof}
 The prove is same that appears in (\cite{OJR} for Proposition 4.7). 
 \end{proof}

\section{Holomorphic and analytic generalized 
functions and applications}\label{sec-5}

In this section we shall define the notions of holomorphic 
and analytic functions in the framework of Colombeau's full
algebra. Here $\mathbb{K}$ shall always stand for $\mathbb{C},~\Omega$
denotes a non-void open set of
$\mathbb{C},~\mathscr{H}(\Omega)\coloneqq\{f\in\mathscr{C}^1(\Omega;\mathbb{C}):\bar{\partial}
   f=0\}$ and
   $\mathscr{H}\mathcal{G}(\Omega)\coloneqq\{f\in\mathcal{G}(\Omega;\mathbb{C}):\bar{\partial}f=0\}$. It
   is obvious that
   $\overline{\mathbb{C}}=\overline{\mathbb{R}}+i\overline{\mathbb{R}}$,
   where $i^2=-1$. Thus, we consider $\overline{\mathbb{C}}$ to be
   $\overline{\mathbb{R}}$-isomorphic to $\overline{\mathbb{R}}^2$. If
   $z=x+iy$, with $x,y\in\overline{\mathbb{R}}$, then we define the
   following operators: \[\frac{\partial}{\partial z}\coloneqq
   \frac{1}{2}(\partial_x-i\partial_y),\qquad\mbox{and}\qquad
\frac{\partial}{\partial\bar{z}}\coloneqq\frac{1}{2}(\partial_x+i\partial_y).\]

As in \cite{OJR} our Embedding Theorem  gives, in
the obvious way, an Embedding Theorem in the Complex case. We also
have an Open Mapping Theorem in this case.  

\begin{thm}\label{holom-1}
If $f\in\mathscr{H}$ is non-constant, then $(\kappa(f))(\tilde{W}_c)$
is an open subset for all sets $W\subset\Omega$.
\end{thm}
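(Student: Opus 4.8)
The plan is to deduce the statement from the Open Mapping Theorem already established as Corollary \ref{norma-5}, applied with $\mathbb{K}=\mathbb{C}$. That corollary asserts that if $g\in\mathscr{C}^\infty(\Omega)$ is an open mapping then $\kappa(g)(\tilde{W}_c)$ is open for every open $W\subset\Omega$; hence it suffices to check that a non-constant $f\in\mathscr{H}$ is (i) of class $\mathscr{C}^\infty$ on $\Omega$ and (ii) an open mapping in the ordinary sense.

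For (i): by definition $f\in\mathscr{C}^1(\Omega;\mathbb{C})$ with $\bar{\partial}f=0$, so by classical one-variable complex analysis (a $\mathscr{C}^1$ solution of $\bar{\partial}f=0$ is analytic) $f$ is real-analytic, and in particular $\mathscr{C}^\infty$, on $\Omega\subset\mathbb{R}^2$. For (ii): by the classical Open Mapping Theorem of one complex variable, on each connected component of $\Omega$ the function $f$ is either constant or open; reading the hypothesis ``non-constant'' as non-constant on \emph{every} connected component, we conclude that $f:\Omega\to\mathbb{C}$ is an open mapping. This reading is forced: were $f$ constant on some component $C$, taking $W\subset C$ would make $\kappa(f)(\tilde{W}_c)$ a singleton, and singletons are not open in $\overline{\mathbb{C}}$ for the sharp topology --- e.g.\ by Corollary \ref{norm}$(iv)$--$(v)$ every ball around a point $a$ contains the distinct points $a+\dot{\alpha}_r$ for $r$ large. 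With (i) and (ii) in hand, Corollary \ref{norma-5} yields the conclusion directly. Alternatively one may argue without invoking Corollary \ref{norma-5}: for open $W\subset\Omega$, the restriction $f|_W$ is again a smooth open mapping, Proposition \ref{norma-4}$(b)$ identifies $\kappa(f)(\tilde{W}_c)$ with $\tilde{V}_c$ where $V=f(W)$, and Proposition \ref{include}$(ii)$ shows this set is open in $\overline{\mathbb{C}}$.

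The proof thus involves no new analytic difficulty; the one delicate point is the bookkeeping of connected components in step (ii). Once ``non-constant'' is read componentwise --- equivalently, once one knows that $(f(\Omega_n))_n$ exhausts $\Ima(f)$ for some exhaustion $(\Omega_n)_n$ of $\Omega$ by relatively compact sets, as in the remark following Proposition \ref{norma-4} --- the statement is a direct transfer of the classical complex Open Mapping Theorem through $\kappa$, exactly parallel to the simplified-algebra treatment in \cite{OJR}.
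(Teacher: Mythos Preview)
Your proposal is correct and follows essentially the same route as the paper, which simply writes ``This follows at once from Proposition~\ref{norma-4}.'' You have merely spelled out what the paper leaves implicit---namely that a non-constant $f\in\mathscr{H}(\Omega)$ is $\mathscr{C}^\infty$ and open by classical one-variable complex analysis---and then invoked either Proposition~\ref{norma-4}$(b)$ or its immediate consequence Corollary~\ref{norma-5}; your remark on connected components is a reasonable clarification of a point the paper does not address.
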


\begin{proof}
This follows at once from Proposition \ref{norma-4}.
\end{proof}

\begin{defn}\label{holom-2}
 Let $f\in\mathcal{G}(\Omega)$. We shall say that $f$ is sub-linear in
 $\Omega$ if there exists a representative $\hat{f}$ of $f$ with the
 following property: for all $x\in\tilde{\Omega}_c$, there exists a
 representative $(x(\varphi))$ of $x,~k\in\mathbb{R}$, a sequence
 $(\eta_n)_{n\in\mathbb{N}}\in I=]0,1]$ and sequences
 $(C_n)_{n\in\mathbb{N}},~(p_n)_{n\in\mathbb{N}}$ in $\mathbb{R}$ such
 that $\lim\limits_{n\to\infty}(p_n+k_n)=\infty$
 and \[|\hat{f}^{(n)}(\varphi_\varepsilon,x(\varphi_\varepsilon))|\le
 C_n\varepsilon^{-p_n},~\forall~\varphi\in\mathcal{A}_{p_n}(\mathbb{K}),~\forall
 \varepsilon\in I_{\eta_n}=]0,\eta_n[.\]
\end{defn}

Note that the definition does not depend on the representative of
$f$. It is immediate to verify that the set of all sublinear functions
of $\Omega$ is a $\mathbb{K}$-subalgebra of $\mathcal{G}(\Omega)$.

\begin{exam}\label{holom-3}
If $f\in\mathscr{C}^\infty(\Omega)\subset\mathcal{G}(\Omega)$, then
$f$ is sub-linear.
\end{exam}

\begin{defn}\label{holom-4}
 Let $U\subset\overline{\mathbb{K}}$ be an open subset and $z_0\in
 U$. We say that $f:U\to\overline{\mathbb{K}}$ is analytic in $z_0$ if
 there exists a sequence 
 $(a_n)_{n\in\mathbb{N}}\in\overline{\mathbb{K}}$ and a series of the
 form $\sum\limits_{n\ge 0}a_n(z-z_0)^n$ which converges in a neighborhood of
 $z_0$  such that $f(z)=\sum\limits_{n\ge 0}a_n(z-z_0)^n$ in this
 neighborhood. Moreover, we say that $f$ is analytic if 
 $f$ is analytic for all $z_0\in U$ and we write
 $f\in\mathscr{A}\mathcal{G}(U)$.
\end{defn}

As in \cite{OJR}, in the proof of the results below we use the
 following obvious fact, which holds for general complete ultra-metric
 abelian groups $G$, but here we restrict our attention to the case
 $G=\overline{\mathbb{K}}$: If $(b_n)_{n\in\mathbb{N}}$ is a sequence
 in $\overline{\mathbb{K}}$, then $\sum\limits_nb_n$ converges if and only if 
$\lim\limits_{n\to\infty}\Vert b_n\Vert=0$.

\begin{thm}\label{holom-5}
 Let $r>0,~z_0\in\overline{\mathbb{K}}$ and $f(z)=\sum\limits_{n\ge
   0}a_n(z-z_0)^n\in\mathscr{A}\mathcal{G}(B_r(z_0))$. Then $f$ is
 differentiable and $f'(z)=\sum\limits_{n\ge
   1}na_n(z-z_0)^{n-1}$.
\end{thm}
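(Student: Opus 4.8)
The plan is to mimic the classical proof that a convergent power series is term-by-term differentiable, but carried out in the ultra-metric setting of $\overline{\mathbb{K}}$, where the crucial simplification is that convergence of a series $\sum_n b_n$ is equivalent to $\Vert b_n\Vert\to 0$ (the fact recalled just before the statement). First I would check that the formally differentiated series $g(z):=\sum_{n\ge 1}na_n(z-z_0)^{n-1}$ actually converges on $B_r(z_0)$: for $z\in B_r(z_0)$ we have $\Vert na_n(z-z_0)^{n-1}\Vert\le\Vert a_n\Vert\,\Vert z-z_0\Vert^{n-1}$ by Corollary~\ref{norm}(i) and (iii) (the integer factor $n$ is either zero or a unit in $\overline{\mathbb{K}}$, hence has norm $\le 1$), and since $\Vert a_n(z-z_0)^n\Vert\to 0$ and $\Vert z-z_0\Vert<r\le 1$ one gets $\Vert na_n(z-z_0)^{n-1}\Vert\le\Vert a_n(z-z_0)^n\Vert/\Vert z-z_0\Vert\to 0$ whenever $z\ne z_0$; a small separate remark handles $z=z_0$. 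So $g$ is a well-defined function on $B_r(z_0)$.

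Next I would fix $z_1\in B_r(z_0)$ and show $f$ is differentiable at $z_1$ with $D(f)(z_1)=g(z_1)$, using Definition~\ref{deriv-2}, i.e. I must estimate
\[
T(z)=f(z)-f(z_1)-g(z_1)(z-z_1)
\]
and prove $\lim_{z\to z_1}\Vert T(z)\Vert/\Vert z-z_1\Vert=0$ (equivalently, via Remark~\ref{deriv-3}(a), that $T(z)/\dot{\beta}_{\Vert z-z_1\Vert}\to 0$). Writing $w=z-z_0$, $w_1=z_1-z_0$, each summand contributes $a_n(w^n-w_1^n-nw_1^{n-1}(w-w_1))$, and the standard algebraic identity factors this as $a_n(w-w_1)^2 P_n(w,w_1)$ for a polynomial $P_n$ with integer coefficients whose terms are monomials in $w,w_1$ of total degree $n-2$. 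Since integer coefficients have norm $\le 1$ and $\Vert w\Vert,\Vert w_1\Vert<r\le 1$, the ultra-metric inequality gives $\Vert a_n(w-w_1)^2P_n(w,w_1)\Vert\le\Vert a_n\Vert\,\Vert z-z_1\Vert^2\,\rho^{\,n-2}$ where $\rho=\max(\Vert w\Vert,\Vert w_1\Vert)<1$; hence the $n$-th term of $T(z)/\dot{\beta}_{\Vert z-z_1\Vert}$ has norm $\le\Vert a_n\Vert\,\Vert z-z_1\Vert\,\rho^{\,n-2}$, which tends to $0$ as $n\to\infty$ uniformly for $z$ near $z_1$. Summing (legitimate because the series converges by the norm criterion, and by the ultra-metric triangle inequality $\Vert\sum\Vert\le\max\Vert\cdot\Vert$) yields $\Vert T(z)\Vert\le\Vert z-z_1\Vert\cdot\sup_{n\ge 2}(\Vert a_n\Vert\rho^{\,n-2})$, and one checks this supremum $\to 0$ as $z\to z_1$, giving the claim.

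The main obstacle is the uniformity: I must show that $\sup_{n}\Vert a_n\Vert\,\rho^{\,n-2}\to 0$ as $z\to z_1$ (so that $\Vert T(z)\Vert=o(\Vert z-z_1\Vert)$ and not merely $O(\Vert z-z_1\Vert)$). The point is that $\Vert a_n\Vert\,r^{\,n}$ is bounded (indeed $\to 0$) since $\sum a_n(z-z_0)^n$ converges for some $z$ with $\Vert z-z_0\Vert$ arbitrarily close to $r$; pick such a radius $r'<r$ with $\rho<r'$ eventually, split the supremum into finitely many small-$n$ terms (each of which $\to 0$ pointwise as $z\to z_1$, because $\rho\to\Vert w_1\Vert$ and we may even arrange $\rho<$ any prescribed bound) and a tail bounded by $\sup_{n\ge N}\Vert a_n\Vert (r')^{\,n-2}$ which is small by convergence at radius $r'$. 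Working in the non-Archimedean norm makes every ``triangle inequality'' step an honest maximum, so no geometric-series bookkeeping is needed; the only care required is organizing the $\varepsilon$–$N$ argument for this one supremum. Finally, since $z_1\in B_r(z_0)$ was arbitrary, $f$ is differentiable throughout $B_r(z_0)$ with $f'=g$, as claimed.
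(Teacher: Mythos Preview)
Your direct-estimate approach via the factorisation $w^n-w_1^n-nw_1^{\,n-1}(w-w_1)=(w-w_1)^2P_n(w,w_1)$ is sound and is a genuine alternative to the paper's argument. The paper instead shows that the partial sums of $\bigl(f(w)-f(z)-(w-z)\sum_{n\ge 1} na_n(z-z_0)^{n-1}\bigr)/\dot{\beta}_{\Vert w-z\Vert}$ converge uniformly on a punctured ball $B_s(z_0)\setminus\{z\}$ and then interchanges $\lim_{w\to z}$ with $\lim_{m\to\infty}$ using (the $\overline{\mathbb{K}}$-valued version of) Rudin's Theorem~7.11; since each finite partial sum is a polynomial difference quotient, its limit as $w\to z$ is $0$. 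Your method trades this interchange-of-limits step for an explicit algebraic identity and is arguably more self-contained.

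There is, however, a bookkeeping slip that manufactures a phantom difficulty. From your own term estimate the ultra-metric inequality gives
\[
\Bigl\Vert \frac{T(z)}{\dot{\beta}_{\Vert z-z_1\Vert}}\Bigr\Vert \;\le\; \Vert z-z_1\Vert\cdot\sup_{n\ge 2}\Vert a_n\Vert\,\rho^{\,n-2},
\]
not the bound on $\Vert T(z)\Vert$ you wrote; you dropped one factor of $\Vert z-z_1\Vert$ when passing from the terms of $T/\dot{\beta}$ to a bound on $T$. Consequently you only need the supremum to be \emph{bounded} for $z$ near $z_1$, not to tend to $0$. Boundedness is immediate: choose $s$ with $\Vert z_1-z_0\Vert<s<r$; for $\Vert z-z_1\Vert$ small the ultrametric forces $\rho\le s$, and since $z_0+\dot{\beta}_s\in B_r(z_0)$ the hypothesis gives $\Vert a_n\Vert s^n\to 0$, hence $\sup_{n\ge 2}\Vert a_n\Vert\rho^{\,n-2}\le s^{-2}\sup_{n}\Vert a_n\Vert s^n<\infty$. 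Your attempted $\varepsilon$--$N$ argument for $\sup\to 0$ is both unnecessary and incorrect: for each fixed $n\ge 2$ one has $\Vert a_n\Vert\rho^{\,n-2}\to\Vert a_n\Vert\,\Vert z_1-z_0\Vert^{\,n-2}$ as $z\to z_1$, which is generally nonzero, so the finitely many low-$n$ terms do not vanish in the limit. Finally, the assumption $r\le 1$ is neither in the hypotheses nor needed anywhere in your estimate, since $\Vert P_n\Vert\le\rho^{\,n-2}$ holds for every $\rho>0$.
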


\begin{proof}
 Let $r>0,~z\in B_r(z_0)$ and $s\in\mathbb{R}_+^*$ such that $\Vert
 z-z_0\Vert<s<r$. Since $\sum\limits_{n\ge
   0}a_n(\dot{\beta}_s)^n$, where $\dot{\beta}_s=\dot{\alpha}_{-\log(s)}$,
 converges we have, for $w\in B_s(z_0)\subset B_r(z_0)$, that
\begin{eqnarray*}
 \lim_{n\to\infty}\Vert na_n(w-z_0)^{n-1}\Vert&=&\lim_{n\to\infty}\Vert
                                                  a_n(w-z_0)^{n-1}\Vert\\
 &\le&\lim_{n\to\infty}\Vert a_n\Vert\Vert (w-z_0)^{n-1}\Vert\\
 &\le&\lim_{n\to\infty}\Vert a_n\Vert s^{n-1}\\
 &\le&s^{-1}\lim_{n\to\infty}\Vert a_n(\dot{\beta}_s)^n\Vert\\
 &=&0
 \end{eqnarray*}
 which implies that $\sum\limits_{n\ge 1}na_n(w-z_0)^{n-1}$ converges
 uniformly on $B_s(z_0)$. Hence,
 if \[g(w)\coloneqq\frac{f(w)-f(z)-(w-z)\sum\limits_{n\ge
     1}na_n(z-z_0)^{n-1}}{\dot{\beta}_{\Vert w-z\Vert}},~\forall~w\in
   B_s(z_0)\setminus\{z\},\] then
 \begin{eqnarray*}
 \lim_{w\to z}g(w)&=&\lim_{w\to z}\lim_{m\to\infty}\sum\limits_{1\le
                      n\le
                      m}a_n\left(\frac{(w-z_0)^n-(z-z_0)^n-n(z-z_0)^{n-1}(w-z)}{\dot{\beta}_{\Vert
                      w-z\Vert}}\right)\\
 &=&\lim_{m\to\infty}\lim_{w\to z}\sum\limits_{1\le
                      n\le
                      m}a_n\left(\frac{(w-z_0)^n-(z-z_0)^n-n(z-z_0)^{n-1}(w-z)}{\dot{\beta}_{\Vert
                      w-z\Vert}}\right)\\
 &=&\lim_{m\to\infty}\sum\limits_{1\le n\le m}0\\
 &=&0.
\end{eqnarray*} 
  Thus $f'(z)=\sum\limits_{n\ge 1}na_n(z-z_0)^{n-1}$.  
\end{proof}
 
It is convenient to point out that in the last result we used the
following fact: the sequence $(\psi_m)_{m\in \mathbb{N}}$, where 
$\psi_m(w):=\sum_{1\leq n \leq m}
a_n(\frac{(w-z_0)^n-(z-z_0)^n-n(z-z_0)^{n-1}(w-z)}{\dot{\beta}_{||w-z||}}$ 
uniformly converges on $B_s(z_0)\setminus\{z\}$ to $g$, and  the change of the order of
 the limits in the second equality follows from a classical result presented in 
(\cite{RW}, Theorem 7.11), which clearly holds for function with
 values in $\overline{\mathbb{K}}$, because it is complete. 

\begin{cor}\label{holom-6}
 Let $r>0,~z_0\in\overline{\mathbb{K}}$ and $f(z)=\sum\limits_{n\ge
   0}a_n(z-z_0)^n$ for $z\in B_r(z_0)$. Then
 $f\in\mathscr{C}^\infty(B_r(z_0);\overline{\mathbb{K}})$ and for
 $k\in\mathbb{N}^*$ one has $f^{(k)}(z)=\sum\limits_{k\ge
   0}n(n-1)\dots(n-k+1)a_n(z-z_0)^{n-k}$. In particular, $k!a_k=f^{(k)}(z_0)$.
\end{cor}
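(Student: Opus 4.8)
The plan is to obtain $f \in \mathscr{C}^\infty$ and the formula for the higher derivatives by iterating Theorem \ref{holom-5}, and then to read off the coefficient formula by evaluating at $z_0$. First I would set $f_0 \coloneqq f$ and, for $k \ge 1$, define $f_k(z) \coloneqq \sum_{n \ge k} n(n-1)\cdots(n-k+1)\,a_n(z-z_0)^{n-k}$ on $B_r(z_0)$. The first task is to check that each such series actually defines an element of $\mathscr{A}\mathcal{G}(B_r(z_0))$, i.e. that it converges on $B_r(z_0)$: this is the same estimate as in the proof of Theorem \ref{holom-5}, using that for $\Vert z - z_0\Vert < s < r$ one has $\Vert n(n-1)\cdots(n-k+1) a_n (z-z_0)^{n-k}\Vert \le \Vert a_n\Vert s^{n-k} \le s^{-k}\Vert a_n(\dot\beta_s)^n\Vert \to 0$ (using Corollary \ref{norm}, since $\Vert n(n-1)\cdots(n-k+1)\Vert \le 1$ when this integer is nonzero, and the term vanishes otherwise), so by the stated completeness criterion the series converges. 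Hence $f_k \in \mathscr{A}\mathcal{G}(B_r(z_0))$ for every $k$.

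Next I would apply Theorem \ref{holom-5} to $f_k$: its term-by-term derivative is exactly $f_{k+1}$, because differentiating $n(n-1)\cdots(n-k+1) a_n (z-z_0)^{n-k}$ gives $n(n-1)\cdots(n-k+1)(n-k) a_n (z-z_0)^{n-k-1}$, and the index $n=k$ term is constant so contributes nothing. Thus $f_k' = f_{k+1}$ for all $k \ge 0$. By induction on $k$, $f$ is $k$-times differentiable with $f^{(k)} = f_k$ for every $k \in \mathbb{N}$, and since $f_k$ is itself differentiable (being analytic), each $f^{(k)}$ is in particular continuous; hence $f \in \mathscr{C}^\infty(B_r(z_0);\overline{\mathbb{K}})$. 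This gives the asserted formula $f^{(k)}(z) = \sum_{n \ge k} n(n-1)\cdots(n-k+1) a_n (z-z_0)^{n-k}$ (with the harmless typo in the summation index in the statement corrected to $n \ge k$).

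Finally, to get $k! a_k = f^{(k)}(z_0)$, I would evaluate the series for $f^{(k)}$ at $z = z_0$: every term with $n > k$ carries a factor $(z_0 - z_0)^{n-k} = 0$, so only the $n = k$ term survives, giving $f^{(k)}(z_0) = k(k-1)\cdots 1 \cdot a_k = k!\,a_k$. Here I am using that $k!$, being a nonzero natural number embedded into $\overline{\mathbb{K}}$, is the usual scalar and that $(z_0-z_0)^{m} = 0$ for $m \ge 1$. I do not anticipate a serious obstacle: the only point requiring care is the convergence bookkeeping for the $k$-th series (handled exactly as in Theorem \ref{holom-5}) and keeping track of which monomials vanish at $z_0$; everything else is a direct induction on the preceding theorem.
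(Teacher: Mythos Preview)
Your argument is correct and is exactly the intended one: the paper states this result as a corollary of Theorem~\ref{holom-5} without further proof, and your induction on $k$, together with the same convergence estimate as in that theorem and the evaluation at $z_0$, supplies precisely the omitted details.
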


\begin{thm}\label{holom-7}
 Let $f\in\mathcal{G}(\Omega)$. The following assertions hold.
 \begin{enumerate}
 \item[$i)$] If $\kappa(f)$ is analytic, then $f$ is sub-linear;
 \item[$ii)$] If $f\in\mathscr{H}\mathcal{G}(\Omega)$ and $f$ is
   sub-linear, then $\kappa(f)$ is analytic and for all
   $z_0\in\tilde{\Omega}_c$ there exists $r\in]0,1[$ such that for all
   $z\in B_r(z_0)$ one has $\kappa(f)(z)=\sum\limits_{n\ge
     0}\frac{\kappa(f)^{(n)}(z_0)}{n!}(z-z_0)^n$. Moreover,  this series
   converges uniformly in $B_r(z_0)$ and $\frac{\partial}{\partial\bar{z}}(\kappa(f))=0$.
\end{enumerate}
\end{thm}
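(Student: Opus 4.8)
The plan is to prove the two implications separately; both amount to translating between convergent power series for $\kappa(f)$ about a generalized point $z_0\in\tilde\Omega_c$ and growth estimates for the numbers $\kappa(f)^{(n)}(z_0)$. The tools are: $\kappa(f^{(n)})=(\kappa(f))^{(n)}$ and $\overline{\mathbb C}$-linearity of $\kappa$ (Theorem~\ref{embedding}); Corollary~\ref{holom-6}, which forces $n!\,a_n=\kappa(f)^{(n)}(z_0)$ whenever $\kappa(f)(z)=\sum a_n(z-z_0)^n$ holds near $z_0$; the identities $\Vert cx\Vert=\Vert x\Vert$ for $c\ne0$ and $\Vert\dot\beta_s x\Vert=s\Vert x\Vert$ of Corollary~\ref{norm}; and the criterion ``$\sum b_n$ converges $\Leftrightarrow\Vert b_n\Vert\to0$''. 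The last assertion of $ii)$ is immediate: $\frac{\partial}{\partial\bar z}(\kappa(f))=\kappa\bigl(\tfrac12(\partial_x+i\partial_y)f\bigr)=\kappa(\bar\partial f)=\kappa(0)=0$ since $f\in\mathscr{H}\mathcal{G}(\Omega)$.

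For $i)$, fix one representative $\hat f$ of $f$. For each $z_0=\cl[(x(\varphi))_\varphi]\in\tilde\Omega_c$, analyticity of $\kappa(f)$ gives $(a_n)\subset\overline{\mathbb C}$ with $\kappa(f)(z)=\sum a_n(z-z_0)^n$ on some $B_{r_0}(z_0)$, $r_0>0$, whence $a_n=f^{(n)}(z_0)/n!$ by Corollary~\ref{holom-6} and Theorem~\ref{embedding}. Testing convergence at the points $z_0+\dot\beta_s$ for $s<r_0$ gives, exactly as in the proof of Theorem~\ref{holom-5}, $\Vert f^{(n)}(z_0)\Vert\,s^{\,n}=\Vert a_n(\dot\beta_s)^n\Vert\to0$, i.e.\ $V(f^{(n)}(z_0))+n\log(1/s)\to\infty$. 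Since $f^{(n)}(z_0)=\cl[(\hat f^{(n)}(\varphi,x(\varphi)))_\varphi]$, a lower bound $V(f^{(n)}(z_0))\ge v$ turns into $|\hat f^{(n)}(\varphi_\varepsilon,x(\varphi_\varepsilon))|\le\varepsilon^{\,v-1}$ on a suitable $\mathcal A_{p_n}(\mathbb C)$ for small $\varepsilon$; feeding in these bounds (with $s$ near $r_0$) produces the sequences $C_n,p_n$ and the constant required by Definition~\ref{holom-2}, with the stated divergence. As $\hat f$ was chosen once and for all, $f$ is sub-linear.

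For $ii)$, fix $z_0\in\tilde\Omega_c$, a representative $(x(\varphi))$ valued in a compact $K\subset\subset\Omega$, and $\rho:=\tfrac12\,\mathrm{dist}(K,\mathbb C\setminus\Omega)>0$; let $\hat f$ be the representative furnished by sub-linearity. The estimates of Definition~\ref{holom-2} give $\Vert f^{(n)}(z_0)\Vert\le e^{p_n}$, hence $\bigl\Vert\tfrac{f^{(n)}(z_0)}{n!}(z-z_0)^n\bigr\Vert\le e^{p_n+n\log r}$ on $B_r(z_0)$; for $r\in\,]0,1[$ small enough (in terms of the constant of Definition~\ref{holom-2}) this tends to $0$, so $\sum_{n\ge0}\tfrac{f^{(n)}(z_0)}{n!}(z-z_0)^n$ converges uniformly on $B_r(z_0)$. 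For the identification with $\kappa(f)$, observe that $z=\cl[(z(\varphi))_\varphi]\in B_r(z_0)$ forces $|z(\varphi_\varepsilon)-x(\varphi_\varepsilon)|\le\varepsilon^{\lambda}$ for any fixed $0<\lambda<-\log r$ and small $\varepsilon$ (since $\Vert z-z_0\Vert<r<1$), so $z(\varphi_\varepsilon)$ lies in the disc $D_\varepsilon:=\{|w-x(\varphi_\varepsilon)|<\rho\}\subset\Omega$. Apply the Cauchy--Pompeiu formula to $\hat f_{\varphi_\varepsilon}$ on $D_\varepsilon$: because $\bar\partial\hat f\in\mathcal N(\Omega)$, its area term is $O(\varepsilon^{\gamma(q)-N})$ on $D_\varepsilon$ for every $q$, hence negligible; expanding $\tfrac1{w-z}=\sum_{n\ge0}\tfrac{(z-x(\varphi_\varepsilon))^n}{(w-x(\varphi_\varepsilon))^{n+1}}$, truncating at $m$, and using the standard bound on the $m$-th Taylor remainder of $\hat f_{\varphi_\varepsilon}$ on $D_\varepsilon$, namely $\le C_m\,|z(\varphi_\varepsilon)-x(\varphi_\varepsilon)|^{m+1}\sup_{|w-x(\varphi_\varepsilon)|=\rho}|\hat f_{\varphi_\varepsilon}(w)|$ with $C_m$ independent of $\varepsilon$, together with $\sup_{|w-x(\varphi_\varepsilon)|=\rho}|\hat f_{\varphi_\varepsilon}(w)|\le c\varepsilon^{-N}$ (moderateness), one obtains $V\bigl(\kappa(f)(z)-\sum_{0\le n\le m}b_n(z-z_0)^n\bigr)\ge(m+1)\lambda-N$, where $b_n$ is the generalized number represented by the contour integrals $\tfrac1{2\pi i}\int_{|w-x(\varphi_\varepsilon)|=\rho}\hat f_{\varphi_\varepsilon}(w)(w-x(\varphi_\varepsilon))^{-n-1}\,dw$. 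Letting $m\to\infty$ — the bound being uniform in $z\in B_r(z_0)$ — gives $\kappa(f)(z)=\sum_{n\ge0}b_n(z-z_0)^n$, so $\kappa(f)$ is analytic; and $n!\,b_n=\kappa(f)^{(n)}(z_0)=f^{(n)}(z_0)$ by Corollary~\ref{holom-6}, which is the displayed expansion.

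The main obstacle is the uniformity bookkeeping in $ii)$: the compact $K$, the radius $\rho$, and hence the disc $D_\varepsilon$ and the truncation constant $C_m$, must be kept independent of $\varphi$ and $\varepsilon$ even though every single bound is $\varepsilon$-wise, and one must verify that the correction terms (the $\bar\partial$-area integral, and any difference between contour integrals and derivative values) genuinely lie in $\mathcal N(\mathbb C)$ and, being finitely many for each $m$, do not accumulate as $m\to\infty$. A technically cleaner route, which I would adopt in the final write-up, is to first replace $\hat f$ by a representative whose components are honestly holomorphic on a fixed relatively compact neighbourhood of $K$ — obtained by solving $\bar\partial u=\bar\partial\hat f$ with $\mathcal N$-estimates on the negligible datum $\bar\partial\hat f$ and subtracting — so that the classical Cauchy integral formula and Cauchy estimates apply verbatim; in that form the Cauchy estimates already bound $\Vert f^{(n)}(z_0)\Vert$ uniformly in $n$, so the convergence step would not even need sub-linearity.
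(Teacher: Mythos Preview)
Your proof of part $i)$ matches the paper's almost exactly: test convergence at $z_0+\dot\alpha_r$ (equivalently your $z_0+\dot\beta_s$), read off $V(a_n)+nr\to\infty$, and translate this into the growth sequences of Definition~\ref{holom-2}. Nothing to add there.

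Part $ii)$ is where the two arguments diverge. The paper does not work with a general representative and the Cauchy--Pompeiu formula; instead it quotes an external structural result (\cite{JA1}, Theorem~2): for each member $\mathring{K}_\nu$ of a strictly pseudoconvex exhaustion of $\Omega$ there is a representative $(f_{\nu\varphi})_\varphi$ of $f$ with $f_{\nu\varphi_\varepsilon}\in\mathscr{H}(\mathring{K}_\nu)$ for every $\varepsilon$. With a genuinely holomorphic representative in hand, the classical Taylor expansion holds pointwise on a fixed disc $\bar D_s(z_{0\varphi_\varepsilon})$, and one only has to observe that $z\in B_r(z_0)\subset B_1(z_0)$ forces $z(\varphi_\varepsilon)-z_0(\varphi_\varepsilon)\to 0$, placing $z(\varphi_\varepsilon)$ in that disc for small $\varepsilon$. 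Your alternative route in the final paragraph---solve $\bar\partial u=\bar\partial\hat f$ with $\mathcal N$-estimates and subtract---is precisely the mechanism underlying \cite{JA1}, so you have essentially rediscovered the paper's approach there.

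Your primary route via Cauchy--Pompeiu on an arbitrary representative is more self-contained and does work: the area term is a single null correction independent of $m$ (so your worry about accumulation as $m\to\infty$ is unfounded), and the geometric remainder bound $|R_m|\le C_m\varepsilon^{(m+1)\lambda-N}$ gives uniform convergence on $B_r(z_0)$ once $\lambda\in(0,-\log r)$ is fixed. The trade-off is exactly what you identified: more bookkeeping versus citing a black box. One small point to clean up: your convergence estimate $\Vert f^{(n)}(z_0)\Vert\le e^{p_n}$ follows the sign in Definition~\ref{holom-2} as written, but the condition $p_n+kn\to\infty$ then does not by itself force $e^{p_n+n\log r}\to 0$ for small $r$ (take $p_n=n^2$, $k=0$). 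The paper's own proof silently uses the opposite sign convention $|\hat f^{(n)}|\le c_n\varepsilon^{p_n}$, under which the argument goes through for $r<e^{-|k|}$; you should adopt that reading.
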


\begin{proof}
 $i)$ Let $z_0\in\tilde{\Omega}_c$ and suppose that
 $\kappa(f)(z)=\sum\limits_{n\ge 0}a_n(z-z_0)^n,~\forall~z\in B_R(z_0)$
 with $R\in\mathbb{R}_+^*$, where  by Corollary \ref{holom-6},  
we have that $n!a_n=\kappa(f)^{(n)}(z_0)$. Let $r\in]0,1[$ be such that
 $\Vert\dot{\alpha}_r\Vert=e^{-r}<R$ and define
 $z=z_0+\dot{\alpha}_r$. Since $\Vert
 z-z_0\Vert=\Vert\dot{\alpha}_r\Vert=e^{-r}<R$ one has that the
 series \[\sum\limits_{n\ge 0}a_n(z-z_0)^n=\sum\limits_{n\ge
   0}(\dot{\alpha}_r)^n\] converges and from Corollary \ref{norm} item
 $iv)$, we have that \[0=\lim_{n\to\infty}\Vert
 a_n(\dot{\alpha}_r)^n\Vert=\lim_{n\to\infty}e^{-(V(a_n)+nr)}.\]
 Let $(f_\varphi(\cdot))_\varphi$ be a representative of $f$ and
 $(z_{0\varphi})_\varphi$ one of $z_0$. Take $k=r,~c_n=1$ and
 \begin{equation*}
 p_n=\left\{\begin{array}{l}
 V(a_n)-1,~\mbox{if}~V(a_n)\in\mathbb{R};\\
 n, ~\mbox{if}~V(a_n)=\infty ~(\mbox{i.e.},~a_n\in\mathcal{N}(\mathbb{K})).
 \end{array}\right.
 \end{equation*} 
 Then $\lim\limits_{n\to\infty}(p_n+nk)=\infty$ and it follows that \[\vert
 f_{\varphi_\varepsilon}^{(n)}(z_{0\varphi_\varepsilon})\vert=n!\vert
 a_n\vert\le c_n\varepsilon^{p_n}\] for $\varepsilon$ sufficiently
 small.

 $ii)$ Let $(K_\nu)_\nu$ be an exhaustive sequence of compact subsets of
 $\Omega$ such that $\mathring{K}_\nu$ is a
 $\mathscr{C}^\infty$-strictly pseudoconvex domain for each
 $\nu\in\mathbb{N}$, see (\cite{HL}, Corollaries 1.5.6 and 
 1.5.11). From (\cite{JA1}, Theorem 2)  there exists a representative
 $(f_{\nu\varphi}(\cdot))_\varphi$ of $f$ such that
 $f_{\nu\varphi_\varepsilon}(\cdot)\in\mathscr{H}(\mathring{K}_\nu)$
 for $\varepsilon\in I$ and $\nu\in\mathbb{N}$. Take
 $z_0\in\tilde{\Omega}_c$. Since $f$ is sublinear then there exists
 $(\eta_n)_{n\in\mathbb{N}}$ a sequence in $I, k\in\mathbb{R},
 (z_{0\varphi})_\varphi$ a representative of $z_0$, and the sequences
 in $\mathbb{R}: (c_n)_{n\in\mathbb{N}},
 ~\mbox{and}~(p_n)_{n\in\mathbb{N}}$, such that
 $\lim\limits_{n\to\infty}(p_n+k_n)=\infty$ with \[\vert
 f_{\varphi_\varepsilon}^{(n)}(z_{0\varphi_\varepsilon})\vert\le
 c_n\varepsilon^{p_n},~\forall~\varepsilon\in I_{\eta_n},
 n\in\mathbb{N}.\] Thus, if $0<r<\Vert\dot{\alpha}_{\vert k\vert}\Vert=e^{-\vert
   k\vert}$, then  \[\lim_{n\to\infty}\left\Vert\frac{\kappa(f)^{(n)}}{n!}(z-z_0)^n\right
 \Vert\le\lim_{n\to\infty}e^{-(p_n+k_n)}=0,~\forall~z\in B_r(z_0).\]
 Hence, \[\sum\limits_{n\ge
   0}\frac{\kappa(f)^{(n)}}{n!}(z-z_0)^n\] converges uniformly in
 $B_r(z_0)$. Let $\nu\in\mathbb{N}$ and $s\in\mathbb{R}_+^*$ such that
 $z_{0\varphi_\varepsilon}\in K_\nu\subset \mathring{K}_{\nu+1}$ and
 define \[\bar{D}_s(z_{0\varphi_\varepsilon})\coloneqq\{\lambda\in\mathbb{C}:
 \vert\lambda-z_{0\varphi_\varepsilon}\vert\le
 s\}\subset\mathring{K}_{\nu+1},~\forall~\varepsilon\in I.\]
 By the fact all  the elements of $B_1$ are associated to zero
 we have that  \[f_{\nu\varphi_\varepsilon}(x)=\sum\limits_{n\ge
   0}\frac{f_{\nu\varphi_\varepsilon}^{(n)}(z_{0\varphi_\varepsilon})}{n!}(x-z_{0\varphi_\varepsilon})^n,~\forall
 ~x\in\bar{D}(z_{0\varphi_\varepsilon})\] and $\varepsilon$
 sufficiently small. Consequently, we conclude
 that \[\kappa(f)(z)=\sum\limits_{n\ge
   0}\frac{\kappa(f)^{(n)}(z_0)}{n!}(z-z_0)^n,~\forall~z\in B_r(z_0).\]
 Moreover, as \[\frac{\partial}{\partial\bar{z}}(\kappa(f))=\kappa\left(\frac{\partial}{\partial\bar{z}}f\right)=0,\]
 we have that $\frac{\partial}{\partial\bar{z}}(\kappa(f))=0$.
 \end{proof}

Note that in the proof of the above theorem we actually obtain a lower
bound for the radius of convergence in each point.
 
We finish this section with the following result that 
is an easy consequence of the last theorem. 

\begin{cor}\label{holom-8}
Let $f\in\mathscr{H}\mathcal{G}(\Omega)$. Then $\kappa(f)$ is analytic
if and only if  $f$ is sub-linear. 
\end{cor}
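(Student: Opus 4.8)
The plan is to read off Corollary~\ref{holom-8} as a direct corollary of Theorem~\ref{holom-7}, since the two implications it asserts are exactly items $i)$ and $ii)$ of that theorem, specialized to $f\in\mathscr{H}\mathcal{G}(\Omega)$. First I would record the trivial implication: if $\kappa(f)$ is analytic, then by Theorem~\ref{holom-7} item $i)$ (which needs no holomorphy hypothesis on $f$), $f$ is sub-linear. This direction is immediate and needs no extra argument.

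For the converse, suppose $f\in\mathscr{H}\mathcal{G}(\Omega)$ and $f$ is sub-linear. Then Theorem~\ref{holom-7} item $ii)$ applies verbatim: it produces, for each $z_0\in\tilde{\Omega}_c$, a radius $r\in\,]0,1[$ and a power series expansion $\kappa(f)(z)=\sum_{n\ge 0}\frac{\kappa(f)^{(n)}(z_0)}{n!}(z-z_0)^n$ valid and uniformly convergent on $B_r(z_0)$. By Definition~\ref{holom-4}, exhibiting such a convergent power series representation in a neighborhood of each point of the domain is precisely what it means for $\kappa(f)$ to be analytic (with $a_n=\kappa(f)^{(n)}(z_0)/n!\in\overline{\mathbb{K}}$). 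Hence $\kappa(f)\in\mathscr{A}\mathcal{G}(\tilde{\Omega}_c)$.

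I do not anticipate a genuine obstacle here, since all the work has been front-loaded into Theorem~\ref{holom-7}; the only point requiring a word of care is purely bookkeeping: checking that the neighborhood-by-neighborhood expansion furnished by item $ii)$ matches the pointwise definition of analyticity in Definition~\ref{holom-4}, and that the coefficients indeed lie in $\overline{\mathbb{K}}$ (which they do, being $\kappa(f)^{(n)}(z_0)/n!$, a generalized pointvalue divided by an invertible integer). So the proof reduces to the single sentence that the statement is the conjunction of Theorem~\ref{holom-7} items $i)$ and $ii)$.

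\begin{proof}
If $\kappa(f)$ is analytic, then by Theorem~\ref{holom-7} item $i)$ we have that $f$ is sub-linear. Conversely, if $f\in\mathscr{H}\mathcal{G}(\Omega)$ and $f$ is sub-linear, then by Theorem~\ref{holom-7} item $ii)$, for each $z_0\in\tilde{\Omega}_c$ there exists $r\in\,]0,1[$ such that $\kappa(f)(z)=\sum_{n\ge 0}\frac{\kappa(f)^{(n)}(z_0)}{n!}(z-z_0)^n$ for all $z\in B_r(z_0)$, this series being uniformly convergent on $B_r(z_0)$. Since $\frac{\kappa(f)^{(n)}(z_0)}{n!}\in\overline{\mathbb{K}}$ for each $n\in\mathbb{N}$, this is exactly the statement that $\kappa(f)$ is analytic at $z_0$ in the sense of Definition~\ref{holom-4}. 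As $z_0\in\tilde{\Omega}_c$ was arbitrary, $\kappa(f)$ is analytic.
\end{proof}
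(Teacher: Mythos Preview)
Your proposal is correct and follows exactly the approach the paper intends: the corollary is stated there as an immediate consequence of Theorem~\ref{holom-7}, and your proof simply unpacks items $i)$ and $ii)$ of that theorem, which is all that is required.
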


\subsection{Some applications}
 In \cite{AS} the authors used (\cite{JO}, Proposition $2.5$) 
 to show  the non existence of a solution for a
 certain first-order partial differential equation. In this article 
 the following question is still raised: The result of no solution 
 existence for such an equation can be generalized to any linear
 operator with constant coefficients? In \cite{OJR} generalizes and
 gives an answer to this question, making use of the Open mapping 
 Theorem in a simple, but interesting way into
 $\overline{\mathbb{K}}_s$. In this paper we extend this result to 
 $\overline{\mathbb{K}}$ using the results of simplified generalized 
 holomorphic functions of \cite{OJR}, that have been extended here for 
 the case of Colombeau's full algebras.

 \begin{thm}\label{aplica-1}
 Let $\Omega$ be a connected open subset of
 $\mathbb{C}^n,~f\in\mathscr{H}(\Omega)$ non-constant,
 $L=\sum\limits_{1\le k\le m}a_k\frac{\partial}{\partial z_k}$ a linear
 differential operator with constant coefficients $a_1,a_2,\dots,a_m$
 belonging to $\overline{\mathbb{K}}$ and $\mathfrak{J}$ the ideal
 generated by $\{a_1,a_2,\dots, a_m\}$. If there exists
 $u\in\mathcal{G}(\Omega)$ such that $L(u)=f$, then
 $\mathfrak{J}=\overline{\mathbb{K}}$.
 \end{thm}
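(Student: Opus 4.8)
The plan is to transport the equation $L(u)=f$ through the embedding $\kappa$ of Theorem~\ref{embedding}, observe that the image of $\kappa(f)$ is then a nonempty \emph{open} subset of $\overline{\mathbb{K}}$ contained in the ideal $\mathfrak{J}$, and finally exploit that every ball around $0$ in the sharp topology contains an invertible element. (Throughout, $\mathbb{K}=\mathbb{C}$.)

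First I would push the equation forward. Since $\kappa$ is a homomorphism of $\overline{\mathbb{K}}$-algebras satisfying $\kappa(\partial f/\partial x_j)=\partial(\kappa(f))/\partial x_j$ for all $j$ (Theorem~\ref{embedding}), it commutes, by $\overline{\mathbb{K}}$-linearity, with each Wirtinger operator $\partial/\partial z_k=\tfrac12(\partial_{x_k}-i\partial_{y_k})$. As the coefficients $a_k$ are constants in $\overline{\mathbb{K}}$, applying $\kappa$ to $L(u)=f$ gives $\kappa(f)=\sum_{k=1}^{m}a_k\,\partial(\kappa(u))/\partial z_k$ in $\mathscr{C}^1(\tilde{\Omega}_c;\overline{\mathbb{K}})$. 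Evaluating at any $x\in\tilde{\Omega}_c$, the value $\kappa(f)(x)=\sum_{k=1}^{m}a_k\big((\partial(\kappa(u))/\partial z_k)(x)\big)$ is a $\overline{\mathbb{K}}$-linear combination of $a_1,\dots,a_m$, hence lies in $\mathfrak{J}$; thus $\kappa(f)(\tilde{\Omega}_c)\subseteq\mathfrak{J}$.

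Next, since $f\in\mathscr{H}(\Omega)$ is non-constant and $\Omega$ is connected, I would invoke Theorem~\ref{holom-1} with $W=\Omega$ to conclude that $\kappa(f)(\tilde{\Omega}_c)$ is open, and it is nonempty since $\tilde{\Omega}_c\ne\emptyset$. Picking $y_0\in\kappa(f)(\tilde{\Omega}_c)$ and $r>0$ with $B_r(y_0)\subseteq\kappa(f)(\tilde{\Omega}_c)\subseteq\mathfrak{J}$, translation invariance of the ultra-metric $D$ together with $y_0\in\mathfrak{J}$ gives $B_r(0)=B_r(y_0)-y_0\subseteq\mathfrak{J}$. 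Then I would recall that $\dot{\alpha}_s\in\inv(\overline{\mathbb{K}})$ for every $s\in\mathbb{R}$ and $\Vert\dot{\alpha}_s\Vert=e^{-s}$ (Corollary~\ref{norm}, items $iv)$ and $v)$), choose $s$ with $e^{-s}<r$ so that $\dot{\alpha}_s\in B_r(0)\subseteq\mathfrak{J}$, and conclude from invertibility that $1=\dot{\alpha}_{-s}\dot{\alpha}_s\in\mathfrak{J}$, i.e.\ $\mathfrak{J}=\overline{\mathbb{K}}$.

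The one point that deserves care is this last topological move: a proper ideal of $\overline{\mathbb{K}}$ cannot have nonempty interior. This is \emph{not} an instance of the familiar ``an open subgroup is closed'' argument, because $\overline{\mathbb{K}}$ is totally disconnected; what makes it work is that, unlike in a classical local ring, every neighbourhood of $0$ in the sharp topology still meets $\inv(\overline{\mathbb{K}})$, via the elements $\dot{\alpha}_s$ with $s$ large. Everything else is formal once the Embedding Theorem and the compatibility of $\kappa$ with the Wirtinger operators are in hand.
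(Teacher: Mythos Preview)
Your argument is correct and follows the same strategy as the paper: show $\Ima(\kappa(f))\subset\mathfrak{J}$ via the Embedding Theorem, then use the Open Mapping Theorem (Theorem~\ref{holom-1}) to conclude that $\mathfrak{J}$ has nonempty interior and hence equals $\overline{\mathbb{K}}$. The only difference is in the last step: the paper invokes \cite[Proposition~2.6~$i)$]{JRJ} to say that a proper ideal of $\overline{\mathbb{K}}$ is rare (nowhere dense), whereas you prove the needed fact directly by observing that any ball $B_r(0)$ contains an invertible $\dot{\alpha}_s$; your version is more self-contained but otherwise identical in spirit.
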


 \begin{proof}
 Suppose $\mathfrak{J}$ is an ideal of $\overline{\mathbb{K}}$. If
 there exists $u\in\mathcal{G}(\Omega)$ such that $L(u)=f $, then
 $\Ima(\kappa(f))\subset\mathfrak{J}$. By 
 (\cite{JRJ}, Proposition 2.6 item $i)$), we have that $\mathfrak{J}$ is a rare subset of 
 $\overline{\mathbb{K}}$. So, $\Ima(\kappa(f))$ would not 
 be open which contradicts the Theorem 
 \ref{holom-1}.
 \end{proof}

 \begin{prop}
 Let $A\in\mathcal{S}_f$ and $L$ the differential operator defined by
 $L=(\chi_AD^2+\chi_{A^c}\id)$. Then the solutions of $L(u)=0$ are all
 of the form $\chi_Af$ with $D^2f\neq 0$ and 
 $\chi_A\lambda^2+\chi_{A^c}=0,~\forall~\lambda\in\overline{\mathbb{K}}.$
 \end{prop}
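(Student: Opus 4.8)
The plan is to read the statement as an exercise in orthogonal--idempotent algebra inside $\mathcal{G}(\Omega)$, using two facts: $\chi_A$ and $\chi_{A^{c}}:=1-\chi_A$ are a complementary pair of idempotents of $\overline{\mathbb{K}}$ ($\chi_A^{2}=\chi_A$, $\chi_{A^{c}}^{2}=\chi_{A^{c}}$, $\chi_A\chi_{A^{c}}=0$, $\chi_A+\chi_{A^{c}}=1$), and, being elements of the ring of constants $\overline{\mathbb{K}}$ of $\mathcal{G}(\Omega)$, they commute past the derivation, so $D(\chi_A v)=\chi_A D(v)$ and $D^{2}(\chi_A v)=\chi_A D^{2}(v)$ for every $v$, by Proposition~\ref{deriv-6}~$c)$.

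First I would take $u\in\mathcal{G}(\Omega)$ with $L(u)=\chi_A D^{2}u+\chi_{A^{c}}u=0$ and multiply this relation in turn by $\chi_{A^{c}}$ and by $\chi_A$. Multiplying by $\chi_{A^{c}}$ annihilates the first term and leaves $\chi_{A^{c}}u=0$, whence $u=(\chi_A+\chi_{A^{c}})u=\chi_A u$; so $u$ has the asserted form $u=\chi_A f$ (with $f:=u$, or with any $f$ such that $\chi_A f=u$). Multiplying by $\chi_A$ gives $\chi_A D^{2}u=0$. Conversely, if $u=\chi_A f$ with $\chi_A D^{2}f=0$, then $L(u)=\chi_A D^{2}(\chi_A f)+\chi_{A^{c}}\chi_A f=\chi_A^{2}D^{2}f+0=\chi_A D^{2}f=0$; hence the solutions are precisely the $u=\chi_A f$ with $\chi_A D^{2}f=0$. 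I would stress that this constraint does \emph{not} impose $D^{2}f=0$: since $f$ is only pinned down modulo an added term $\chi_{A^{c}}h$ (which leaves $u=\chi_A f$ unchanged), one may pick $h$ a quadratic and obtain a representative with $D^{2}f=\chi_{A^{c}}D^{2}h\neq 0$, provided $\chi_{A^{c}}\neq 0$.

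For the last clause --- which I read as the assertion that $\chi_A\lambda^{2}+\chi_{A^{c}}\neq 0$ for every $\lambda\in\overline{\mathbb{K}}$ --- I would argue by contradiction: if $\chi_A\lambda^{2}+\chi_{A^{c}}=0$, multiplying by $\chi_{A^{c}}$ and using $\chi_{A^{c}}\chi_A=0$, $\chi_{A^{c}}^{2}=\chi_{A^{c}}$ forces $\chi_{A^{c}}=0$, impossible for a proper idempotent. So the ``characteristic equation'' of $L$ has no root in $\overline{\mathbb{K}}$, which is what excludes exponential-type solutions and confines the solution set to the family found above. The two degenerate idempotents are disposed of directly: $\chi_A=0$ reduces $L$ to $\id$ (only $u=0$), while $\chi_A=1$ reduces $L$ to $D^{2}$ (the equation $D^{2}u=0$, which has non-constant solutions by Example~\ref{deriv-5}).

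There is no real obstacle: everything is multiplication of $L(u)=0$ by the two orthogonal idempotents plus the constant-coefficient differentiation rule. The care needed is bookkeeping only --- confirming that every $A\in\mathcal{S}_f$ produces a genuine constant idempotent $\chi_A\in\overline{\mathbb{K}}$ (so $D$ passes through it and the identities descend from $\mathcal{E}_M(\Omega)$ to $\mathcal{G}(\Omega)=\mathcal{E}_M(\Omega)/\mathcal{N}(\Omega)$), tracking the non-uniqueness of $f$ in $u=\chi_A f$ when interpreting ``$D^{2}f\neq 0$'', and recording that the final claim is non-vacuous precisely because $A$ is taken proper, i.e. $\chi_{A^{c}}\neq 0$. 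Thus the only substantive point is to make precise what $\mathcal{S}_f$ is and to phrase the conclusion faithfully to these facts.
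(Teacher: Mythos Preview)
Your argument is essentially the paper's: multiply $L(u)=0$ by the two orthogonal idempotents $\chi_A,\chi_{A^{c}}$ to split it into $\chi_A D^{2}u=0$ and $\chi_{A^{c}}u=0$, conclude $u=\chi_A u$, and handle the $\lambda$-clause by the same multiplication-by-$\chi_{A^{c}}$ contradiction. The paper's proof does precisely this (multiplying first by $\chi_A$ rather than $\chi_{A^{c}}$, then reading off the other piece from the original equation), so the strategies coincide.

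One remark on your reading of the statement. You correctly spotted that the printed ``$\chi_A\lambda^{2}+\chi_{A^{c}}=0$'' must mean ``$\neq 0$'', and the paper's proof confirms this. But your attempt to rescue ``$D^{2}f\neq 0$'' via the non-uniqueness of $f$ in $u=\chi_A f$ is more ingenuity than the situation calls for: the paper's own proof takes $f$ to be the solution itself (so $f=\chi_A f$) and derives $D^{2}f=0$ outright, from $D^{2}f=D^{2}(\chi_A f)=\chi_A D^{2}f=0$. So ``$D^{2}f\neq 0$'' in the statement is simply a typo for ``$D^{2}f=0$'', and your extra paragraph on choosing a quadratic $h$ can be dropped.
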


 \begin{proof}
 If $D^2f=0$, then we have that  $L(\chi_Af)=0$. Now, 
 let $f$ be such that $L(f)=0$. Since $\chi_AD^2f+\chi_{A^c}f=0$,  then we 
 have that $\chi_A(\chi_AD^2f+\chi_{A^c}f)=0$. Thus, $\chi_AD^2f=0$ 
 and $\chi_{A^c}f=0$. By the fact that  $1=\chi_A+\chi_{A^c}$ we have  that 
 $f=\chi_Af$ and $D^2f=0$. Note that if $\chi_A\lambda^2+\chi_{A^c}=0$, 
 then $\chi_{A^c}(\chi_A\lambda^2+\chi_{A^c})=0$ which implies that 
 $\chi_{A^c}=0$, this contradicts the fact that $\chi_{A^c}\ne 0$.
 So, $\chi_A\lambda^2+\chi_{A^c}\ne 0,~\forall ~\lambda\in\overline{\mathbb{K}}$. 
 \end{proof}


\end{document}